\journal{}
\theoremstyle{plain}
  \newtheorem{thm}{Theorem}[section]
  \newtheorem{lem}[thm]{Lemma}
  \newtheorem{prop}[thm]{Proposition}
  \newtheorem{cor}[thm]{Corollary}
\theoremstyle{definition}
  \newtheorem{defn}[thm]{Definition}
  \newtheorem{exmp}[thm]{Example}
  \newtheorem{rem}[thm]{Remark}
\def\rto{{\bfig\morphism<180,0>[\mkern-4mu`\mkern-4mu;]\place(82,0)[\mapstochar]\efig}}
\def\ps@pprintTitle{%
 \let\@oddhead\@empty
 \let\@evenhead\@empty
 \def\@oddfoot{\centerline{\thepage}}%
 \let\@evenfoot\@oddfoot}
\newcommand{\lda}{\swarrow}
\newcommand{\rda}{\searrow}
\newcommand{\Lra}{\Longrightarrow}
\newcommand{\bv}{\bigvee}
\newcommand{\bw}{\bigwedge}
\newcommand{\dv}{\dashv}
\newcommand{\lam}{\lambda}
\newcommand{\si}{\sigma}
\renewcommand{\phi}{\varphi}
\newcommand{\QRel}{\CQ\text{-}\Rel}
\newcommand{\phila}{\overleftarrow{\phi}}
\newcommand{\CC}{\mathcal{C}}
\newcommand{\CP}{\mathcal{P}}
\newcommand{\CQ}{\mathcal{Q}}
\newcommand{\CR}{\mathcal{R}}
\newcommand{\Fx}{\mathfrak{x}}
\newcommand{\Fy}{\mathfrak{y}}
\newcommand{\Fz}{\mathfrak{z}}
\newcommand{\Fa}{\mathfrak{a}}
\newcommand{\Fb}{\mathfrak{b}}
\newcommand{\Fw}{\mathfrak{w}}
\newcommand{\FZ}{\mathfrak{Z}}
\newcommand{\sD}{{\sf D}}
\newcommand{\sP}{{\sf P}}
\newcommand{\sV}{{\sf V}}
\newcommand{\sfs}{{\sf s}}
\newcommand{\sy}{{\sf y}}
\newcommand{\bbP}{\mathbb{P}}
\newcommand{\bbT}{\mathbb{T}}
\newcommand{\Cat}{{\bf Cat}}
\newcommand{\Rel}{{\bf Rel}}
\newcommand{\Set}{{\bf Set}}
\newcommand{\VCat}{{\sf V}\text{-}\Cat}
\newcommand{\QCat}{\CQ\text{-}\Cat}
\newcommand{\op}{{\rm op}}
\newcommand{\hT}{\hat{T}}
\begin{document}

\begin{frontmatter}



\title{Lax Distributive Laws for Topology, I}


\author{Walter Tholen\fnref{A}}
\ead{tholen@mathstat.yorku.ca}

\address{Department of Mathematics and Statistics, York University, Toronto, Ontario, Canada, M3J 1P3}
\address{\rm Dedicated to the memory of Bob Walters}

\fntext[A]{Partial financial assistance by the Natural Sciences and Engineering Research Council (NSERC) of Canada is gratefully acknowledged.}

\begin{abstract}
For a quantaloid $\CQ$, considered as a bicategory, Walters introduced categories enriched in $\CQ$. Here we extend the study of monad-quantale-enriched categories of the past fifteen years by introducing monad-quantaloid-enriched categories. We do so by making lax distributive laws of a monad $\mathbb T$ over the discrete presheaf monad of the small quantaloid $\CQ$ the primary data of the theory, rather than the lax monad extensions of $\mathbb T$ to the category of $\CQ$-relations that they equivalently describe. The central piece of the paper establishes a Galois correspondence between such lax distributive laws and lax Eilenberg-Moore ${\mathbb T}$-algebra structures on the set of discrete presheaves over the object set of $\CQ$. We give a precise comparison of these structures with the more restrictive notion introduced by Hofmann in the case of a commutative quantale, called natural topological theories here, and describe the lax monad extensions introduced by him as minimal. Throughout the paper, a variety of old and new examples of ordered, metric and topological structures illustrate the theory developed, which includes the consideration of algebraic functors and change-of-base functors in full generality.
\end{abstract}

\begin{keyword}
quantaloid \sep quantale \sep monad \sep discrete presheaf monad  \sep lax distributive law \sep lax $\lambda$-algebra \sep lax monad extension \sep monad-quantaloid-enriched category \sep topological theory \sep natural topological theory \sep algebraic functor \sep change-of-base functor.


\MSC[2010] 	 18 C15, 18C20, 18D99.

\end{keyword}

\end{frontmatter}


\section{Introduction}
For monads $\mathbb S$ and $\mathbb T$ on a category $\CC$, liftings of $\mathbb S$ along the forgetful functor
$\CC^{\mathbb T}\to\CC$ of the Eilenberg-Moore category of $\mathbb T$, or extensions of $\mathbb T$ along the insertion functor $\CC\to\CC_{\mathbb S}$ to the Kleisli category of $\mathbb S$, correspond precisely to Beck's \cite{Beck} {\em distributive laws} $\lam:TS\to ST$ of $\mathbb T$ over $\mathbb S$; see \cite{BarrWells} and II.3 of \cite{MonTop} for a compact account of these correspondences. For $\CC={\bf Set}, {\mathbb T}={\mathbb L}$ the free monoid (or {\em list}) monad, and $\mathbb S$ the free Abelian group monad, their algebraic prototype interpretes the left-hand terms of the equations
$$x(y+z)=xy+xz\quad\text{      and      }\quad(x+y)z=xz+yz$$
as elements of the free monoid $LSX$ over (the underlying set of) the free Abelian group $SX$ over some alphabet $X$ and assigns to them the right-hand terms in $SLX$, to then obtain the category of (unital) rings as the Eilenberg-Moore algebras of a composite monad
$\mathbb{SL}$ as facilitated by $\lam$. Similarly, keeping ${\mathbb T}={\mathbb L}$ but letting now ${\mathbb S}={\mathbb P}$ be the power set monad, the distributive law
$$\lam_X:L\sP X\to\sP LX,\quad(A_1,...,A_n)\mapsto A_1\times...\times A_n,$$
produces a composite monad whose Eilenberg-Moore category is the category of {\em quantales}, {\em i.e.}, of the monoid objects in the monoidal-closed category {\bf Sup} of sup-lattices (see \cite{JoyalTierney, Rosenthal}), characterized as the complete lattices with a monoid structure whose multiplication distributes over arbitrary suprema in each variable.
Ever since the appearance of Beck's original work, distributive laws have been, and continue to be, studied from a predominantly algebraic perspective, at many levels of generality; see, for example, \cite{Street, LackStreet, Hermida, Bohm}. But what is their role in topology, if any?

As a unification of the settings used by Lawvere \cite{Lawvere} and by Manes \cite{Manes} and Barr \cite{Barr} for their respective descriptions of metric spaces and topological spaces, the viewpoint of {\em Monoidal Topology} \cite{CleHof2003, ClementinoTholen2003, CleHofTho, Seal, Hofmann, MonTop} has been that some key categories of analysis and topology are described as categories of lax
 $({\mathbb T},\sV)$-algebras, also called $({\mathbb T},\sV)$-{\em categories}, where $\sV$ is a quantale and $\mathbb T$ a $\Set$-monad with a lax extension to the category $\sV\text{-}{\bf Rel}$ of sets and $\sV$-valued relations (or {\em matrices} \cite{BCSW}) as morphisms.
 For example,  for $\sV={\sf 2}$ the two-element chain and for ${\mathbb T}={\mathbb U}$ the ultrafilter monad with its lax Barr extension to relations, one obtains the Manes-Barr presentation of topological spaces in terms of ultrafilter
 convergence (with just two axioms that generalize reflexivity and transitivity of ordered sets). With the same monad, but now with $\sV=[0,\infty]$ being Lawvere's extended real half-line and addition playing the role of the tensor product, one obtains Lowen's \cite{Lowen} category of {\em approach spaces}, which incorporates both Barr's {\bf Top} and Lawvere's {\bf Met} in a satisfactory manner. Perhaps one of the best successes of the subject so far has been the strictly equational characterization of exponential objects in the lax setting of $({\mathbb T},\sV)$-categories. For the extensive literature on the subject, we must refer the reader to the literature list in \cite{MonTop}, in particular the Notes to Chapters III and IV of \cite{MonTop}, which also list many
 important related approaches, such as that of Burroni \cite{Burroni} (which drew Lambek's \cite{Lambek} multicategories into the setting) and the thesis of M\"{o}bus \cite{Mobus} (which, beyond compactness and Hausdorff separation, explored a wide range of topological concepts in the relational monadic setting).

 For the general $({\mathbb T},\sV)$-setting, it  had been realized early on that $\sV\text{-}{\bf Rel}$ is precisely the Kleisli category of the $\sV$-power set monad
  ${\mathbb P}_{\sV}$ (with $\sP_{\sV}X=\sV^X$), and it was therefore plausible that {\em lax} extensions $\hat{T}$ of $\mathbb T$ to $\sV\text{-}{\bf Rel}$
  correspond to  {\em monotone lax} distributive laws of $\mathbb T$ over ${\mathbb P}_{\sV}$
  (see \cite{Schubert} and Exercise III.1.I of \cite{MonTop}). Our initial point in this paper is to underline the role of lax distributive laws as the {\em primary} data in the study of topological categories, rather than as some secondary data derived from lax monad extensions, the establishment of which can be tedious (see \cite{CleHof2003,Seal}). In fact, in analyzing
step by step the correspondence between the two entities (as we will do in Section 6 of this paper), we see that lax distributive laws minimize the number of variables in, and often the computational effort for, checking the required inequalities. It is therefore consequential that here we express
$({\mathbb T},\sV)$-categories directly as {\em lax} $\lam$-{\em algebras}, without prior reference to the lax monad
extension which the ambient lax distributive law $\lam$ corresponds to. Thus, their axioms are entirely expressed in terms of maps, rather than $\sV$-relations, and of the two $\Set$-monads at play, $\mathbb T$ and ${\mathbb P}_{\sV}$. We note that, to date, the strict counterpart of the notion of lax $\lam$-algebra as introduced in Section 4 does not seem to have been explored yet -- and may be of much lesser importance than the lax version --, but must in any case not be confused with a different notion appearing in IV.3 of Manes' book \cite{Manes1976}.

In fact, in this paper we present the lax distributive laws and their equivalent lax monad extensions, together with their isomorphic model categories ({\em i.e.}, lax $\lam$-algebras {\em vs.} $({\mathbb T},\sV)$-categories) at a considerably generalized level, by replacing the quantale $\sV$ by a small {\em quantaloid} $\CQ$,
{\em i.e.}, by a small category (rather than a monoid) enriched in the category {\bf Sup} of complete lattices and their suprema preserving maps (see \cite{RosenthalQuantaloids, Stubbe2005, Stubbe2006, Heymans}). For this to work, $\mathbb T$ must now be a monad on the comma category $\Set/\CQ_0$, with $\CQ_0$ the set of objects of $\CQ$, rather than just on $\Set$ as in the quantale case when $\CQ_0\cong 1$ is a singleton set.
However, noting that every $\Set$-monad $\mathbb T$ lifts to a $\Set/\CQ_0$-monad when $\CQ_0$ carries a Eilenberg-Moore $\mathbb T$-algebra structure, one realizes immediately that the range of applications is not at all reduced by moving to the comma category. The opposite is true, even when $\mathbb T$ is the identity monad and $\lam$ the identity transformation of the discrete presheaf monad $\sP_{\CQ}$, where lax $\lam$-algebras are simply $\CQ$-categories, as first considered in Walters' pioneering note \cite{Walters}. More generally then, in the hierarchy
$$\bfig
\Atriangle/-`-`/<700,400>[\text{closed
bicategories}`\text{quantaloids}`\text{monoidal-closed categories};``]
\Vtriangle(0,-400)/`-`-/<700,400>[\text{quantaloids}`\text{monoidal-closed
categories}`\text{quantales};``]
\efig$$
we add a monad to the enrichment through quantaloids, thus complementing the corresponding past efforts for quantales and monoidal-closed categories, and leaving the field open for future work on closed bicategories. In doing so, our focus is not on a generalization {\em{per se}}, but rather on the expansion of the range of meaningful examples. In fact, through the consideration of quantaloids that arise from quantales via the well-studied Freyd-Grandis ``diagonal construction", originating with \cite{Freyd}, presented in \cite{Grandis}, and used by many authors
(see, for example, \cite{HohleKubiak, PuZhang, Stubbe2014}), we demonstrate that the quantaloidic context allows for the incorporation of many ``partially defined" structures, which typically relax the reflexivity condition of the total context in a meaningful way.

In \cite{Hofmann}, Hofmann gave the notion of a (lax) {\em topological theory} which, in the presence of the $\Set$-monad $\mathbb T$ and the commutative quantale $\sV$, concentrates all needed information about the specific
Barr-type lax extension of $T$ to $\sV\text{-}{\bf Rel}$ into a (lax) $\mathbb T$-algebra structure $\xi:T\sV\to\sV$ on the set $\sV$, such that $\xi$ makes the monoid operations $\otimes: \sV\times\sV\to\sV$
 and ${\sf k}:1\to\sV$ lax $\mathbb T$-homomorphisms and satisfies a monotonicity and naturality condition. While in \cite{ClementinoTholen} we characterized the Barr-Hofmann lax extensions of $\mathbb T$ arising from such theories among all lax extensions, the two main results of this paper clarify the role of Hofmann's notion in the quantale setting and extend it considerably to the more general context of a quantaloid $\CQ$. First, in Section 5 we establish a Galois correspondence between monotone lax distributive laws of a given monad
 $\mathbb T$ on $\Set/\CQ_0$ and certain lax $\mathbb T$-algebra structures $\xi$ on $\sP_{\CQ}\CQ_0$. The lax distributive laws closed under this correspondence, called {\em maximal}, give rise to new types of lax monad extension that don't seem to have been explored earlier. Secondly, in Theorem \ref{comparison theorem}, we give a precise comparison of our notion of topological theory (as given in Definition \ref{toptheory}) with Hofmann's more restrictive notion. We also give a context in which the Hofmann-type extensions are characterized as {\em minimal} (see Theorem \ref{minimal}). Let us emphasize that the conditions on the cartesian
 binary and nullary monoid operations used by Hofmann don't compare easily with the conditions on the multiplication and unit of the discrete presheaf monad as used in our setting, and they don't seem to be amenable to direct extension from the context of a commutative quantale to that of a quantaloid.
 For an overview chart on the relationships between lax distributive laws, lax monad extensions, and topological theories, we refer to Section 8.

 A brief outlook on the forthcoming paper \cite{LaiShenTholen} seems to be in order, where we present the non-discrete counterpart of the theory presented here, thus considering monads on the category $\CQ\text{-}\Cat$ of small $\CQ$-categories and their lax distributive laws over the (full) presheaf monad. It is clear from the outset, and largely verified by the existing works on
 monad-quantale-enriched categories, that this setting will make for a more satisfactory theory, simply because the full presheaf monad, unlike its discrete counterpart, is lax idempotent (or {\em of Kock-Z\"{o}berlein type}). Nevertheless, the prior consideration of the discrete case in this paper seems to be a necessary step, in order for us to be able to provide a viable array of monads on $\CQ\text{-}{\Cat}$
 since, with a lax extension of a monad on $\Set/\CQ_0$
  at hand, it is easy to ``lift" monads on $\Set/\CQ_0$ to $\CQ\text{-}\Cat$ (as has been done in \cite{TholenOrd} in the case of a quantale).

For general categorical background, we refer the reader to \cite{MacLane, Adamek1990, Borceux, Kelly1982}.

{\em Acknowledgement.} Parts of the theory developed in the paper have been presented in talks at the Joint Meeting of the American and Portuguese Mathematical Societies in Oporto (Portugal) in June 2015 and at Sichuan and Nanjing Universities in November 2015. I am grateful for helpful comments received, especially from Dirk Hofmann, Maria Manuel Clementino, Gavin Seal, Lili Shen, Hongliang Lai, and Dexue Zhang.

\section{Quantaloid-enriched categories}
A {\em quantaloid} is a category $\CQ$ enriched in the monoidal-closed category {\bf Sup} \cite{JoyalTierney} of complete lattices with suprema-preserving maps; hence, the hom-sets of $\CQ$ are complete lattices, and composition of morphisms from either side preserves arbitrary suprema and has therefore right adjoints. As a consequence, one has binary operations $\rda$ and $\lda$ representing the ``internal homs", that is: for $u:r\to s, \;v:s\to t, \;w:r\to t$ in $\CQ$ one has the morphisms
$(v\rda w):r\to s, \;(w\lda u):s\to t$ given by the equivalences

$$u\leq v\rda w \iff v\circ u\leq w \iff v\leq w\lda u.$$
A {\em lax homomorphism} $\varphi :\CQ\to \CR$ of quantaloids is a lax functor (thus satisfying the rules $1_{\varphi t}\leq \varphi 1_t$ and $\varphi v \circ \varphi u \leq \varphi(v\circ u)$) which
maps hom-sets monotonely; $\varphi$ is a (strict) {\em homomorphism} if $\varphi$ is a functor preserving suprema taken in the hom-sets.
We denote the category of small quantaloids and their (lax) homomorphisms by {\bf Qnd} ({\bf LaxQnd}). The set-of-objects functor

$$(-)_0:{\bf LaxQnd}\to {\bf Set}, \CQ\mapsto {\rm ob}\CQ =: \CQ_0$$
has a right adjoint $(-)_c$ which provides each set X with the chaotic order and considers it as a category $X_c$ with $(X_c)_0=X$, so that for all $x,y\in X$ there is exactly one morphism $x\to y$, denoted by $(x,y)$; having singleton hom-sets only, $X_c$ is trivially a quantaloid, and every {\bf Set}-map becomes a homomorphism.

Throughout the paper, let $\CQ$ be a small quantaloid. A small $\CQ$-{\em category} is a set $X$ provided with a lax homomorphism $a:X_c\to \CQ$. Its object part $a: X\to \CQ_0$ assigns to every $x\in X$ its {\em array} (also called {\em type} or {\em extent}) $ax\in \CQ_0$, often denoted by $|x|=|x|_X=ax$, and its morphism part gives for all $x,y\in \!X\text{  }\CQ$-morphisms $a(x,y):|x|\to |y|$, subject to the rules

$$1_{|x|}\leq a(x,x),\quad a(y,z)\circ a(x,y)\leq a(x,z).$$
A $\CQ$-{\em functor} $f:(X,a)\to (Y,b)$ is an array-preserving map $f:X\to Y$ with $a(x,y)\leq b(fx,fy)$ for all $x,y \in X$. In other words then, the resulting category $\CQ$-{\bf Cat} of small $\CQ$-categories and their $\CQ$-functors is the lax comma category of small chaotic quantaloids over $\CQ$, and one has the set-of-objects functor

$$\bfig
\morphism<1200,0>[\QCat`\Set/\CQ_0;(-)_0]
\Vtriangle(-300,-500)<300,300>[X_c`Y_c`\CQ;f`a`b]
\Vtriangle(900,-500)<300,300>[X`Y`\CQ_0;f`|\text{-}|_X`|\text{-}|_Y]
\place(600,-350)[\mapsto] \place(0,-320)[\leq]
\efig$$
to the comma category of sets over $\CQ_0$. In what follows, we will often write $X$ instead of $X_c$ or $(X,a)$.

An easily proved (see \cite{ShenTholen2015}), but useful, fact is:

\begin{prop}\label{topologicity1}
The functor $(-)_0$ is topological (in the sense of {\rm{\cite{Herrlich1974}}}) and, as a consequence, $\CQ$-{\bf Cat} is totally complete and totally cocomplete (in the sense of {\rm{\cite{StreetWalters}}}).
\end{prop}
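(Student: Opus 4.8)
The plan is to verify directly that $(-)_0 : \QCat \to \Set/\CQ_0$ admits a unique initial lift of every structured source, which is Herrlich's defining property of a topological functor, and then to deduce total completeness and cocompleteness from a standard transfer principle along topological functors.

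First I would fix a structured source: an object $(X, |\text{-}|_X)$ of $\Set/\CQ_0$ together with a family of morphisms $f_i : (X, |\text{-}|_X) \to (Y_i, |\text{-}|_{Y_i})$ in $\Set/\CQ_0$, where each $(Y_i, b_i)$ is a $\CQ$-category. Being a morphism over $\CQ_0$ means each $f_i$ is array-preserving, i.e.\ $|f_i x|_{Y_i} = |x|_X$, so that all the morphisms $b_i(f_i x, f_i y)$ lie in the single hom-lattice $\CQ(|x|_X, |y|_X)$. Since hom-lattices are complete, I can define the candidate initial structure by the pointwise infimum
$$a(x,y) := \bw_{i} b_i(f_i x, f_i y) : |x|_X \to |y|_X.$$
Then I check that $(X,a)\in\QCat$: reflexivity $1_{|x|_X} \le a(x,x)$ holds because $1_{|f_i x|} \le b_i(f_i x, f_i x)$ for every $i$ and $|f_i x| = |x|_X$; transitivity $a(y,z) \circ a(x,y) \le a(x,z)$ follows because composition in $\CQ$ preserves suprema, hence is monotone, so $a(y,z)\circ a(x,y) \le b_i(f_i y, f_i z)\circ b_i(f_i x, f_i y) \le b_i(f_i x, f_i z)$ for each $i$, and one takes the infimum over $i$ on the right.

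Next I would confirm the universal property. By construction $a(x,y) \le b_i(f_i x, f_i y)$, so every $f_i : (X,a) \to (Y_i,b_i)$ is a $\CQ$-functor. For initiality, let $(W,c)$ be any $\CQ$-category and $g : W \to X$ an array-preserving map such that each $f_i \circ g$ is a $\CQ$-functor; then $c(w,w') \le b_i(f_i g w, f_i g w')$ for all $i$, whence $c(w,w') \le \bw_i b_i(f_i g w, f_i g w') = a(gw, gw')$, so that $g : (W,c) \to (X,a)$ is itself a $\CQ$-functor. This is precisely the universal property of an initial lift. Uniqueness of the lift is then immediate from faithfulness of $(-)_0$ (a $\CQ$-functor is determined by its underlying map over $\CQ_0$): applying initiality of two competing lifts of the same source to the identity map on $X$ forces their structures to coincide. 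Hence $(-)_0$ is topological.

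For the consequence I would argue as follows. The comma category $\Set/\CQ_0$ is isomorphic to the power $\Set^{\CQ_0}$, equivalently the presheaf category on the discrete category $\CQ_0$; since $\Set$ is totally complete and totally cocomplete in the sense of Street--Walters and these properties pass to powers (functor categories with small domain), so is $\Set/\CQ_0$. It then remains to invoke the transfer principle that a category which is topological over a totally complete (resp.\ totally cocomplete) category is itself totally complete (resp.\ totally cocomplete), and to apply it to $(-)_0$. I expect the only genuine obstacle to lie in this last step: unlike the elementary initial-lift construction, the transfer of totality rests on the general theory of total categories — a topological functor possesses both adjoints and lifts limits and colimits — and the cleanest route is to cite the pertinent result rather than reprove it, which is presumably what the reference \cite{ShenTholen2015} provides.
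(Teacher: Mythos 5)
Your proof is correct and follows essentially the same route as the paper: the paper's proof consists precisely of exhibiting the $(-)_0$-initial structure $a(x,y)=\bw_{i\in I}b_i(f_ix,f_iy)$ and leaving the (easy) verifications and the transfer of total (co)completeness along a topological functor implicit, all of which you have spelled out correctly.
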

\begin{proof}
The $(-)_0$-initial structure $a$ on $X$ with respect to a family of $(\bf{Set}/\CQ_0)$-morphisms $f_i:X\to Y_i $ with each $Y_i$ carrying the $\CQ$-category structure $b_i (i\in I)$ is given by
$$a(x,y)=\bw_{i\in I}b_i(f_ix,f_iy),$$
with $x,y \in X$.
 \end{proof}
Incidentally, it seems fitting to note here that topologicity of a faithful functor is characterized as total cocompleteness when the concrete category in question is considered as a category enriched over a certain quantaloid: see \cite{Garner, ShenTholen2016}.

Next, one easily sees that every lax homomorphism  $\varphi: \CQ\to \CR$ of quantaloids induces the {\em change-of-base functor}
$$B_{\varphi}:\CQ\text{-\bf{Cat}}\to \CR\text{-\bf{Cat}},\;(X,a)\mapsto (X,\varphi a),$$
which commutes with the underlying {\bf Set}-functors. More precisely, with $B_{\phi_0}$ denoting the effect of $B_{\phi}$ on the underlying sets over $\CQ_0$, one has the commutative diagram of functors which exhibits $(B_{\varphi},B_{\phi_0})$ as a morphism of topological functors:
$$\bfig
\square<1000,400>[\QCat`\CR\text{-}\bf{Cat}`\Set/\CQ_0`\Set/\CR_0;B_{\varphi}`(-)_0`(-)_0`B_{\varphi_0}]
\efig$$
Obviously, $B_{\varphi}$ preserves $(-)_0$-initiality when $\varphi$ preserves infima. Let us also mention that, if we order the hom-sets of {\bf LaxQnd} by
$$\varphi \leq \psi \iff \forall u: r\to s \text{ in } \CQ: \varphi r=\psi r, \varphi s=\psi s \text{ and } \varphi u\leq\psi u,$$
then $\varphi \leq \psi$ gives a natural transformation $B_{\varphi}\to B_{\psi}$ whose components at the {\bf Set}-level are identity maps; thus a 2-functor $B_{(-)}:{\bf LaxQnd} \to {\bf CAT}$ emerges.

The one-object quantaloids are the (unital) {\em quantales}, {\em i.e.}, the complete lattices $\sf V$ that come with a monoid structure whose binary operation $\otimes$ preserves suprema in each variable. We generally denote the $\otimes$-neutral element by $\sf k$;
so, in quantaloidic terms, ${\sf k}=1_*$, when we denote by $*$ the only object of $\sf V$ as a category. Let us record here a well-known list of relevant quantales $\sf V$ with their induced categories $\VCat$.

\begin{exmp}\label{firstexamples}
\begin{itemize}
\item[\rm{(1)}] The terminal quantaloid {\sf 1} is a quantale, and ${\sf 1}\text{-}{\bf Cat}={\bf Set}$. The initial quantale is (as a lattice) the two-element chain ${\sf 2}=\{\perp<\top\}$, with $\otimes=\wedge, {\sf k}=\top$, and ${\sf 2}\text{-}{\bf Cat}$ is the category {\bf Ord} of preordered sets and monotone maps. (In what follows, we suppress the prefix ``pre" in ``preorder", adding ``separated" whenever antisymmetry is required.)
\item[\rm{(2)}] $[0,\infty]$ denotes the extended real line, ordered by the natural $\geq$ (so that 0 becomes the largest and $\infty$ the least element) and considered as a quantale with the binary operation +, naturally extended to $\infty$. (This is the monoidal-closed category first considered by Lawvere \cite{Lawvere}.) A $[0,\infty]$-category is a generalized metric space, {\em i.e.,} a set $X$ provided with a function $a:X\times X\to [0,\infty]$ with $a(x,x)=0$ and $a(x,z)\leq a(x,y)+a(y,z)$ for all $x,y,z \in X$;
$[0,\infty]$-functors are non-expanding maps. We write ${\bf Met} =[0,\infty]$-{\bf Cat} for the resulting category. The only homomorphism $2\to [0,\infty]$ of quantales has both a left and a right adjoint, hence there is an embedding ${\bf Ord}\to {\bf Met}$ that is both reflective and coreflective.
\item[\rm{(3)}] The quantale $[0,\infty]$ is of course isomorphic to the unit interval $[0,1]$, ordered by the natural $\leq$ and provided with the multiplication. Interpreting $a(x,y)\in [0,1]$ as the probability that $x,y\in X$ be related under a given random order $\tilde{a}$ on $X$, we call
$(X,a)\in [0,1]\text{-}{\bf Cat}$ a
{\em probabilistic ordered set} and denote the resulting cateory by {\bf ProbOrd}, which, of course, is just an isomorphic guise of {\bf Met}.

Both, $[0,\infty]$ and $[0,1]$ are embeddable into the quantale $\bf {\Delta}$ of all {\em distance distribution functions} $\varphi: [0,\infty]\to[0,1]$, required to satisfy the left-continuity condition $\phi(\beta)={\rm sup}_{\alpha<\beta}\phi(\alpha)$, for all $\beta\in [0,\infty]$. Its order is inherited from $[0,1]$, and its monoid structure is given by the commutative
{\em convolution} product $(\phi\otimes\psi)(\gamma)={\rm sup}_{\alpha+\beta\leq\gamma}\phi(\alpha)\psi(\beta)$;
the $\otimes$-neutral function $\kappa$ satisfies $\kappa(0)=0$ and $\kappa(\alpha)=1$ for all $\alpha >0$.
Interpreting $a(x,y)(\alpha)$ as the probability that a given randomized metric $\tilde{a}:X\times X\to [0,\infty]$
satsisfies $\tilde{a}(x,y)<\alpha$, one calls the objects $(X,a)$ in ${\bf {\Delta}}\text{-}{\bf Cat}$ {\em probabilistic metric spaces} \cite{HofmannReis, Jager}, and we denote their category by {\bf ProbMet}.

The quantale homomorphisms
$\sigma:[0,\infty]\to {\bf {\Delta}}$
and $\tau:[0,1]\to {\bf {\Delta}}$,
defined by $\sigma(\alpha)(\gamma)=0\text{ if }\gamma\leq\alpha\text{, and }1$ otherwise, and
$\tau(u)(\gamma)=u\text{ if }\gamma>0\text{, and }0$ otherwise, induce full embeddings of {\bf Met} and {\bf ProbOrd} into {\bf ProbMet}, respectively. Their significance lies in the fact that they present $\bf{\Delta}$ as a coproduct of
$[0,\infty]$ and $[0,1]$ in the category of commutative quantales and their homomorphisms, since every $\varphi\in {\bf{\Delta}}$ has a presentation $\varphi={\rm sup}_{\gamma\in[0,\infty]}\sigma(\gamma)\otimes\tau(\varphi(\gamma))$.

\item[\rm{(4)}] The powerset $2^M$ of a (multiplicative) monoid $M$ (with neutral element $e_M$) becomes a quantale when ordered by inclusion and provided with the composition $B\circ A=\{\beta \alpha \mid \alpha, \beta \in M\}$ for $A, B\subseteq M$; in fact, it is the free quantale over the monoid $M$. The objects of $2^M$-{\bf Cat} are sets
$X$ equipped with a family $(\leq_{\alpha})_{\alpha \in M}$ of relations on them satisfying the rules
$x\leq_{e_M}x\text{ and }(x\leq_{\alpha} y,\; y\leq_{\beta}z\Rightarrow x\leq_{\beta \alpha}z)$; morphisms must preserve each relation of the family; see \cite{MonTop} V.1.4. Every homomorphism $\varphi: M\to N$ of monoids may be considered a homomorphism $\varphi:2^M\to 2^N$ of quantales via direct image,
while its right adjoint given by inverse image is in general only a lax homomorphism $\varphi^{-1}:2^N\to 2^M$. Still, 2-functoriality of $ (-){\text{-}}{\bf Cat}$ produces adjunctions $\varphi (-) \dv \varphi^{-1}(-):2^N{\text{-}}{\bf Cat}\to 2^M{\text{-}}{\bf Cat}$. In particular, when considering $1\to M\to 1$ with $1$ trivial, one sees that there is a coreflective embedding of {\bf Ord} into $2^M{\text{-}}{\bf Cat}$, as well as a reflective one.
\item[\rm{(5)}] Every {\em frame, i.e.}, every complete lattice in which binary infima distribute over arbitrary suprema, may be considered a quantale; in fact, these are precisely the commutative quantales in which every element is idempotent. For example, in addition to $\sf 2$ of (1), $([0,\infty],\geq)$ may be considered a quantale $[0,\infty]_{\rm max}$ when, instead of $\alpha +\beta$ as in (2), the binary operation is given by ${\rm max}\{\alpha,\beta\}$.
The resulting category $[0,\infty]_{\rm{max}}{\text{-}}\Cat$ is the category {\bf UMet} of generalized {\em ultrametric} spaces $(X,a)$ whose distance function must satisfy $a(x,z)\leq {\rm max}\{a(x,y), a(y,z)\}$ instead of the weaker triangle inequality.
\end{itemize}
\end{exmp}

A quantale $\sf V$ is called {\em divisible} \cite{Hohle} if for all $u\leq v$ in $\sf V$ there are $a,b\in \sf V$ with $a\otimes v=u=v\otimes b$; it is easy to see that then one may choose $a=u\lda v$ and $ b=v\rda u$. Applying the defining property to $u=\sf k$ and $v=\top$ the top element, so that $\top=\top\otimes {\sf k}=\top\otimes\top\otimes b\leq\top\otimes b=\sf k$, one sees that such a quantale must be {\em integral}, {\em i.e.}, $\sf k=\top$. Of the quantales of Example \ref{firstexamples}, all but $2^M$ are divisible; $2^M$ is not even integral, unless the monoid $M$ is trivial.

We refer to \cite{Freyd, Grandis} for the the Freyd-Grandis construction of freely adjoining a proper orthogonal factorization system to a category. In the case of a quantaloid $\CQ$ it produces the quantaloid ${\sf D}\CQ$ of ``diagonals" of $\CQ$ (so named in \cite{Stubbe2014}, after the prior treatments in \cite{HohleKubiak, PuZhang}), which has a particulary easy description when the quantaloid is a divisible quantale $\sf V$: the objects of  the quantaloid $\sf DV$ are the elements of $\sf V$, and there is a morphism $(u,d,v):u\to v$ in ${\sf DV}$ if $d\in {\sf V}$ satisfies $d\leq u\wedge v$;
for ease of notation, we write $d:u\to/~>/v$, keeping in mind that it is essential to keep track of the domain $u$ and the codomain $v$.
The composite $e\circ d$ of $d$ with $e:v\to/~>/w$ in ${\sf DV}$ is defined by $e\otimes (v\rda d)=(e\lda v)\otimes d$ in $\sf V$, and $v:v\to/~>/v$ serves as the identity morphism on $v$ in {\sf DV}. The order of the hom-sets of {\sf DV} is inherited from {\sf V}.

The quantale {\sf V} is fully embedded into {\sf DV} by the homomorphism
$\iota=\iota_{\sf V}: {\sf V}\to {\sf DV}, \; v\mapsto (v:{\sf k}\to/~>/{\sf k})$,
of quantaloids. There are lax homomorphisms, known as the {\em backward} and {\em forward globalization} functors (see \cite{FourmanScott, PuZhang, TaoLaiZhang}),
    \begin{align*}
    \delta : {\sf DV}\to {\sf V},\quad\quad&\quad\quad\gamma: {\sf DV} \to {\sf V},\\
    (d:u\to/~>/v)\mapsto \;v\rda d\quad&\quad(d:u\to/~>/v) \mapsto d\lda u
    \end{align*}
which, from a factorization perspective, play the role of the {\em domain} and {\em codomain} functors. They satisfy $ \delta\iota_{\sf V}=1_{\sf V}=\gamma\iota_{\sf V}$ and therefore make $\sf V$ a retract of ${\sf DV}$. Consequently, the full embedding
${\sf V}\text{-}{\bf Cat}\to {\sf DV}\text{-}{\bf Cat}$
induced by $\iota$ has retractions, facilitated by $\delta$ and $\gamma$ (see Example \ref{lastexample}).

More importantly, when one considers {\sf V} as a {\sf V}-category ({\sf V,h}) with ${\sf h}(u,v)=v\lda u$, there is a full reflective embedding
$$E_{\sf V}:{\sf DV}\text{-}{\bf Cat}\to{\sf V}\text{-}{\bf Cat}/{\sf V}$$
which provides a {\sf DV}-category $(X,a)$ with the {\sf V}-category structure
 $d$ defined by $d(x,y)=a(x,y)\lda a(x,x)$ and considers it as a
 {\sf V}-category over {\sf V} via $tx=a(x,x)$. Conversely, the reflector provides a
${\sf V}$-category $(X,d)$ that comes equipped with a ${\sf V}$-functor $t:X\to {\sf V}$, with the $\sf DV$-category structure $a$, defined by $a(x,y)=d(x,y)\otimes tx$; see \cite{LaiShenTholen}.

The quantaloids ${\sf DV}$ induced by the divisible quantales $\sf V$ of Example \ref{firstexamples} are of interest in what follows. Here we mention only a couple of easy cases.
\begin{exmp}\label{secondexamples}
\begin{itemize}
\item[\rm(1)]
The quantaloid ${\sf D}{\sf 2}$ has objects $\perp,\top$, and there are exactly two morphisms $\perp,\top:\top\to/~>/\top$ while all other hom-sets are trivial, each of them containing only $\perp$. The object part of a ${\sf D2}$-category structure on a set $X$ is given by its fibre over $\top$, {\em i.e.}, by a subset $A\subseteq X$ and an order on $A$; in other words, by a truly {\em partial} (!) order on $X$. A ${\sf D2}$-functor  $f:(X,A)\to (Y,B)$ is a map $f:X\to Y$ with $f^{-1}B=A$ whose restriction to $A$ is monotone. We write {\bf ParOrd} for {\sf D2}-{\bf Cat}.
\item[\rm(2)]
For a
${\sf D}([0,\infty])$-category $(X,a)$ one must have (in the natural order $\leq$ of $[0,\infty]$) $|x|\leq a(x,x)\leq |x|$ for all
$x\in X$, so that the object part of the structure $a: X\times X\to [0,\infty]$
is determined by its morphism part. Since $\alpha\circ \beta = (\alpha\lda\nu)+\beta=\alpha-\nu+\beta\text{ for }\nu\leq \alpha, \beta\in[0,\infty]$, the defining conditions on $a$ may now be stated as
$$a(x,x)\leq a(x,y)\text{ and }  a(x,z)\leq a(x,y)-a(y,y)+a(y,z)\text{ for all }x,y,z\in X.$$
With ${\sf D}([0,\infty])$-functors $f:(X,a)\to(Y,b)$ required to satisfy
$$b(f(x),f(y))\leq a(x,y)\text{ and }b(f(x),f(x))=a(x,x)\text{ for all }x,y\in X$$
one obtains the category {\bf ParMet} of {\em partial metric spaces}, as originally considered in \cite{BKMP}.
(For example, when one thinks of $a(x,y)$ as of the cost of transporting goods from location $x$ to location $y$, which will entail
some fixed overhead costs $a(x,x)$ and $a(y,y)$ at these locations, the term $-a(y,y)$ in the ``partial triangle inequality" justifies itsself since the operator should not pay the overhead twice at the intermediate location $y$.)
For ${\sf V}=[0,\infty]$, the full embedding $E_{\sf V}$ in fact gives an isomorphism
$${\bf ParMet}\cong {\bf Met}/[0,\infty]$$
of categories; {\em i.e.}, partial metric spaces and their non-expanding maps may equivalently be considered as  metric spaces $(X,d)$ that come with a ``norm" $t:X\to [0,\infty]$ satisfying $ty-tx\leq d(x,y)$ for all $x,y\in X$, the morphisms of which are norm-preserving and non-expanding.
The presentation of {\bf ParMet} as a comma category makes it easy to relate it properly to $\bf Met$, as we may look at the forgetful functor $\Sigma:{\bf Met}/[0,\infty]\to{\bf Met}$ and its right adjoint
$X\mapsto (\pi_2: X\times [0,\infty]\to[0,\infty])$ (with the direct product taken in {\bf Met}). When expressed in terms of partial metrics, $\Sigma$ is equivalently described by
$$B_{\gamma}:{\bf ParMet}\to{\bf Met},\;(X,a)\mapsto(X,\tilde{a}),\;\tilde{a}(x,y)=a(x,y)-a(x,x),$$
and its right adjoint assigns to $(X,d)\in{\bf Met}$ the set $X\times [0,\infty]$ provided with the partial metric $d^{+}$, defined by
$$d^{+}((x,\alpha),(y,\beta))=d(x,y)+{\rm max}\{\alpha,\beta\}$$
for all $x,y\in X, \alpha,\beta\in[0,\infty]$.
\end{itemize}
\end{exmp}

\section{Encoding a quantaloid by its discrete presheaf monad}
For a quantaloid $\CQ$ one forms the category $\QRel$ of $\CQ$-{\em relations}, as follows: its objects are those of $\Set/\CQ_0$, {\em i.e.}, sets $X$ that come with an {\em array} (or {\em type}) map $a=a_X:X\to \CQ_0$, also denoted by $|\text{-}|=|\text{-}|_X$, and a morphism
$\phi:X\rto Y$ in $\QRel$ is given by a family of morphisms $\phi(x,y):|x|_X\to|y|_Y\;(x\in X, y\in Y)$ in $\CQ$; its composite
with $\psi:Y\rto Z$ is defined by
$$(\psi\circ\phi)(x,z)=\bv_{y\in Y}\psi(y,z)\circ\phi(x,y).$$
A map $f:X\to Y$ over $\CQ_0$ may be seen as  a $\CQ$-relation via its $\CQ${\em -graph} or its $\CQ${\em -cograph}, as facilitated by the functors
\begin{align*}
&\Set/\CQ_0\to^{(-)_{\circ}}\QRel\to/<-/^{(-)^{\circ}}(\Set/\CQ_0)^{\op}\\
&f_{\circ}(x,y)=\left\{\begin{array}{ll}
1_{|x|} & \text{if}\ f(x)=y\\
\bot & \text{else}
\end{array}\right\}=f^{\circ}(y,x).
\end{align*}
For $X$ in $\Set/\CQ_0$ and $s\in \CQ_0$, a $\CQ$-{\em presheaf} $\sigma$ on $X$ with array $|\sigma|=s$ is a $\CQ$-relation $\sigma: X \rto \{s\}$ (where $\{s\}$ is considered as a set over $\CQ_0$ via the inclusion map); hence, $\si$ is a family $(\si_x:|x|\to s)_{x\in X}$ of $\CQ$-morphisms
with specified common codomain. Assigning to $X$ the set $\sP X=\sP_{\CQ}X$ of $\CQ$-presheaves on $X$ defines the object part of a left adjoint to (the opposite of) the $\CQ$-cograph functor, with the morphism part and the correspondence under the adjunction described by
$$\bfig
\morphism(-200,100)<400,0>[X`Y;\phi] \place(-10,100)[\mapstochar]
\morphism(-200,-100)<400,0>[Y`\sP X;\phila]
\morphism(-300,20)/-/<650,0>[`;]
\place(760,20)[(\phila(y))_x=\phi(x,y)]
\morphism(1600,100)/@{->}@<4pt>/<700,0>[\Set/\CQ_0`\QRel^{\op};(-)^{\circ}]
\morphism(2300,100)|b|/@{->}@<4pt>/<-700,0>[\QRel^{\op}`\Set/\CQ_0;\sP]
\place(1930,100)[\mbox{\scriptsize$\bot$}]
\morphism(1400,-100)<400,0>[(\sP Y`\sP X);\phi^{\odot}]
\morphism(2040,-100)/|->/<-150,0>[`;]
\morphism(2100,-100)<400,0>[(X`Y).;\phi] \place(2290,-100)[\mapstochar]
\place(1600,-200)[\tau\mapsto\tau\circ\phi]
\efig$$
The unit $\sy$ and counit $\varepsilon$ of the adjunction are given by
$$\sy_X =\overleftarrow{1_X^{\circ}}:X\to \sP X,  \;(\sy y)_x=1_{|y|}\text{ iff }x=y;\quad
\varepsilon_X:X\rto\sP X, \;\varepsilon_X(x,\sigma)=\sigma_x:|x|\to|\sigma|.$$
The adjunction induces the monad $\bbP=\bbP_{\CQ} =(\sP,\sfs,\sy)$ on $\Set/\CQ_0$ which, for future reference, we record here explicitly as well:
$$\bfig
\place(350,-150)[\sP:\Set/\CQ_0 \to \Set/\CQ_0,\quad(X\to^f Y)\mapsto f_!:=(f^{\circ})^{\odot}:\sP X\to\sP Y,\quad (f_!\si)_y=\displaystyle\bv\limits_{x\in f^{-1}y}\si_x;]
\place(1080,-270)[\si\mapsto\si\circ f^{\circ}]
\place(350,-500)[\sfs_X=\varepsilon_X^{\odot}:\sP\sP X\to\sP X,\quad (\sfs_X\Sigma)_x=\displaystyle\bv\limits_{\sigma\in\sP X}\Sigma_{\sigma}\circ\sigma_x.]
\place(100,-620)[ \Sigma\mapsto\Sigma\circ\varepsilon_X]
\efig$$

One notes that  $\CQ\text{-}{\bf Rel}$ is a (large) quantaloid that inherits the pointwise order of its hom-sets from $\CQ$. The full embedding
$\CQ\to\CQ\text{-}{\bf Rel}$, which interprets every $s\in\CQ_0$ as the set $\{s\}$ over $\CQ_0$,
is therefore a homomorphism of quantaloids. Its image serves as a generating set in $\CQ\text{-}{\bf Rel}$.
Under the category equivalence  $\Set/\CQ_0\simeq \Set^{\CQ_0}$ the set $\sP X$ over $\CQ_0$ corresponds to $(\CQ\text{-}{\bf Rel}(X,\{s\}))_{s\in \CQ_0}$, which lives in ${\bf Sup}^{\CQ_0}$. The corresponding order on $\sP X$ is described by
$$\sigma \leq \sigma' \iff |\sigma|=|\sigma'|\text{  and  }\forall x\in X \;(\sigma_x\leq\sigma'_x).$$
For $f:X\to Y$ in $\Set/\CQ_0$, the map $\;f_!:\sP X\to\sP Y$, considered as a morphism in ${\bf Sup}^{\CQ_0}$, preserves suprema and, therefore, has a right adjoint $f^!:\sP Y\to\sP X$ which actually preserves suprema as well and is easily described in $\Set/\CQ_0$ by
$$\forall\tau \in \sP Y,\; x\in X\;((f^!\tau)_x=\tau_{fx});$$
since $$f^!=(f_{\circ})^{\odot},$$ the adjunction $f_!\dashv f^!$ follows from $f_{\circ}\dashv f^{\circ}$ in $\CQ{\text-}{\bf Rel}$ and the monotonicity of $\text{(-)}^{\odot}$ on hom-sets, which we explain next.

The sets ${\bf Set}(Y,\sP X)$ with their pointwise order inherited from $\sP X$ make the bijections
$$\CQ\text{-}{\bf Rel}(X,Y)\to\Set/\CQ_0(Y,\sP X),\;\phi\mapsto\phila,$$
order isomorphisms. Since $$\overleftarrow{\psi\circ\phi}=\phi^{\odot}\cdot\overleftarrow{\psi}$$ (for $\psi:Y\rto Z$), monotonicity of $(\psi\mapsto\psi\circ\phi)$ in $\psi$ makes the maps
$$\Set/\CQ_0(Z,\sP Y)\to\Set/\CQ_0(Z,\sP X), \;g\mapsto \phi^{\odot}\cdot g,$$
monotone. This proves item (1) of the following Lemma.

\begin{lem}\label{first rules}
For $\phi, \phi':X\rto Y$ in $\CQ\text{-}\bf Rel$ and $f:X\to Y,\; g,g':Z\to \sP Y, h:W\to Z$ in $\Set/\CQ_0$ one has:
\begin{itemize}
\item[\rm{(1)}] $\phi\le\phi',\; g\leq g'\Rightarrow \phi^{\odot}\cdot g\cdot h\leq \phi'^{\odot}\cdot g'\cdot h;$
\item[\rm{(2)}] $\sy_X\leq f^!\cdot\sy_Y\cdot f,\; \;f^!\cdot\sfs_Y=\sfs_X\cdot (f^!)_!.$
\end{itemize}
\end{lem}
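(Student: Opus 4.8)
The plan is to dispatch item (1) --- which has in any case already been verified in the discussion immediately preceding the statement, via the identity $\overleftarrow{\psi\circ\phi}=\phi^{\odot}\cdot\overleftarrow{\psi}$ and monotonicity of $(-)^{\odot}$ --- and to concentrate on item (2), proving both assertions by evaluating the two sides on the explicit pointwise formulas for $\sy$, $\sfs$, $f_!$ and $f^!$ recorded above. Since every map occurring is a morphism of $\Set/\CQ_0$ and hence preserves arrays, in each case it suffices to fix an element of the domain and compare the resulting presheaves component by component, i.e. to compare their values $(\,\cdot\,)_{x'}$ or $(\,\cdot\,)_{x}$ in the appropriate hom-lattice of $\CQ$.

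For the first inequality I would argue as follows. Both $\sy_X$ and $f^!\cdot\sy_Y\cdot f$ send $x\in X$ to a presheaf on $X$ of array $|x|$, so I fix $x$ and evaluate at an arbitrary $x'\in X$. Using $(f^!\tau)_{x'}=\tau_{fx'}$ together with the description of $\sy$, the left-hand value is $1_{|x|}$ if $x'=x$ and $\bot$ otherwise, whereas the right-hand value is $1_{|x|}$ if $fx'=fx$ and $\bot$ otherwise; since $x'=x$ forces $fx'=fx$, the asserted inequality $\sy_X\leq f^!\cdot\sy_Y\cdot f$ drops out (with equality precisely when $f$ is injective). Alternatively, and more conceptually, one may note that naturality of the unit $\sy$ gives $f_!\cdot\sy_X=\sy_Y\cdot f$, so that the unit $1_{\sP X}\leq f^!\cdot f_!$ of the adjunction $f_!\dashv f^!$ yields $\sy_X\leq f^!\cdot f_!\cdot\sy_X=f^!\cdot\sy_Y\cdot f$ at once.

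For the identity $f^!\cdot\sfs_Y=\sfs_X\cdot(f^!)_!$ the plan is again a direct component computation. Fixing $\Sigma\in\sP\sP Y$ and $x\in X$, the left-hand side is immediate: $(f^!\cdot\sfs_Y\,\Sigma)_x=(\sfs_Y\Sigma)_{fx}=\bv_{\tau\in\sP Y}\Sigma_{\tau}\circ\tau_{fx}$. For the right-hand side I would first apply the formula for $(f^!)_!=\sP(f^!)$, namely $((f^!)_!\Sigma)_{\rho}=\bv_{f^!\tau=\rho}\Sigma_{\tau}$ for $\rho\in\sP X$, then the formula for $\sfs_X$, and finally use that composition in the quantaloid $\CQ$ preserves suprema in order to pull the inner supremum through the composite with $\rho_x$. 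Since $\rho=f^!\tau$ gives $\rho_x=\tau_{fx}$, the resulting double supremum $\bv_{\rho\in\sP X}\bv_{f^!\tau=\rho}\Sigma_{\tau}\circ\tau_{fx}$ reindexes, the fibres $\{\tau:f^!\tau=\rho\}$ partitioning $\sP Y$, to the single supremum $\bv_{\tau\in\sP Y}\Sigma_{\tau}\circ\tau_{fx}$, matching the left-hand side.

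The only genuinely non-formal point --- and the step I expect to need the most care --- is this last reindexing of the double supremum together with the distributivity of $\circ$ over arbitrary suprema, which is precisely the defining property of a quantaloid; everything else is bookkeeping with arrays and the recorded formulas. I would also remark that the mate calculus for the adjunctions $f_!\dashv f^!$ and $(f_!)_!\dashv(f^!)_!$ delivers the inequality $\sfs_X\cdot(f^!)_!\leq f^!\cdot\sfs_Y$ abstractly, from the naturality $f_!\cdot\sfs_X=\sfs_Y\cdot(f_!)_!$ of $\sfs$ and the counit $(f_!\cdot f^!)_!\leq 1$; so in principle only the reverse inequality requires the explicit computation. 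Since, however, the computation establishes both directions simultaneously, I would simply present it in full.
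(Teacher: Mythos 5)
Your proof is correct, but for the key equality $f^!\cdot\sfs_Y=\sfs_X\cdot(f^!)_!$ it takes a genuinely different route from the paper's. You verify the identity pointwise, using the explicit formulas for $\sfs$, $(-)_!$ and $f^!$; the work is carried by the reindexing of the double supremum over the fibres of $f^!$ together with the sup-preservation of composition in $\CQ$, and both steps are sound. The paper instead proves the more general identity $\phi^{\odot}\cdot\sfs_Y=\sfs_X\cdot(\phi^{\odot})_!$ for an arbitrary $\CQ$-relation $\phi:X\rto Y$, purely formally: writing $\phi^{\odot}=\sfs_X\cdot(\phila)_!$, it applies naturality of $\sfs$ at $\phila$ and the associativity law $\sfs_X\cdot\sfs_{\sP X}=\sfs_X\cdot(\sfs_X)_!$, and the stated equality is the special case $\phi=f_{\circ}$, since $f^!=(f_{\circ})^{\odot}$. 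The formal argument buys generality --- the identity for arbitrary $\phi^{\odot}$ is what gets reused later, e.g.\ in Section 6 --- and avoids element chasing; your computation buys concreteness and makes visible exactly where the quantaloid axiom (distributivity of $\circ$ over suprema) enters. For the remaining assertions you coincide with the paper: item (1) is indeed already established in the discussion preceding the Lemma, and your ``conceptual'' derivation of $\sy_X\leq f^!\cdot\sy_Y\cdot f$ from naturality of $\sy$ and the unit of $f_!\dashv f^!$ is precisely the paper's one-line proof. Your closing remark that the mate calculus already yields the inequality $\sfs_X\cdot(f^!)_!\leq f^!\cdot\sfs_Y$ from naturality of $\sfs$ is also correct, though, as you say, the full computation renders it redundant.
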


\begin{proof}
The inequality of (2) follows from the naturality of $\sy$ and the adjunction $f_!\dashv f^!$. For the stated equality, using $$\phi^{\odot}=\sfs_X\cdot(\phila)_!$$
 we can show more generally
 $$\phi^{\odot}\cdot\sfs_Y=\sfs_X\cdot(\phila)_!\cdot\sfs_Y=\sfs_X\cdot\sfs_{\sP X}\cdot (\phila)_{!!}
 =\sfs_X\cdot(\sfs_X)_!\cdot(\phila)_{!!}=\sfs_X\cdot(\phi^{\odot})_!.$$
\end{proof}
Let us finally mention that, of course, there is a functorial dependency of $\sP_{\CQ}$ on the quantaloid $\CQ$, which we may describe briefly, as follows. Let $\vartheta:\CQ\to\CR$ be a lax homomorphism of quantales, and let
$B_{\vartheta_0}:\Set/\CQ_0\to\Set/\CR_0$ be the induced ``discrete change-of-base functor" (as in Section 2). We can then regard $\vartheta$ as a lax natural transformation
$$\vartheta: B_{\vartheta_0}\sP_{\CQ}\to\sP_{\CR} B_{\vartheta_0},$$
so that $$(B_{\vartheta_0}f)_!\cdot\vartheta_X\leq\vartheta_Y\cdot B_{\vartheta_0}(f_!)$$ for all $f:X\to Y$ in $\Set/\CQ_0$; indeed,
for $X\in\Set/\CQ_0$, one defines $\vartheta_X:B_{\vartheta_0}\sP_{\CQ}X\to\sP_{\CR}B_{\vartheta_0}X$ by
$$\sigma=(\sigma_x)_{x\in X}\mapsto\vartheta\sigma=(\vartheta(\sigma_x))_{x\in X}.$$
In fact, $\vartheta$ is now
a lax monad morphism, as described by the following two  diagrams:

\quad

\quad\quad\quad\quad$\bfig
\Atriangle<300,300>[B_{\vartheta_0}`B_{\vartheta_0}\sP_{\CQ}`\sP_{\CR}B_{\vartheta_0};B_{\vartheta_0}\sy^{\CQ}`\sy^{\CR}B_{\vartheta_0}`\vartheta]
\place(300,120)[\geq]
\efig $\quad\quad\quad\quad\quad
 $\bfig
\square/`->`->`->/<1400,300>[B_{\vartheta_0}\sP_{\CQ}\sP_{\CQ}`\sP_{\CR}\sP_{\CR}B_{\vartheta_0}`B_{\vartheta_0}\sP_{\CQ}`\sP_{\CR}B_{\vartheta_0};`B_{\vartheta_0}\sfs^{\CQ}`\sfs^{\CR}B_{\vartheta_0}`\vartheta]
\morphism(0,300)<700,0>[B_{\vartheta_0}\sP_{\CQ}\sP_{\CQ}`\sP_{\CR}B_{\vartheta_0}\sP_{\CQ};\vartheta\sP_{\CQ}]
\morphism(700,300)<700,0>[\sP_{\CR} B_{\vartheta_0}\sP_{\CQ}`\sP_{\CR}\sP_{\CR}B_{\vartheta_0};\sP_{\CR}\vartheta]
\place(700,130)[\geq]
\efig $

\quad

\quad

Note that, if $\CQ,\CR$ are quantales, these properties simplify considerably, since then $B_{\vartheta_0}$ may be treated as being the identity functor of $\Set$. Furthermore, if $\vartheta:\CQ\to\CR$ is a homomorphism of quantaloids,
the lax natural transformation $\vartheta$ becomes strict and makes the two diagrams commute strictly. Consequently, in the quantale case one obtains a morphism ${\mathbb P}_{\CQ}\to{\mathbb P}_{\CR}$ of monads.

We will return to $\vartheta$  as a lax monad morphism in Section 7 where we discuss change-of-base functors in greater generality.

\section{Monads laxly distributing over the presheaf monad, and their lax algebras}

Let $\bbT=(T,m,e)$ be a monad on $\Set/\CQ_0$. We wish to generate certain lax extensions of $\bbT$ to $\CQ\text{-}{\bf Rel}$, {\em i.e.}, to the (dual of the) Kleisli category of the presheaf monad $\bbP_{\CQ}$. Since, as is well known, strict extensions are provided by distributive laws $T\sP\to \sP T$ (see \cite{MonTop}), we should consider a {\em lax distributive law} $\lambda:T\sP\to\sP T$ instead, that is: a family $\lambda_X:T\sP X\to \sP TX\;(X\in \Set/\CQ_0)$ of morphisms in $\Set/\CQ_0$ satisfying the following inequalities for all $f:X\to Y$:

\begin{tabular}{p{0.3cm}p{5.2cm}p{4.5cm}p{4cm}}
(a) & \hskip 0.7cm $\bfig\square(800,0)<600,300>[T\sP X`T\sP Y`\sP TX`\sP TY;T(f_!)`\lam_X `\lam_Y`(Tf)_!]
 \place(1100,150)[\leq]
 \efig$& $(Tf)_!\cdot \lam_X\leq\lam_Y\cdot T(f_!)$ &(lax naturality of $\lambda$);
\end{tabular}

\begin{tabular}{p{0.3cm}p{5.2cm}p{4.5cm}p{4cm}}
(b) & \hskip 0.8cm $\bfig
\Atriangle<300,300>[TX`T\sP X`\sP TX;T\sy_X`\sy_{TX}`\lam_X]
\place(300,120)[\geq]
\efig$ & $\sy_{TX}\leq \lam_X\cdot T\sy_X$  &
(lax ${\mathbb P}_{\CQ}$-unit law);
\end{tabular}

\begin{tabular}{p{0.3cm}p{5.2cm}p{4.5cm}p{4cm}}
(c) & $\bfig
\square/`->`->`->/<1000,300>[T\sP\sP X`\sP\sP T X`T\sP X`\sP T X;`T\sfs_X`\sfs_{T X}`\lam_X]
\morphism(0,300)<500,0>[T\sP\sP X`\sP T\sP X;\lam_{\sP X}]
\morphism(500,300)<500,0>[\sP T\sP X`\sP\sP T X;(\lam_X)_!]
\place(500,130)[\geq]
\efig$ &  $\sfs_{TX}\cdot (\lam_X)_!\cdot \lam_{\sP X}\leq\lam_X\cdot T\sfs_X$  &(lax ${\mathbb P}_{\CQ}$-mult. law);
\end{tabular}

\begin{tabular}{p{0.3cm}p{5.2cm}p{4.5cm}p{4cm}}
(d) & \hskip 0.8cm $\bfig
\Atriangle<300,300>[\sP X`T\sP X`\sP TX;e_{\sP X}`(e_X)_!`\lam_X]
\place(300,120)[\geq] \efig$
& $(e_X)_!\leq\lam_X\cdot e_{\sP X}$ &
(lax ${\mathbb T}$-unit law);
\end{tabular}

\begin{tabular}{p{0.3cm}p{5.2cm}p{4.5cm}p{4cm}}
(e) & $\bfig
\square/`->`->`->/<1000,300>[TT\sP X`\sP TT X`T\sP X`\sP T X;`m_{\sP X}`(m_X)_!`\lam_X]
\morphism(0,300)<500,0>[TT\sP X`T\sP T X;T\lam_X]
\morphism(500,300)<500,0>[T\sP T X`\sP TTX;\lam_{T X}]
\place(500,130)[\geq]
\efig$ & $ (m_X)_!\cdot\lam_{TX}\cdot T\lam_X\leq\lam_X\cdot m_{\sP X}$  &(lax ${\mathbb T}$-mult. law).
\end{tabular}

Each of these laws is said to hold {\em strictly} (at $f$ or $X$) if the respective inequality sign may be replaced by an equality sign; for a {\em strict distributive law}, all lax laws must hold strictly everywhere.

The lax distributive law $\lam$ is called {\em monotone} if
$$f\leq g\Rightarrow\lam_X\cdot Tf\leq\lam_X\cdot Tg$$
for all $f, g :Y\to \sP X$ in $\Set/\CQ_0$. For simplicity, in what follows, we refer to a monotone lax distributive law $\lam:T\sP\to T\sP$ just as a {\em monotone distributive law}, which indirectly emphasizes the fact that the ambient 2-cell structure is given by order; we also say that $\mathbb T$ {\em distributes monotonely} over ${\mathbb P}_{\CQ}$ by $\lam$ in this case, adding {\em strictly} when $\lam$ is strict.

\begin{exmp}\label{laxdistlawexamples}
\begin{itemize}
\item[(1)] For every quantaloid $\CQ$, the identity monad on $\Set/\CQ_0$ distributes strictly and monotonely over $\sP_{\CQ}$, via the identity transformation $1_{\sP}$.
\item[(2)] For every quantale $\sV$, the {\em list}-monad ${\mathbb L}$ on {\bf Set}, {\em i.e.}, the free-monoid monad with underlying {\bf Set}-functor
$LX=\bigcup_{n\geq 0}X^n$, distributes strictly and monotonely over $\sP_{\sV}$, via
$$\otimes_X:L\sP_{\sV} X\to\sP_{\sV}LX,\; (\sigma^1,...,\sigma^n)\mapsto \sigma\text{ with }
\sigma_{(x_1,...,x_m)}=\left\{\begin{array}{ll}
\sigma^1_{x_1}\otimes...\otimes\sigma^n_{x_n}
 & \text{if}\ m=n,\\
\bot & \text{else.}
\end{array}\right\}$$
For $\sV={\sf 2}$, so that $\sP_{\sf 2}\cong \sP$ is the (covariant) power set functor, we in particular obtain the strict monotone distributive law
$$\times_X:LPX\to PLX,\; (A_1,...,A_n)\mapsto A_1\times...\times A_n,$$
that was mentioned in the Introduction.
\item[(3)] For every quantale $\sV$, the {\bf Set}-monad $\mathbb L$ may be extended to ${\bf Set}/\sV$: using the monoid structure of $\sV$, one maps every
$(X,a)\in {\bf Set}/\sV$ to $(LX,\zeta\cdot La)$, with $\zeta:L\sV\to\sV$ the monoid homomorphism with $\zeta(v)=v$, {\em i.e.}, $\zeta:(v_1,...,v_n)\mapsto v_1\otimes...\otimes v_n$.
For the quantaloid $\CQ={\sf DV}$ (as described in Section 2 when $\sV$ is divisible) and
$\mathbb L$ considered as a ${\bf Set}/\sV$-monad, one now obtains a strict monotone distributive law $\otimes :L\sP_{\CQ}\to\sP_{\CQ}L$ defined just as in (2), with the understanding that
$\sigma=\otimes_X(\sigma^1,...,\sigma^n)$ is now given by $\CQ$-arrows
$$\sigma_{(x_1,...,x_m)}:|x_1|\otimes...\otimes|x_m|\to/~>/|\sigma|=|\sigma^1|\otimes...\otimes|\sigma^n|.$$
\item[{(4)}] (See \cite{LaiTholen}.) For every quantale $\sV=(\sV, \otimes, \sf{k})$, the power set monad ${\mathbb P}={\mathbb P}_{\sf 2}$ of {\bf Set} distributes monotonely over ${\mathbb P}_{\sV}$ by the law $\delta:\sP\sP_{\sV}\to\sP_{\sV}\sP$ which, when we write $\sP_{\sV}X=\sV^X$ as the set of maps $X\to\sV$, is defined by
$$\delta_X:\sP(\sV^X)\to\sV^{\sP X},\quad (\delta_X\mathcal{F})(A)=\bigwedge_{x\in A}\;\bigvee_{\sigma \in \mathcal{F}}\sigma(x),$$
for all $\mathcal{F}\subseteq\sV^X, A\subseteq X.$

\item[(5)] Let ${\mathbb U}=(U,\Sigma,\dot{\text{(-)}})$ denote the ultrafilter monad on $\Set$; so, $U$ assigns to a set $X$ the set of ultrafilters on $X$, the unit assigns to a point in $X$ its principal ultrafilter on $X$, and the monad multiplication is given by the so-called Kowalsky sum; see \cite{Manes, Barr, MonTop}.
 For every commutative and {\em completely distributive} quantale $\sV$ (see \cite{Wood, MonTop}), one defines a monotone distributive law $\beta:U\sP_{\sV}\to\sP_{\sV}U$ by
$$\beta_X:U(\sV^X)\to\sV^{UX},\quad (\beta_X\Fz)(\Fx)=\bigwedge_{A\in\Fx,C\in\Fz}\;\bigvee_{x\in A,\sigma\in C}\sigma(x),$$
for all ultrafilters $\Fz$ on $\sV^X$, $\Fx$ on $X$; compare with Corollary IV.2.4.5 of \cite{MonTop}.
\end{itemize}
\end{exmp}

Returning to the general context of a quantaloid $\CQ$ and a monad $\mathbb T$ on ${\bf Set}/\CQ_0$, we define:
\begin{defn}\label{laxalg}
For a monotone distributive law $\lambda: T\sP\to \sP T$, a {\em lax $\lam$-algebra} $(X,p)$ over $\CQ$ is a set $X$ over $\CQ_0$ with a map $p:TX\to \sP X$ over $\CQ_0$ satisfying

\begin{tabular}{p{0.3cm}p{5.4cm}p{3.5cm}p{4cm}}
(f) & \hskip 1.2cm $\bfig
\Atriangle<300,300>[X`TX`\sP X;e_X`\sy_X`p]
\place(300,120)[\geq] \efig$
& $\sy_X\leq p\cdot e_X$ &
(lax unit law); 
\end{tabular}

\begin{tabular}{p{0.3cm}p{5.4cm}p{3.5cm}p{4cm}}
(g) & $\bfig
\square/`->`->`->/<1200,300>[TTX`\sP\sP X`TX`\sP X;`m_X`\sfs_X`p]
\morphism(0,300)<400,0>[TTX`T\sP X;Tp]
\morphism(400,300)<400,0>[T\sP X`\sP TX;\lam_X]
\morphism(800,300)<400,0>[\sP TX`\sP\sP X;p_!]
\place(600,130)[\geq]
\efig$ & $ \sfs_X\cdot p_!\cdot\lam_X\cdot Tp\leq p\cdot m_X$ & (lax mult. law).
\end{tabular}

A {\em lax $\lam$-homomorphism} $f:(X,p)\to(Y,q)$ of lax $\lam$-algebras must satisfy

\begin{tabular}{p{0.3cm}p{5.4cm}p{3.5cm}p{4cm}}
(h) & \hskip 0.8cm $\bfig
\square<800,300>[TX`TY`\sP X`\sP Y;Tf`p`q`f_!]
\place(400,130)[\leq]
\efig$ & $f_!\cdot p\leq q\cdot Tf$ & (lax homomorphism law).
\end{tabular}

The resulting category is denoted by
$ (\lambda,\CQ)\text{-}{\bf Alg}.$
\end{defn}

\begin{exmp}\label{lamalgebraexamples}
\begin{itemize}
\item[(1)] For $\mathbb T$ the identity monad on $\Set/\CQ_0$ and $\lam=1_{\sP_{\CQ}}$,   there is an isomorphism $(\lam,\CQ)\text{-}{\bf Alg}\cong\CQ\text{-}{\bf Cat}$ that commutes with the forgetful functors to $\Set/\CQ_0$.
Indeed, a lax homomorphism $a:X_c\to \CQ$ of quantaloids constitutes a $\CQ$-relation $a:X\rto X$, such that $p=\overleftarrow{a}:X\to\sP X$ satisfies the lax unit- and multiplication laws (f) and (g), and conversely; similarly for the morphisms of the two categories.
\item[(2)] In Section 6 we will elaborate on the correspondence between monotone distributive laws $\lam$ of $\mathbb T$ over ${\sP}_{\CQ}$ and lax extensions $\hat{\mathbb T}$ of the monad
$\mathbb T$ to $\CQ\text{-}{\bf Rel}$. The $\lam$-algebra axioms for a structure $p:TX\to\sP_{\CQ}X$ may then be expressed in terms of a $\CQ$-relation
$X\rto TX$. In the case of $\CQ$ being a commutative quantale $\sV$,
$$(\lam,\sV)\text{-}{\bf Alg}\cong ({\mathbb T},\sV)\text{-}{\bf Cat}$$
becomes the familiar category of $({\mathbb T},\sV)$-{\em categories} $(X,a: TX\rto X)$ (as defined in \cite{MonTop}, but see Remark \ref{comparisonMonTop} below), satisfying the lax-algebra conditions conditions
$${\sf k}\leq a(e_X(x),x)\text{     and     }a({\mathfrak y},z)\otimes \hat{T}a({\mathfrak X},{\mathfrak y})\leq a(m_X({\mathfrak X}),z)$$
for all $z\in X, {\mathfrak y}\in TX, {\mathfrak X}\in TTX$; morphisms, {\em i.e.}, $({\mathbb T},\sV)$-{\em functors}
$f:(X,a)\to (Y,b)$, satisfy $a(x,x')\leq b(fx,fx')$ for all $x,x'\in X$. For example, in the case of Example \ref{laxdistlawexamples}(2), with ${\mathbb T}={\mathbb L}$ and $\sV = {\sf 2}$, one obtains the category
${\bf MulOrd}$ of {\em multiordered sets} $X$ (carrying a reflexive and transitive relation $LX\rto X$).
For $\sV=[0,\infty]$ one obtains the category {\bf MulMet} of {\em multimetric spaces} $(X,a:LX\times X\to[0,\infty])$, defined to satisfy the conditions
\begin{align*}
&a((x),x)=0,\\
&a((\underbrace{x_{1,1},\dots,x_{1,n_1}}_{\Fx_1},\dots,\underbrace{x_{m,1},\dots,x_{m,n_m}}_{\Fx_m}),z)\leq a(\Fx_1,y_1)+\dots+a(\Fx_m,y_m)+a((y_1,\dots,y_m),z);
\end{align*}
morphisms $f:(X,a)\to(Y,b)$ are non-expanding: $b((fx_1,...,fx_n),fy)\leq a((x_1,...,x_n),y)$.
\item[(3)] (See \cite{LaiTholen}.) For any quantale $\sV$ and the monotone distributive law $\delta$ of Example \ref{laxdistlawexamples}(4) that makes the powerset monad ${\mathbb P}={\mathbb P}_{\sf 2}$ distribute over ${\mathbb P}_{\sV}$,
$$(\delta,\sV)\text{-}{\bf Alg}=\sV\text{-}{\bf Cls}$$
is the category of $\sV$-{\em closure spaces} $(X,c:\sP X\to\sV^X)$ which in particular (when $\sV$ is integral), at every ``level" $u\in\sV$, gives rise to the ``$c$-closure"
$A^{(u)}=\{x\in X \;|\;c(A)(x)\geq u\}$ of $A\subseteq X$.
Considering now the full reflective subcategory of $\sV\text{-}{\bf Cls}$ of those spaces $(X,c)$ for which $c$ is a homomorphism of join-semilattices, so that the finite additivity conditions
$$c(\emptyset)=\bot\text{   and   }c(A\cup B)=c(A)\vee c(B)$$
for all $A,B\subseteq X$ are satisfied, one obtains for $\sV={\sf 2},\;[0,\infty],\text{ or }{\bf{\Delta}}$, respectively topological spaces (as described by a closure operation), approach spaces (as described by a point-set distance function \cite{Lowen}), or probabilistic approach spaces \cite{BrockKent}; in general, we call them $\sV$-{\em approach spaces}. Since lax $\delta$-homomorphisms provide the ``right" morphisms in each of the three cases, we denote the resulting category by $\sV\text{-}{\bf App}$ and obtain in the special cases the categories
$${\sf 2}\text{-}{\bf App}={\bf Top},\quad[0,\infty]\text{-}{\bf App}={\bf App},\quad{\bf{\Delta}}\text{-}{\bf App}={\bf ProbApp}.$$
\item[(4)] As shown in \cite{LaiTholen}, for a commutative and completely distributive quantale $\sV$ and the monotone distributive law $\beta$ of Example \ref{laxdistlawexamples}(5) that makes $\mathbb U$ distribute over ${\mathbb P}_{\sV}$,
$$(\beta,\sV)\text{-}{\bf Alg}\cong {\sV}\text{-}{\bf App}$$
is the category of $\sV$-approach spaces; see also Example \ref{algfctrexample}. Considering for $\sV$ the quantales ${\sf 2}, [0,\infty],\text{ and }\bf{\Delta}$, in this way one obtains respectively the ultrafilter characterization of the objects of the categories {\bf Top} of topological spaces (\cite{Barr, MonTop}), {\bf App} of approach spaces (\cite{Lowen, CleHof2003, MonTop}), and {\bf ProbApp} of probabilistic approach spaces (\cite{BrockKent, Zhang, HofmannReis, Jager}).
\end{itemize}
\end{exmp}
In generalization of Proposition \ref{topologicity1} one easily proves:
\begin{prop}\label{topologicity2}
$(\lam,\CQ)\text{-}{\bf Alg}$ is topological over $\Set/\CQ_0$ and, hence, totally complete and totally cocomplete.
\end{prop}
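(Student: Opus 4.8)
The plan is to show that the forgetful functor $(\lam,\CQ)\text{-}{\bf Alg}\to\Set/\CQ_0$ admits initial lifts of all structured sources; the stated total completeness and cocompleteness then follow, exactly as in Proposition~\ref{topologicity1}, by \cite{StreetWalters}. So let a structured source be given by maps $f_i:X\to Y_i$ in $\Set/\CQ_0$ with each $(Y_i,q_i)$ a lax $\lam$-algebra $(i\in I)$. Guided by the adjunctions $f_{i!}\dashv f_i^{!}$, under which the lax-homomorphism law (h) for $f_i$ reads $p\le f_i^{!}\cdot q_i\cdot Tf_i$, I would define the candidate structure by
\[
p=\bw_{i\in I}\, f_i^{!}\cdot q_i\cdot Tf_i\colon TX\to\sP X .
\]
Since each $f_i$, $Tf_i$ and $q_i$ lies over $\CQ_0$ and $f_i^{!}$ preserves arrays, the maps $f_i^{!}\cdot q_i\cdot Tf_i$ all send a given element of $TX$ into a single fibre of $\sP X$ over $\CQ_0$, which is a complete lattice, so the infimum exists there and $p$ again lies over $\CQ_0$. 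For $\mathbb T$ the identity monad and $\lam=1_{\sP}$ this recovers the formula of Proposition~\ref{topologicity1}.

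Three of the four verifications are immediate. First, each $f_i$ becomes a lax homomorphism: from $p\le f_i^{!}\cdot q_i\cdot Tf_i$ and the counit $f_{i!}\cdot f_i^{!}\le\id$ one gets $f_{i!}\cdot p\le q_i\cdot Tf_i$, which is (h). Second, initiality is a one-line adjunction argument: if $g\colon(Z,r)\to X$ in $\Set/\CQ_0$ is such that every $f_i\cdot g$ underlies a lax homomorphism, then $f_{i!}\cdot g_!\cdot r\le q_i\cdot Tf_i\cdot Tg$ for all $i$, hence $g_!\cdot r\le f_i^{!}\cdot q_i\cdot Tf_i\cdot Tg$ for all $i$, and taking the infimum over $i$ yields $g_!\cdot r\le p\cdot Tg$; uniqueness of the lift is then automatic. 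Third, the lax unit law (f) follows from naturality of $e$ (giving $p\cdot e_X=\bw_i f_i^{!}\cdot q_i\cdot e_{Y_i}\cdot f_i$), the law (f) for each $q_i$, and the inequality $\sy_X\le f_i^{!}\cdot\sy_{Y_i}\cdot f_i$ of Lemma~\ref{first rules}(2), using only the monotonicity of the sup-map $f_i^{!}$.

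The real work is the lax multiplication law (g), $\sfs_X\cdot p_!\cdot\lam_X\cdot Tp\le p\cdot m_X$. Since $p\cdot m_X=\bw_i f_i^{!}\cdot q_i\cdot Tf_i\cdot m_X$, it suffices to bound the left-hand side above by each $f_i^{!}\cdot q_i\cdot Tf_i\cdot m_X$; and because $p\le f_i^{!}\cdot q_i\cdot Tf_i=:p^{*}_i$, the monotonicity of $\lam$ (applied to the factor $Tp$) together with the monotonicity of $(-)^{\odot}$ from Lemma~\ref{first rules}(1) (applied to the factor $\sfs_X\cdot p_!$) lets me replace $p$ by $p^{*}_i$ throughout the left-hand side. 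Thus everything reduces to showing that each pulled-back structure $p^{*}_i$ satisfies $\sfs_X\cdot(p^{*}_i)_!\cdot\lam_X\cdot Tp^{*}_i\le p^{*}_i\cdot m_X$. I would prove this by passing to the adjoint transpose along $f_{i!}\dashv f_i^{!}$, using naturality of $\sfs$ (i.e.\ $f_{i!}\cdot\sfs_X=\sfs_{Y_i}\cdot(f_{i!})_!$) to move $f_{i!}$ inside, then the lax naturality (a) to commute $(Tf_i)_!$ past $\lam_X$, and finally the multiplication law (g) for $q_i$ together with naturality of $m$; the target $q_i\cdot m_{Y_i}\cdot TTf_i=q_i\cdot Tf_i\cdot m_X$ is then reached.

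The one genuine obstacle, and the reason this last computation must be organised carefully rather than term by term, is that the operation $(-)_!=\sP(-)$ is \emph{not} monotone for the pointwise order on maps into presheaf objects, since enlarging a presheaf moves it into a different summand of the supremum defining $f_!$. Hence the homomorphism inequality $f_{i!}\cdot p^{*}_i\le q_i\cdot Tf_i$ cannot be fed directly into $(-)_!$. The way around this is the identity $\phi^{\odot}=\sfs_X\cdot(\phila)_!$ recorded in the proof of Lemma~\ref{first rules}: in the inequality (g) every application of $(-)_!$ to which the homomorphism inequality or lax naturality is to be applied occurs precomposed with an $\sfs$, so may be rewritten as an instance of $(-)^{\odot}$, which \emph{is} monotone by Lemma~\ref{first rules}(1). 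Routing all such steps through $(-)^{\odot}$ is exactly what makes the chain of inequalities compose and completes the verification that $(X,p)$ is the initial lift.
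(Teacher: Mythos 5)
Your proof is correct and takes essentially the same approach as the paper: the paper's proof consists precisely of the formula $p=\bigwedge_{i\in I}f_i^{!}\cdot q_i\cdot Tf_i$ for the initial structure, with the verification left as routine. The details you supply are sound — in particular your observation that $(-)_!$ fails to be monotone and that the inequalities for law (g) must be routed through $\sfs\cdot(-)_!=(-)^{\odot}$ via Lemma~\ref{first rules} is exactly the point the paper's ``one easily proves'' elides.
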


\begin{proof}
For any family of $\lam$-algebras $(Y_i,q_i)$ and $\Set/\CQ_0$-maps $f_i:X\to Y_i\;(i\in I)$, the fixed set
 $X$ obtains its initial structure $p$ with respect to the forgetful functor $(\lam,\CQ)\text{-}{\bf Alg}\to\Set/\CQ_0$ as
$$p:=\bigwedge_{i\in I}(f_i)^!\cdot q_i\cdot Tf_i$$
which, in pointwise terms, reads as $(p{\mathfrak{x}})_x=\bigwedge_{i\in I}(q_i(Tf_i(\Fx)))_{f_ix}$, for all $x\in X, \Fx\in TX$.
\end{proof}

\section{Topological theories and maximal lax distributive laws}
In addition to the given small quantaloid $\CQ$, in this section we restrict ourselves to considering monads ${\mathbb T}$ on $\Set/\CQ_0$ that are liftings of $\Set$-monads along the forgetful $\Sigma:\Set/\CQ_0\to\Set$. The following Proposition (which remains valid when $\Set$ is replaced by an arbitrary category) states that these are completely described by Eilenberg-Moore algebra structures on $\CQ_0$, just as we have encountered them in the special case of the list monad in Example \ref{laxdistlawexamples}(3).
\begin{prop}\label{strict lifting}
Let ${\mathbb T}=(T,m,e)$ be a monad on $\Set$. Then there is a bijective correspondence between ${\mathbb T}$-algebra structures $\zeta:T\CQ_0\to\CQ_0$ and monads $\mathbb T'=(T',m',e')$ on $\Set/\CQ_0$ with
$$\Sigma T'=T\Sigma,\; \Sigma e' =e\Sigma,\; \Sigma m'=m\Sigma.$$
\end{prop}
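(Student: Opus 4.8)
The plan is to make the two constructions explicit and mutually inverse, letting the faithfulness of $\Sigma$ and the universal role of the terminal object $(\CQ_0,1_{\CQ_0})$ of $\Set/\CQ_0$ carry the argument, so that the only genuine input is the pair of algebra laws. Given $\zeta:T\CQ_0\to\CQ_0$, I first build $\mathbb T'$. Because $\Sigma T'=T\Sigma$ forces the underlying set of $T'(X,a_X)$ to be $TX$, the sole freedom is the array map $TX\to\CQ_0$, and I take it to be $\zeta\cdot Ta_X$; thus $T'(X,a_X):=(TX,\zeta\cdot Ta_X)$ and $T'f:=Tf$. Applying $T$ to $a_Y\cdot f=a_X$ shows at once that $T'f$ lies over $\CQ_0$ and that $T'$ is functorial. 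For the unit and multiplication I prescribe the underlying maps $e$ and $m$ as required; the only thing to verify is that $e'_{(X,a_X)}=e_X$ and $m'_{(X,a_X)}=m_X$ are morphisms over $\CQ_0$. Using the naturality of $e$, the first condition reduces to $\zeta\cdot e_{\CQ_0}=1_{\CQ_0}$; and using the naturality of $m$ together with the fact that the array of $T'T'(X,a_X)$ is $\zeta\cdot T\zeta\cdot TTa_X$, the second reduces to $\zeta\cdot m_{\CQ_0}=\zeta\cdot T\zeta$. These are exactly the unit and associativity laws of the $\mathbb T$-algebra $\zeta$, so the algebra structure is precisely what makes $e'$ and $m'$ available.

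Once $e'$ and $m'$ are known to lie over $\CQ_0$, everything else for $\mathbb T'$ is free: naturality of $e',m'$ and the three monad axioms are equalities of $\Set/\CQ_0$-morphisms whose images under the faithful functor $\Sigma$ are the corresponding true statements for $\mathbb T$, hence they hold in $\Set/\CQ_0$ as well. This produces a monad $\mathbb T'$ satisfying the three displayed compatibilities.

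Conversely, from any $\mathbb T'$ with $\Sigma T'=T\Sigma$, $\Sigma e'=e\Sigma$, $\Sigma m'=m\Sigma$ I recover $\zeta$ as the array map of $T'(\CQ_0,1_{\CQ_0})$, whose underlying set is $T\CQ_0$. The step I expect to need the most care is the observation that the array of $T'(X,a_X)$ is then forced to be $\zeta\cdot Ta_X$ for \emph{every} object: viewing $a_X$ as the unique morphism $(X,a_X)\to(\CQ_0,1_{\CQ_0})$, the map $T'a_X=Ta_X$ is a $\Set/\CQ_0$-morphism into $(T\CQ_0,\zeta)$, and that is precisely this identity. This observation does double duty. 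First, reading it at $(\CQ_0,1_{\CQ_0})$ and at $(T\CQ_0,\zeta)$ and imposing that $e'_{(\CQ_0,1_{\CQ_0})}$ and $m'_{(\CQ_0,1_{\CQ_0})}$ lie over $\CQ_0$ yields $\zeta\cdot e_{\CQ_0}=1_{\CQ_0}$ and $\zeta\cdot m_{\CQ_0}=\zeta\cdot T\zeta$, so the extracted $\zeta$ is indeed a $\mathbb T$-algebra. Second, since any such lifting has array maps $\zeta\cdot Ta_X$ — exactly those produced by the forward construction — the two passages are mutually inverse (and the forward construction evaluated at $(\CQ_0,1_{\CQ_0})$ returns $\zeta$, as $T1_{\CQ_0}=1_{T\CQ_0}$). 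This establishes the claimed bijection.
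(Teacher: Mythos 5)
Your proof is correct and takes essentially the same route as the paper's: both identify $\zeta$ as the array map of $T'(\CQ_0,1_{\CQ_0})$, use the terminal object of $\Set/\CQ_0$ to force the array of $T'(X,a_X)$ to be $\zeta\cdot Ta_X$, read off the $\mathbb T$-algebra laws from the requirement that $e'$ and $m'$ be morphisms over $\CQ_0$ (specialized at $X=\CQ_0$, $a=1_{\CQ_0}$), and let faithfulness of $\Sigma$ dispose of the remaining naturality and monad axioms. The only difference is expository: you run the construction from $\zeta$ to $\mathbb T'$ first, whereas the paper begins with the lifting and extracts $\zeta$.
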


\begin{proof}
For a ``$\Sigma$-lifting" ${\mathbb T}'$ of $\mathbb T$, let $\zeta$ be the array function of the $\Set/\CQ_0$-object
$T'(\CQ_0,1_{\CQ_0})$, whose domain must necessarily be $T\CQ_0$. For any $\Set/\CQ_0$-object $(X,a)$, the unique $\Set/\CQ_0$-morphism
$a:(X,a)\to (\CQ_0,1_{\CQ_0})$ to the terminal object is being mapped by $T'$ to
$$(TX,a_{TX})\to^{Ta} (T\CQ_0,\zeta),  \text{ so that }
a_{TX}=\zeta\cdot Ta\quad(*).$$
The object assignment by $T'$ is therefore uniquely determined by $\zeta$, and so is its morphism assignment, by faithfulness of $\Sigma$. Furthermore, since necessarily
$$e'_{(X,a)}=e_X:(X,a)\to(TX,\zeta\cdot Ta)\text{  and  }m'_{(X,a)}=m_X:(TTX,\zeta\cdot T\zeta\cdot TTa)\to(TX,\zeta\cdot Ta),$$
in $\Set/\CQ_0$, one has $\zeta\cdot Ta\cdot e_X=a_X$ and $\zeta\cdot Ta\cdot m_X=\zeta\cdot T\zeta\cdot TTa$ which, for $X=\CQ_0$ and $a=1_{\CQ_0}$, amount to the $\mathbb T$-algebra laws $\zeta\cdot e_{\CQ_0}=1_{\CQ_0}$ and $\zeta\cdot m_{\CQ_0}=\zeta\cdot T\zeta$.

Conversely, with $T'$ defined by $(*)$, these laws similarly give the lifting $\mathbb T'$ of $\mathbb T$ along $\Sigma$.
\end{proof}

In what follows, we will not distinguish notationally between $\mathbb T'$ and $\mathbb T$. So, we are working with a $\Set$-monad ${\mathbb T}=(T,m,e)$ and a fixed $\mathbb T$-algebra structure $\zeta:T\CQ_0\to\CQ_0$ on $\CQ_0$ that allows us to treat $\mathbb T$ as a monad on $\Set/\CQ_0$. For such $\mathbb T$ and a monotone distributive law $\lam:T\sP\to \sP T$ we consider the $\Set/\CQ_0$-maps
$$\xi:=(T\sP\CQ_0\to^{\lam_{\CQ_0}}\sP T\CQ_0\to^{\zeta_!}\sP\CQ_0)\text{ and }\theta:=(T\sP\sP\CQ_0\to^{\lam_{\sP\CQ_0}}\sP T\sP\CQ_0\to^{\xi_!}\sP\sP\CQ_0).
$$

\begin{prop}\label{concentration1}
$\xi$ and $\theta$ are lax $\mathbb T$-algebra structures on $\sP\CQ_0$ and $\sP\sP\CQ_0$, respectively, making
$\sy_{\CQ_0}:(\CQ_0,\zeta)\to(\sP\CQ_0,\xi)$ and $\nu_{\CQ_0}:(\sP\sP\CQ,\theta)\to(\sP\CQ_0,\xi)$ lax $\mathbb T$-homomorphisms, that is, producing the following laxly commuting diagrams:

$$\bfig
\qtriangle<600,500>[\sP\CQ_0`T \sP\CQ_0`\sP\CQ_0;e_{\sP\CQ_0}`1_{\sP\CQ_0}`\xi]
\square(1100,0)<700,500>[TT \sP\CQ_0`T\sP\CQ_0`T \sP\CQ_0`\sP\CQ_0;T\xi`m_{\sP\CQ_0}`\xi`\xi]
\square(2200,0)<600,500>[T \CQ_0`T \sP\CQ_0`\CQ_0`\sP\CQ_0;T \sy_{\CQ_0}`\zeta`\xi`\sy_{\CQ_0}]
\place(450,300)[\leq]
\place(1450,250)[\geq]
 \place(2500,250)[\leq]
\efig$$

$$\bfig
\qtriangle<600,500>[\sP\sP\CQ_0`T \sP\sP\CQ_0`\sP\sP\CQ_0;e_{\sP\sP\CQ_0}`1_{\sP\sP\CQ_0}`\theta]
\square(1100,0)<700,500>[TT \sP\sP\CQ_0`T\sP\sP\CQ_0`T \sP\sP\CQ_0`\sP\sP\CQ_0;T\theta`m_{\sP\sP\CQ_0}`\theta`\theta]
\square(2200,0)<600,500>[T \sP\sP\CQ_0`T \sP\CQ_0`\sP\sP\CQ_0`\sP\CQ_0;T \sfs_{\CQ_0}`\theta`\xi`\sfs_{\CQ_0}]
\place(450,300)[\leq]
\place(1450,250)[\geq]
 \place(2500,250)[\leq]
\efig$$
Moreover, $\xi$ ($\theta$) is a strict $\mathbb T$-algebra structure on $\sP\CQ_0$ ($\sP\sP\CQ_0$) if $\lam$ satisfies the lax $\mathbb T$-unit and -multiplication laws  {\rm (d)} and {\rm (e)} strictly at $\CQ_0$ (at $\sP\CQ_0$, respectively);
and $\sy_{\CQ_0}$ ($\sfs_{\CQ_0}$) is a strict $\mathbb T$-homomorphism if $\lam$ satisfies the lax ${\mathbb P}_{\CQ}$-unit law {\rm (b)} (the lax ${\mathbb P}_{\CQ}$-multiplication law {\rm (c)}, respectively) strictly at $\CQ_0$.
\end{prop}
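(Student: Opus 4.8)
The plan is to substitute the definitions $\xi=\zeta_!\cdot\lam_{\CQ_0}$ and $\theta=\xi_!\cdot\lam_{\sP\CQ_0}$ and to verify each of the six asserted inequalities directly, using as tools the functoriality and monotonicity of $(-)_!$ (Lemma~\ref{first rules}(1)), the naturality of the presheaf unit $\sy$ and multiplication $\sfs$, the $\bbT$-algebra identities $\zeta\cdot e_{\CQ_0}=1_{\CQ_0}$ and $\zeta\cdot m_{\CQ_0}=\zeta\cdot T\zeta$ from Proposition~\ref{strict lifting}, the monotonicity of $\lam$, and the defining axioms (a)--(e). The structural observation I would exploit is that $\theta$ stands to $\xi$ exactly as $\xi$ stands to $\zeta$: replacing the object $\CQ_0$ and its strict structure $\zeta$ by $\sP\CQ_0$ and the (lax) structure $\xi$ turns $\zeta_!\cdot\lam_{\CQ_0}$ into $\xi_!\cdot\lam_{\sP\CQ_0}$. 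Thus I would prove once a single general statement---for any lax $\bbT$-algebra $(A,\alpha)$ in $\Set/\CQ_0$ the map $\alpha_!\cdot\lam_A\colon T\sP A\to\sP A$ is again a lax $\bbT$-algebra structure, strictly so wherever (d) and (e) hold strictly at $A$---and then instantiate at $(\CQ_0,\zeta)$ to get the claims for $\xi$ and at $(\sP\CQ_0,\xi)$ to get those for $\theta$.

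Within that statement the unit law is the easy half: axiom (d) at $A$ gives $\alpha_!\cdot\lam_A\cdot e_{\sP A}\ge\alpha_!\cdot(e_A)_!=(\alpha\cdot e_A)_!$, and since $\alpha\cdot e_A\ge 1_A$ (for $\alpha=\zeta$ an equality) monotonicity of $(-)_!$ yields $(\alpha\cdot e_A)_!\ge 1_{\sP A}$, strictly once (d) is strict at $A$ and $\alpha$ has a strict unit. The two homomorphism laws I would treat in the same clean spirit. For $\sy_{\CQ_0}$ one combines axiom (b) at $\CQ_0$, which via monotonicity of $\zeta_!$ gives $\xi\cdot T\sy_{\CQ_0}\ge\zeta_!\cdot\sy_{T\CQ_0}$, with the naturality identity $\zeta_!\cdot\sy_{T\CQ_0}=\sy_{\CQ_0}\cdot\zeta$, obtaining $\sy_{\CQ_0}\cdot\zeta\le\xi\cdot T\sy_{\CQ_0}$, strict iff (b) is strict at $\CQ_0$. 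For $\sfs_{\CQ_0}$, writing $\xi_!=(\zeta_!)_!\cdot(\lam_{\CQ_0})_!$ and noting $(\zeta_!)_!=\sP\sP\zeta$, naturality of $\sfs$ gives $\sfs_{\CQ_0}\cdot(\zeta_!)_!=\zeta_!\cdot\sfs_{T\CQ_0}$; then axiom (c) at $\CQ_0$ together with monotonicity of $\zeta_!$ yields $\sfs_{\CQ_0}\cdot\theta\le\zeta_!\cdot\lam_{\CQ_0}\cdot T\sfs_{\CQ_0}=\xi\cdot T\sfs_{\CQ_0}$, strict iff (c) is strict at $\CQ_0$.

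The multiplication law is the heart of the matter and the step I expect to be the main obstacle. Here one must compare $\alpha_!\cdot\lam_A\cdot T(\alpha_!\cdot\lam_A)$ with $\alpha_!\cdot\lam_A\cdot m_{\sP A}$. My plan is to paste three ingredients: axiom (e) at $A$, bounding $\lam_A\cdot m_{\sP A}$ below by $(m_A)_!\cdot\lam_{TA}\cdot T\lam_A$; the associativity of $\alpha$ (for $\zeta$ the identity $\zeta_!\cdot(m_{\CQ_0})_!=\zeta_!\cdot(T\zeta)_!$, and for $\xi$ its lax associativity, used through monotonicity of $(-)_!$); and the lax naturality (a) at the structure map $\alpha$, relating $(T\alpha)_!\cdot\lam_{TA}$ and $\lam_A\cdot T\alpha_!$. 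The delicate point is that, as literally stated, (e) and (a) place the common middle term $\alpha_!\cdot(T\alpha)_!\cdot\lam_{TA}\cdot T\lam_A$ \emph{below} both $\alpha_!\cdot\lam_A\cdot T(\alpha_!\cdot\lam_A)$ and $\alpha_!\cdot\lam_A\cdot m_{\sP A}$, so on their own they do not chain into the desired inequality $\alpha_!\cdot\lam_A\cdot T(\alpha_!\cdot\lam_A)\le\alpha_!\cdot\lam_A\cdot m_{\sP A}$: one of the two $2$-cells must first be reversed.

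Reconciling these directions is exactly where I expect the real work to lie, and where the monotonicity hypothesis on $\lam$ and the adjunctions $f_!\dashv f^!$ behind Lemma~\ref{first rules} should enter, forcing the lax naturality (a) to hold \emph{strictly} at the $\Set/\CQ_0$-map $\alpha$, i.e.\ $(T\alpha)_!\cdot\lam_{TA}=\lam_A\cdot T\alpha_!$. Granting that equality, the chase closes: (e) bounds $\alpha_!\cdot\lam_A\cdot m_{\sP A}$ below by $\alpha_!\cdot(m_A)_!\cdot\lam_{TA}\cdot T\lam_A$, associativity of $\alpha$ rewrites $\alpha_!\cdot(m_A)_!$ as (or above) $\alpha_!\cdot(T\alpha)_!$, and strict (a) turns $\alpha_!\cdot(T\alpha)_!\cdot\lam_{TA}$ into $\alpha_!\cdot\lam_A\cdot T\alpha_!$, producing $\alpha_!\cdot\lam_A\cdot T(\alpha_!\cdot\lam_A)$ on the nose. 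The sole remaining inequality is then the one contributed by (e), so the law becomes an equality precisely when (e) is strict at $A$; after the two instantiations this delivers exactly the stated strictness criteria for $\xi$ (from (d),(e) at $\CQ_0$) and for $\theta$ (from (d),(e) at $\sP\CQ_0$), while the homomorphism strictness is governed by (b) and (c) at $\CQ_0$ as above.
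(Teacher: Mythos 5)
Your overall route is the same as the paper's: substitute $\xi=\zeta_!\cdot\lam_{\CQ_0}$ and $\theta=\xi_!\cdot\lam_{\sP\CQ_0}$ and verify each inequality from (a)--(e), the algebra identities for $\zeta$, naturality of $\sy$ and $\sfs$, and Lemma~\ref{first rules}. Your unit-law and homomorphism-law computations coincide with the paper's, and your packaging of the algebra laws as one lemma for an arbitrary lax algebra $(A,\alpha)$, instantiated at $(\CQ_0,\zeta)$ and $(\sP\CQ_0,\xi)$, is just the paper's ``one proceeds similarly for $\theta$'' made explicit.

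The genuine gap is the multiplication law, which you have located correctly but not closed. The paper's own chain reads
$\xi\cdot m_{\sP\CQ_0}\geq\zeta_!\cdot(T\zeta)_!\cdot\lam_{T\CQ_0}\cdot T\lam_{\CQ_0}
=\zeta_!\cdot\lam_{\CQ_0}\cdot T(\zeta_!)\cdot T\lam_{\CQ_0}=\xi\cdot T\xi$,
i.e.\ it invokes exactly the strict naturality $(T\zeta)_!\cdot\lam_{T\CQ_0}=\lam_{\CQ_0}\cdot T(\zeta_!)$ that you call for --- written as an equality with no justification. Axiom (a) at $\zeta$ gives only ``$\leq$'' there, while the direction needed to conclude $\xi\cdot T\xi\leq\xi\cdot m_{\sP\CQ_0}$ is ``$\geq$''. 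Your assertion that monotonicity of $\lam$ and the adjunctions $f_!\dashv f^!$ ``force'' this equality is not substantiated and, as far as I can see, does not hold: monotonicity of $\lam$ concerns $\lam_X\cdot Tf\leq\lam_X\cdot Tg$ for comparable maps $f\le g$ into a presheaf object and says nothing about the naturality square at $\zeta$, and the adjunction/section tricks (e.g.\ using $\zeta\cdot e_{\CQ_0}=1$) only reproduce the ``$\leq$'' one already has. So the proposal, exactly like the paper's proof, leaves the decisive inequality $\zeta_!\cdot\lam_{\CQ_0}\cdot T(\zeta_!)\cdot T\lam_{\CQ_0}\leq\zeta_!\cdot(T\zeta)_!\cdot\lam_{T\CQ_0}\cdot T\lam_{\CQ_0}$ unproved; to complete the argument you must either derive this reversed (whiskered) naturality at the structure map from the stated axioms or impose it as an additional hypothesis. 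A second, smaller defect of your general lemma: deducing $(\alpha\cdot e_A)_!\geq 1_{\sP A}$ from $\alpha\cdot e_A\geq 1_A$ by ``monotonicity of $(-)_!$'' is invalid, since Lemma~\ref{first rules}(1) makes $\phi\mapsto\phi^{\odot}$ monotone on $\CQ$-relations but $f\mapsto f^{\circ}$, hence $f\mapsto f_!$, is not monotone for the pointwise order on maps; this is harmless for $\xi$ (where $\zeta\cdot e_{\CQ_0}=1_{\CQ_0}$ holds on the nose) but is precisely what the unit law for $\theta$ requires, where only the lax inequality $\xi\cdot e_{\sP\CQ_0}\ge 1$ is available.
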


\begin{proof}
Lax unit law for $\xi$: By (d),
$\xi\cdot e_{\sP\CQ_0}\geq\zeta_!\cdot (e_{\CQ_0})_!=(\zeta\cdot e_{\CQ_0})_!=1_{\sP\CQ_0}$, with equality holding when $\lam$ satisfies (d) strictly at $\CQ_0$. Lax multiplication law for $\xi$: By (e),
 $$\xi\cdot m_{\sP\CQ_0}
  \geq\zeta_!\cdot (m_{\CQ_0})_!\cdot\lam_{T\CQ_0}\cdot T\lam_{\CQ_0}
  =\zeta_!\cdot (T\zeta)_!\cdot \lam_{T\CQ_0}\cdot T\lam_{\CQ_0}       =\zeta_!\cdot\lam_{\CQ_0}\cdot T(\zeta_!)\cdot T\lam_{\CQ_0}
  =\xi\cdot T\xi,$$
  with equality holding when $\lam$ satisfies (e) strictly at $\CQ_0$.

  One proceeds similarly for the (lax) unit and multiplication laws for $\theta$.

  Lax homomorphism law for $\sy_{\CQ_0}$: By (b),  $\xi\cdot T\sy_{\CQ_0}\geq \zeta_!\cdot\sy_{T\CQ_0}=\sy_{\CQ_0}\cdot\zeta,$  with equality holding when $\lam$ satisfies (b) strictly at $\CQ_0$. Lax homomorphism law for $\sfs_{\CQ_0}$: By (c),
  $$\xi\cdot T\sfs_{\CQ_0}\geq \zeta_!\cdot\sfs_{T\CQ_0}\cdot(\lam_{\CQ_0})_!\cdot\lam_{\sP\CQ_0}
  =\sfs_{\CQ_0}\cdot\zeta_{!!}\cdot(\lam_{\CQ_0})_!\cdot\lam_{\sP\CQ_0}
  =\sfs_{\CQ_0}\cdot\xi_!\cdot\lam_{\sP\CQ_0}=\sfs_{\CQ_0}\cdot T\sy_{\CQ_0},
   $$ with equality holding when $\lam$ satisfies (c) strictly at $\CQ_0$.
\end{proof}

\begin{rem}\label{t}
\begin{itemize}
\item[(1)] Let $t:=|\text{-}|_{\sP\CQ_0}$ denote the array map of $\sP\CQ_0$ (that assigns to a $\CQ_0$-indexed family of $\CQ$-morphisms in $\sP\CQ_0$
their common codomain). Then $|\text{-}|_{T\sP\CQ_0}=\zeta\cdot Tt$ (see ($*$) of Proposition \ref{strict lifting}), and since $\xi$ is a map over $\CQ_0$, we must have $t\cdot \xi=\zeta \cdot Tt$. In other words,
$t: (\sP\CQ_0,\xi)\to(\CQ_0,\zeta)$ is a strict $\mathbb T$-homomorphism.
\item[(2)] From $\xi=\zeta_!\cdot\lam_{\CQ_0}$ one obtains $\lam_{\CQ_0}\leq \zeta^!\cdot\xi$ by adjunction, and the lax naturality (a) of $\lam$ at $t$ then gives
$$\lam_{\sP\CQ_0}\leq(Tt)^!\cdot\lam_{\CQ_0}\cdot T(t_!)\leq (Tt)^!\cdot\zeta^!\cdot\xi\cdot T(t_!).$$
Consequently, one obtains an upper bound for $\theta$:
$$\theta=\xi_!\cdot\lam_{\sP\CQ_0}\leq\xi_!\cdot  (Tt)^!\cdot\zeta^!\cdot\xi\cdot T(t_!).$$
\end{itemize}
\end{rem}

We now embark on a converse path, by establishing a monotone distributive law from a given map $\xi$, in addition to $\zeta$, and by choosing $\theta$ maximally.

\begin{defn}\label{toptheory}
Let $\mathbb T$ be a $\Set$-monad that comes with a $\mathbb T$-algebra structure $\zeta$ on the object set $\CQ_0$
of the small quantaloid $\CQ$. A {\em topological theory} for $\mathbb T$ and $\CQ$ is a $\Set$-map $\xi: T\sP\CQ_0\to\sP\CQ_0$ satisfying the following conditions:

\begin{tabular}{lr}
&\\
0. $t\cdot\xi=\zeta\cdot Tt$ \quad\quad\quad\quad\quad\quad\quad\quad\quad\quad\quad(with $t$ as in Remark \ref{t}(1))\ & (array compatibility);\\
1. $1_{\sP\CQ_0}\leq\xi\cdot e_{\sP\CQ_0}, \;\xi\cdot T\xi\leq\xi\cdot m_{\sP\CQ_0}$ & (lax $\mathbb T$-algebra laws);\\
2. $\sy_{\CQ_0}\cdot\zeta\leq\xi\cdot T\sy_{\CQ_0}, \; \sfs_{\CQ_0}\cdot\theta\leq\xi\cdot T\sfs_{\CQ_0}$ (with $\theta=\xi_!\cdot  (\zeta\cdot Tt)^!\cdot\xi\cdot T(t_!))$& (lax $\mathbb T$-homom. laws);\\
3. $\forall f, g: Y\to\sP\CQ_0\text{ in  }\Set/\CQ_0\;(f \leq g\Rightarrow\xi\cdot Tf\leq\xi\cdot Tg)$ & (monotonicity).\\
&\\
\end{tabular}
The theory is {\em strict} if the inequality signs in conditions 1 and 2 may be replaced by equality signs.
\end{defn}

Proposition \ref{concentration1} produces for every (strict) monotone distributive law a (strict) topological theory. We will call this theory {\em induced} by the given law.

\begin{thm}\label{concentration2}
For ${\mathbb T}, \CQ, \zeta$ as in $\rm{Definition}\text{ } \ref{toptheory}$ and a topological theory $\xi$,
$$\lam_X^{\xi}:=(\zeta\cdot Ta)^!\cdot\xi\cdot T(a_!)$$
for all $X=(X,a)\in \Set/\CQ_0$ defines a monotone distributive law $\lam^{\xi}$ for $\mathbb T$ and $\CQ$. This law is largest amongst all laws that induce the given theory $\xi$.
\end{thm}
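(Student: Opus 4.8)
The plan is to exploit the factorization $(\zeta\cdot Ta)^!=(Ta)^!\cdot\zeta^!$, which lets one rewrite the defining formula as
$$\lam_X^{\xi}=(Ta)^!\cdot\bar\lam\cdot T(a_!),\qquad \bar\lam:=\lam_{\CQ_0}^{\xi}=\zeta^!\cdot\xi,$$
for every $X=(X,a)$, exhibiting $\lam^{\xi}$ as the family produced from its single value $\bar\lam$ at $\CQ_0$ by ``pulling back'' along the array maps. Every axiom will then be reduced to the corresponding condition on $\xi$ at the object $\CQ_0$, the passage between $X$ and $\CQ_0$ being carried out by the adjunctions $f_!\dashv f^!$ and the naturality of the monad data. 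Array compatibility (that each $\lam_X^{\xi}$ is a map over $\CQ_0$) follows since $(\zeta\cdot Ta)^!$ preserves arrays and $t\cdot\xi=\zeta\cdot Tt$ (condition~0), together with $t\cdot a_!=|\text{-}|_{\sP X}$; monotonicity is immediate from condition~3, because $a_!$ preserves the pointwise order and $(\zeta\cdot Ta)^!$ is monotone. The maximality claim fits here as well: any monotone distributive law $\lam$ inducing $\xi$ satisfies $\xi=\zeta_!\cdot\lam_{\CQ_0}$, hence $\lam_{\CQ_0}\le\zeta^!\cdot\xi=\bar\lam$ by adjunction, and lax naturality of $\lam$ at $a$ gives $(Ta)_!\cdot\lam_X\le\lam_{\CQ_0}\cdot T(a_!)$, whence $\lam_X\le(Ta)^!\cdot\lam_{\CQ_0}\cdot T(a_!)\le\lam_X^{\xi}$.

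First I would dispatch the three unary laws. For lax naturality (a), writing $a=b\cdot f$ for $f\colon(X,a)\to(Y,b)$ so that $(\zeta\cdot Ta)^!=(Tf)^!\cdot(\zeta\cdot Tb)^!$, the counit $(Tf)_!\cdot(Tf)^!\le\id$ collapses the left composite onto the right one. Law (b) reduces, after pushing $a_!\cdot\sy_X=\sy_{\CQ_0}\cdot a$ through, to the $\sy$-homomorphism inequality of condition~2 combined with the unit estimate $\sy_{TX}\le(\zeta\cdot Ta)^!\cdot\sy_{\CQ_0}\cdot(\zeta\cdot Ta)$ from Lemma~\ref{first rules}(2). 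Dually, law (d) follows from the unit half of condition~1, the identity $(\zeta\cdot Ta)\cdot e_X=a$ (from naturality of $e$ and $\zeta\cdot e_{\CQ_0}=1_{\CQ_0}$), and the adjunction unit $\id\le(\zeta\cdot Ta)^!\cdot(\zeta\cdot Ta)_!$.

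The two multiplication laws (c) and (e) are the substance of the argument, and I expect (c) to be the main obstacle because of the iterated presheaf functor. For the $\mathbb T$-law (e), writing $a_2=\zeta\cdot Ta$ and $a_3=\zeta\cdot Ta_2=a_2\cdot m_X$, I would expand $\lam_{TX}^{\xi}\cdot T\lam_X^{\xi}$ into $a_3^!\cdot\xi\cdot T\big((a_2)_!\cdot a_2^!\cdot\xi\big)\cdot TT(a_!)$, bound the inner factor by $\xi$ via the counit $(a_2)_!\cdot a_2^!\le\id$ and then \emph{monotonicity} (condition~3) to pass this estimate through $\xi\cdot T(-)$, apply the multiplication half of condition~1, and finish using naturality of $m$ and the counit $(m_X)_!\cdot(m_X)^!\le\id$ to reach $\lam_X^{\xi}\cdot m_{\sP X}$.

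Law (c) is heavier. Substituting the pullback forms of $\lam_X^{\xi}$ and $\lam_{\sP X}^{\xi}$ and using $t_X:=|\text{-}|_{\sP X}=t\cdot a_!$ (so $Tt_X=Tt\cdot T(a_!)$ and $(t_X)_!=t_!\cdot(a_!)_!$), the middle factor $(T(a_!))_!\cdot(Tt_X)^!$ collapses to $(Tt)^!$ via the counit. Sliding $\sfs$ leftward past $((Ta)^!)_!$ and $(\zeta^!)_!$ by the equality half of Lemma~\ref{first rules}(2), and matching the trailing $T((a_!)_!)$ on both sides through naturality of $\sfs$ (which gives $a_!\cdot\sfs_X=\sfs_{\CQ_0}\cdot(a_!)_!$), the whole inequality reduces to the purely $\CQ_0$-level statement $\zeta^!\cdot\sfs_{\CQ_0}\cdot\theta\le\zeta^!\cdot\xi\cdot T\sfs_{\CQ_0}$, in which one recognizes $\theta=\xi_!\cdot(Tt)^!\cdot\bar\lam\cdot T(t_!)$. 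This is exactly the $\sfs$-homomorphism inequality of condition~2 postcomposed with $\zeta^!$. The delicate points throughout are keeping the numerous $(-)_!$ and $(-)^!$ straight and recognizing the reduced $\CQ_0$-expression as the very $\theta$ appearing in condition~2; once the pullback reformulation is in hand, no single step requires more than an adjunction (co)unit, a naturality square, Lemma~\ref{first rules}, or one of the four defining conditions on $\xi$.
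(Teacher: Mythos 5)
Your proposal follows essentially the same route as the paper's proof: each of the laws (a)--(e) and monotonicity is reduced, via the factorization $(\zeta\cdot Ta)^!=(Ta)^!\cdot\zeta^!$, the adjunctions $(-)_!\dashv(-)^!$, naturality of $\sy,\sfs,e,m$, and Lemma \ref{first rules}(2), to the corresponding condition on $\xi$ at the object $\CQ_0$; your treatments of (c) and (e) in particular reproduce the paper's computations step for step (the paper just reads them in the opposite direction), and your maximality argument --- adjoint transposition of $\zeta_!\cdot\kappa_{\CQ_0}=\xi$ followed by lax naturality of $\kappa$ at $a$ and the unit of $(Ta)_!\dashv(Ta)^!$ --- is exactly the paper's. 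The one step you omit is the verification that $\lam^{\xi}$ itself \emph{induces} $\xi$, which is needed for it to be ``largest amongst all laws that induce $\xi$'' rather than merely an upper bound for that collection: since $\zeta$ is a split epimorphism (with section $e_{\CQ_0}$), one has $\zeta_!\cdot\zeta^!=1_{\sP\CQ_0}$, hence $\zeta_!\cdot\lam^{\xi}_{\CQ_0}=\zeta_!\cdot\zeta^!\cdot\xi=\xi$. With that one line added, your argument is complete.
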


\begin{proof}
We check monotonicity of $\lam=\lam^{\xi}$ and each of the conditions (a)-(e), considering $f:(X,a)\to (Y,b)$ in ${\bf Set}/\CQ_0$. Note that $c:= \zeta\cdot Ta$ is the array function of $TX$.
With $t$ the array function of $\sP\CQ_0$ (see Remark \label{t}(1)), an easy inspection shows that $s:=t\cdot a_!$ is the array function of $\sP X$.

Monotonicity: For $g, h: Y\rto \sP X$ in $\Set/\CQ_0$, monotonicity of $\xi$ gives
$$\lam_X\cdot Tg=c^!\cdot\xi\cdot T(a_!\cdot g)\leq c^!\cdot\xi\cdot T(a_!\cdot h)=\lam_X\cdot Th.$$

(a) With the adjunction $(Tf)_!\dashv(Tf)^!$, from $b\cdot f=a$ one obtains $(Tf)_!\cdot (Ta)^!\leq(Tb)^!$. Hence,
$$(Tf)_!\cdot\lam_X=(Tf)^!\cdot(Ta)^!\cdot\zeta^!\cdot\xi\cdot T(a_!)\leq(Tb)^!\cdot\zeta^!\cdot\xi\cdot T(b_!)\cdot T(f_!)=\lam_Y\cdot T(f_!).$$

(b) Condition 2 for a lax topological theory and Lemma \ref{first rules}(2) give
$$\lam_X\cdot T\sy_X=c^!\cdot\xi\cdot T(a_!)\cdot T\sy_X=
c^!\cdot\xi\cdot T\sy_{\CQ_0}\cdot Ta\geq c^!\cdot\sy_{\CQ_0}\cdot c\geq\sy_{TX}.$$

(c) The adjunction $(T(a_!))_!\dashv(T(a_!))^!$ gives $(T(a_!))_!\cdot(Ts)^!\leq(Tt)^!$. Hence,  with condition 2 for a lax topological theory and Lemma \ref{first rules}(2) one obtains
\begin{align*}
\lam_X\cdot T\sfs_X&=c^!\cdot\xi\cdot T(a_!)\cdot T\sfs_X=c^!\cdot\xi\cdot T\sfs_{\CQ_0}\cdot T(a_{!!})\\
&\geq c^!\cdot\sfs_{\CQ_0}\cdot\theta\cdot T(a_{!!})=\sfs_{TX}\cdot (c^!)_!\cdot\theta\cdot T(a_{!!})\\
&=\sfs_{TX}\cdot(c^!)_!\cdot\xi_!\cdot(Tt)^!\cdot\zeta^!\cdot\xi\cdot T(t_!)\cdot T(a_{!!})\\
&\geq\sfs_{TX}\cdot (c^!)_!\cdot\xi_!\cdot(T(a_!))_!\cdot(Ts)^!\cdot\zeta^!\cdot\xi\cdot T(s_!)\\
&=\sfs_{TX}\cdot (c^!)_!\cdot\xi_!\cdot(T(a_!))_!\cdot(\zeta\cdot Ts)^!\cdot\xi\cdot T(s_!)\quad=\sfs_{TX}\cdot(\lam_X)_!\cdot\lam_{\sP X}.
\end{align*}

(d) From $\zeta\cdot e_{\CQ_0}=1_{\CQ_0}$ one obtains $(e_{\CQ_0})_!\leq\zeta^!$ by adjunction. Together with condition 3 for a lax topological theory, this gives
\begin{align*}
\lam_X\cdot e_{\sP X}&=c^!\cdot\xi\cdot T(a_!)\cdot e_{\sP X}=c^!\cdot\xi\cdot e_{\sP\CQ_0}\cdot a_!\\
&\geq(\zeta\cdot Ta)^!\cdot a_!\geq (Ta)^!\cdot(e_{\CQ_0})_!\cdot a_!=(Ta)^!\cdot (Ta)_!\cdot(e_X)_!\quad\geq (e_X)_!.
\end{align*}

(e) With $d:=\zeta\cdot Tc$ the array function of $TTX$, from $c\cdot m_X=d$ one obtains $(m_X)_!\cdot d^!\leq c^!$ by adjunction, so that condition 3 for a topological theory gives

\begin{align*}
\lam_X\cdot m_{\sP X}&=c^!\cdot\xi\cdot T(a_!)\cdot m_{\sP X}=c^!\cdot\xi\cdot m_{\sP\CQ_0}\cdot TT(a_!)\\
&\geq c^!\cdot\xi\cdot T\xi\cdot TT(a_!)\geq (m_X)_!\cdot d^!\cdot\xi\cdot T\xi\cdot TT(a_!)\\
&\geq(m_X)_!\cdot d^!\cdot \xi\cdot T(c_!)\cdot T(c^!)\cdot T(\xi)\cdot TT(a_!)\quad=(m_X)_!\cdot\lam_{TX}\cdot T\lam_X.
\end{align*}

Next we show that the topological theory $\xi'$ induced by $\lam=\lam^{\xi}$ equals $\xi$. Indeed, since $\zeta$
is surjective, one has $\zeta_{\circ}\circ\zeta^{\circ}=1_{T\CQ_0}$ and therefore
$$\xi'=\zeta_!\cdot\lam_{\CQ_0}=\zeta_!\cdot\zeta^!\cdot\xi=(\zeta_{\circ}\circ\zeta^{\circ})^{\odot}\cdot\xi=\xi.$$

Finally, let $\kappa:T\sP\to\sP T$ be any monotone distributive law inducing $\xi$, so that $\zeta_!\cdot\kappa_{\CQ_0}=\xi$. Then
$$\lam_X=c^!\cdot\xi\cdot T(a_!)=(Ta)^!\cdot\zeta^!\cdot\zeta_!\cdot\kappa_{\CQ_0}\cdot T(a_!)\geq(Ta)^!\cdot\kappa_{\CQ_0}\cdot T(a_!)=(Ta)^!\cdot T(a_!)\cdot\kappa_X\geq\kappa_X.$$
\end{proof}

\begin{rem}
(1) When stated in pointwise terms, the definition of $\lam=\lam^{\xi}$ reads as
$$(\lam_X\Fz)_{\Fx}=(\xi\cdot T(a_!)(\Fz))_{\zeta\cdot Ta(\Fx)},$$
for all $X=(X,a)\in\Set/\CQ_0,\Fx \in TX, \Fz\in T\sP X$.

(2) {\em For a topological theory $\xi$, the structure $\theta$ as in} Definition \ref{toptheory} {\em always satisfies the lax $\mathbb T$-unit and -multiplication laws of} Proposition \ref{concentration1}, since $\xi$ is induced by the monotone distributive law $\lam^{\xi}$.
\end{rem}

\begin{cor}\label{Galois}
For a quantaloid $\CQ$ and a $\Set$-monad $\mathbb T$ that comes equipped with a $\mathbb T$-algebra structure $\zeta$ on the set of objects of $\CQ$, the assignments
$$(\xi\mapsto \lam^{\xi})\text{   and   }(\lam\mapsto \xi^{\lam}:=\zeta_!\cdot\lam_{\CQ_0})$$
define an adjunction between the ordered set of topological theories for $\mathbb T$ and $\CQ$ and the conglomerate of monotone distributive laws $T\sP_{\CQ}\to\sP_{\CQ}T$, ordered componentwise.
\end{cor}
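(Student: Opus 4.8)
The plan is to present the two assignments as an adjoint pair of monotone maps between posets, with $\lam\mapsto\xi^{\lam}$ as the left adjoint and $\xi\mapsto\lam^{\xi}$ as the right adjoint. Writing $F(\lam):=\xi^{\lam}=\zeta_!\cdot\lam_{\CQ_0}$ and $G(\xi):=\lam^{\xi}$, I would first record that both are well defined: $G$ lands in monotone distributive laws by Theorem \ref{concentration2}, while $F$ lands in topological theories, since $\xi^{\lam}$ is precisely the theory \emph{induced} by $\lam$ in the sense of the discussion following Proposition \ref{concentration1} (in particular condition 2 of Definition \ref{toptheory}, with the maximal $\theta$, is part of what that discussion asserts). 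The whole argument then rests on the two facts already contained in Theorem \ref{concentration2}: the equality $\xi^{\lam^{\xi}}=\xi$ (so $FG=\id$), established there via $\zeta_!\cdot\zeta^!=1_{\sP\CQ_0}$; and the maximality clause, which says that any monotone law inducing a given theory lies below the associated $\lam^{\xi}$. Applying the latter to the theory $\xi^{\lam}$, which $\lam$ induces by definition, yields $\lam\leq\lam^{\xi^{\lam}}$, i.e. $\id\leq GF$.

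Next I would check that $F$ and $G$ are order-preserving. For $G$, if $\xi\leq\xi'$ then, componentwise, $\lam_X^{\xi}=(\zeta\cdot Ta)^!\cdot\xi\cdot T(a_!)\leq(\zeta\cdot Ta)^!\cdot\xi'\cdot T(a_!)=\lam_X^{\xi'}$ for every $X=(X,a)$, because every $f^!$ preserves suprema, hence order, and precomposition with $T(a_!)$ respects the pointwise order on $\Set/\CQ_0$-maps into $\sP\CQ_0$. For $F$, if $\lam\leq\kappa$ then $\lam_{\CQ_0}\leq\kappa_{\CQ_0}$, whence $\xi^{\lam}=\zeta_!\cdot\lam_{\CQ_0}\leq\zeta_!\cdot\kappa_{\CQ_0}=\xi^{\kappa}$ by monotonicity of $\zeta_!$. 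Thus both $F$ and $G$ are monotone.

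Finally I would assemble these ingredients. The adjunction $F\dashv G$ amounts to the equivalence $\xi^{\lam}\leq\xi\iff\lam\leq\lam^{\xi}$, valid for every law $\lam$ and every theory $\xi$, which I would prove directly:
$$\xi^{\lam}\leq\xi\ \Rightarrow\ \lam\leq\lam^{\xi^{\lam}}\leq\lam^{\xi},\qquad \lam\leq\lam^{\xi}\ \Rightarrow\ \xi^{\lam}\leq\xi^{\lam^{\xi}}=\xi,$$
the first implication using the unit inequality together with monotonicity of $G$, the second using monotonicity of $F$ together with the counit equality. (Equivalently, $F$ and $G$ are monotone and satisfy $\id\leq GF$ and $FG=\id$, which is the standard poset criterion for $F\dashv G$.) As a bonus, $FG=\id$ shows that $G=\lam^{(-)}$ is a full order-embedding, identifying the topological theories with the \emph{maximal} laws, i.e. the fixed points $\lam=\lam^{\xi^{\lam}}$ of the closure operator $GF$.

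I expect no deep obstacle: Theorem \ref{concentration2} already supplies both the unit inequality (through maximality) and the counit equality, so the Corollary is essentially the formal repackaging of these facts into a Galois correspondence. The only points demanding care are fixing the direction of the adjunction and verifying monotonicity of the two assignments; the subtler well-definedness issue — that $\xi^{\lam}$ is a genuine topological theory in the sense of Definition \ref{toptheory}, including condition 2 with the maximal $\theta$ — is handled by the induced-theory discussion following Proposition \ref{concentration1}, which I would simply invoke rather than re-derive.
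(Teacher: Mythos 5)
Your proposal is correct and follows exactly the route the paper intends: the Corollary is stated without a separate proof because the counit equality $\xi^{\lam^{\xi}}=\xi$ and the unit inequality $\lam\leq\lam^{\xi^{\lam}}$ (via the maximality clause) are both supplied by Theorem \ref{concentration2}, and well-definedness of $\lam\mapsto\xi^{\lam}$ is the ``induced theory'' assertion following Proposition \ref{concentration1}. Your additional monotonicity checks and the explicit derivation of the equivalence $\xi^{\lam}\leq\xi\iff\lam\leq\lam^{\xi}$ are exactly the routine details the paper leaves implicit.
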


\begin{defn}\label{maxlaw}
A monotone distributive law $\lam$ is {\em maximal} if it is closed under the correspondence of Corollary \ref{Galois}, that is, if it is induced by some topological theory or, equivalently, by
$\xi^{\lam}$. More explicitly then, $\lam$ is maximal if, and only if, for all $X=(X,a)\in \Set/\CQ_0$,
$$\lam_X=(Ta)^!\cdot\zeta^!\cdot\zeta_!\cdot\lam_{\CQ_0}\cdot T(a_!).$$
Note that this condition simplifies to $\lam_X=(Ta)^!\cdot\lam_{\CQ_0}\cdot T(a_!)$ when $\zeta$ is bijective.
\end{defn}

\begin{cor}\label{bijectiveGalois}
Maximal monotone distributive laws correspond bijectively to topological theories.
\end{cor}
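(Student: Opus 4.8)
The plan is to recognize this as the standard fact that a Galois adjunction cuts down to a bijection between the closed (fixed) elements on either side, so that essentially all of the work has already been discharged in Theorem \ref{concentration2} and nothing computational remains. Concretely, Corollary \ref{Galois} furnishes the two monotone assignments $L\colon\xi\mapsto\lam^{\xi}$ and $R\colon\lam\mapsto\xi^{\lam}=\zeta_!\cdot\lam_{\CQ_0}$ between the ordered set of topological theories and the (ordered) conglomerate of monotone distributive laws. By Definition \ref{maxlaw}, a law $\lam$ is maximal precisely when it is fixed by the composite $LR$, i.e.\ when $\lam=\lam^{\xi^{\lam}}$. Thus the assertion to prove is exactly that $L$ and $R$ restrict to mutually inverse bijections between topological theories and the $LR$-fixed (maximal) laws.

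First I would verify that $L$ and $R$ land in the right places. For any topological theory $\xi$, the law $\lam^{\xi}$ is by construction induced by $\xi$, hence maximal by Definition \ref{maxlaw}; so $L$ maps theories to maximal laws. Conversely, for any monotone distributive law $\lam$, Proposition \ref{concentration1} guarantees that the induced datum $\xi^{\lam}=\zeta_!\cdot\lam_{\CQ_0}$ satisfies the lax $\mathbb T$-algebra and lax $\mathbb T$-homomorphism inequalities, while array compatibility and monotonicity come for free; hence $\xi^{\lam}$ is a genuine topological theory in the sense of Definition \ref{toptheory}, and $R$ maps (in particular maximal) laws to theories.

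Next I would check the two round-trips. That $RL=\mathrm{id}$ on topological theories is precisely the identity $\xi^{\lam^{\xi}}=\xi$ already established inside the proof of Theorem \ref{concentration2} (using surjectivity of $\zeta$, via $\zeta_!\cdot\zeta^!\cdot\xi=\xi$). That $LR=\mathrm{id}$ on maximal laws is, by the explicit formula in Definition \ref{maxlaw}, nothing but the defining property of maximality: $\lam=\lam^{\xi^{\lam}}$. Combining these, $L$ and $R$ are mutually inverse on the respective fixed classes, which is the desired bijection.

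The argument is therefore formal once Theorem \ref{concentration2} is in hand, and I expect no real obstacle beyond bookkeeping: the only points requiring care are the well-definedness of the two restricted maps (that their outputs are again objects of the intended kind) and the observation that, since topological theories form a mere ordered set whereas distributive laws form a conglomerate, ``bijection'' should be read as the existence of mutually inverse assignments rather than as an equinumerosity of sets---a subtlety that the inverse pair $(L,R)$ resolves automatically.
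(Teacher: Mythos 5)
Your proposal is correct and follows exactly the route the paper intends: the corollary is the standard restriction of the Galois adjunction of Corollary \ref{Galois} to its closed elements, with $\xi^{\lam^{\xi}}=\xi$ supplied by the proof of Theorem \ref{concentration2} and $\lam=\lam^{\xi^{\lam}}$ being the definition of maximality. The paper leaves this unproved as immediate, and your bookkeeping (well-definedness of the two restricted assignments and the two round-trips) is precisely the content being elided.
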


\begin{exmp}\label{inducedxiexamples}
\begin{itemize}
\item[(1)] For $\mathbb T$ and $\lam$ identical (as in Example \ref{laxdistlawexamples}(1)), with $\zeta=1_{\CQ_0}$ also the induced map $\xi=1_{\sP\CQ_0}$ is identical, but the maximal law $\lam^{\xi}$ associated with it (by Theorem \ref{concentration2}) is not; for a set $X$ with array function $|\text{-}|:X\to\CQ_0$ one has
$$\lam^{\xi}_X:\sP_{\CQ}X\to\sP_{\CQ}X, \;(\lam_X^{\xi}\sigma)_y=\bigvee\{\sigma_x\;|\;x\in X, |x|=|y|\},$$
for all $\sigma\in\sP X, y\in X.$
\item[(2)] For ${\mathbb T}={\mathbb L}$ and the strict distributive law $\otimes$ of Example \ref{laxdistlawexamples}(2), the induced map $\xi:L\sV\to\sV$ with $ (v_1,...,v_n)\mapsto v_1\otimes ...\otimes v_n$ is in fact the Eilenberg-Moore structure of the monoid $(\sV,\otimes,{\sf k})$.
The maximal law $\lam^{\xi}_X:L(\sV^X)\to\sV^{LX}$ maps $(\sigma_1,...,\sigma_n )$ to the map $LX\to\sV$
with constant value $\bigvee\{\sigma_1(z_1)\otimes ...\otimes
\sigma_n(z_n)\;|\;z_1,...,z_n\in X\}$, for every set $X$.
\item[(3)] For $\CQ={\sf D}\sV$ with $\sV$ divisible, $ {\mathbb T}={\mathbb L}$, and the distributive law and the map $\zeta:L\sV\to \sV$
as in Example \ref{laxdistlawexamples}(3) (which coincides with the map $\xi$ of (2) above), the now induced map
$\xi:L(\sP\sV)\to\sP\sV=\sP_{\CQ}\sV$ is given by
$$(\xi(\sigma^1,...,\sigma^n)) _u=\bigvee_{v_1\otimes...\otimes v_n=u}\sigma_{v_1}^1\otimes ...\otimes\sigma_{v_n}^n:\quad u\to/~>/|\sigma^1|\otimes...\otimes |\sigma^n|,$$
for all $\sigma^1,...,\sigma^n\in \sP\sV, u\in\sV$.
\item[(4)] The map $\xi:\sP\sV\to\sV$ induced by the law $\delta$ of Example \ref{laxdistlawexamples}(4) has constant value $\top$.
\item[(5)] The map $\xi: U\sV\to \sV$ induced by the ultrafilter monad and the law $\beta$ as in Example \ref{laxdistlawexamples}(5) is given by
$$\xi(\Fz)=\bigwedge_{C\in \Fz}\bigvee C\quad\quad(=\bigvee_{C\in\Fz}\bigwedge C,\text{   if }\sV\text{ is completely distributive}),$$
for every ulltrafilter $\Fz$ on $\sV$; it plays a central role in \cite{Hofmann}.

\end{itemize}
\end{exmp}

While typically maximal monotone distributive laws are rather special and often allow only for trivial $\lam$-algebras, especially when $\CQ$ is a quantale (see Remark \ref{final remark}(2)), they do lead to interesting categories $(\lam,\CQ)\text{-}{\bf Alg}$
 when $\CQ$ is a multi-object quantaloid, including the case when $\CQ={\sf DV}$ for a quantale $\sV$. We can mention here only
the easiest case.

\begin{exmp}\label{arrayinvariant}
Consider the maximal law $\lam=\lam^{\xi}$ induced by the identity map $\xi=1_{\sP\CQ_0}$ of Example \ref{inducedxiexamples}(1), for any quantaloid $\CQ$ and $\mathbb T$ the identity monad on $\Set/\CQ_0$. Writing
$a(x,y):=(py)_x$ for $x,y\in X$ and a lax $\lam$-algebra structure $p:X\to\sP X$ on a set $X$ with array map $|\text{-}|:X\to\CQ_0$, conditions (f), (g) of Definition \ref{laxalg} translate to
$$1_{|x|}\leq a(x,x)\text{   and   }(|y|=|y'|\Longrightarrow a(y',z)\circ a(x,y)\leq a(x,z))$$
for all $x,y,y',z\in X$. Since in particular $a(y,y)\circ a(x,x)\leq a(x,y)$ whenever $|x|=|y|$, these conditions are equivalent to
$$(|x|=|y|\Longrightarrow 1_{|x|}\leq a(x,y))\text{   and   }a(y,z)\circ a(x,y)\leq a(x,z)$$
for all $x,y,z\in X$. Consequently then, $(\lam,\CQ)\text{-}{\bf Alg}$ can be seen as the full subcategory of $\CQ\text{-}{\Cat}$ containing those $\CQ$-categories $(X,a)$ satisfying $1_{|x|}\leq a(x,y))$ for all $x,y$ with the same array. In the case of
$\CQ={\sf D}[0,\infty]$ (see Example \ref{secondexamples}(2)), this is the full subcategory of {\bf ParMet} of those partial metric spaces $(X,a)$ satisfying the array-invariance condition
$$a(x,x)=a(y,y)\Longrightarrow a(x,y)=a(x,x)$$
for all $x,y\in X$.

\end{exmp}

\section{Lax distributive laws of ${\mathbb T}$ over $\mathbb P_{\CQ}$ versus lax extensions of $\mathbb T$ to $\CQ\text{-}{\bf Rel}$}

In this section we give a precise account of the bijective correspondence between monotone distributive laws of $\mathbb T$ over ${\mathbb P}_{\CQ}$ and so-called lax extensions of $\mathbb T$ to $\CQ\text{-}{\bf Rel}$, {\em i.e.}, to the Kleisli category of ${\mathbb P}_{\CQ}$, where $\mathbb T$ is now again an arbitrary monad of $\Set/\CQ_0$, {\em i.e.}, not necessarily a lifting of a $\Set$-monad as in Section 5.

\begin{rem}\label{rules}
For future reference, we give a list of identities that will be used frequently in what follows. In part they have already been used in Section 3, and they all follow from the discrete presheaf adjunction that induces ${\mathbb P}_{\CQ}$. For morphisms $\varphi:X\rto Y,\;\psi:Y\rto Z$ in $\CQ\text{-}{\bf Rel}$ and $f:X\to Y, \;g:X\to Z,\;h:Z\to Y$ in $\Set/\CQ_0$ one has:
\begin{itemize}
\item[\rm{(1)}] $\phila=\varphi^\odot\cdot\sy_Y,\quad \varphi=\phila^{\circ}\circ\varepsilon_X,\quad\phi^{\odot}=\sfs_X\cdot\phila_!,\quad(\phi^{\odot})^{\circ}=\varepsilon_Y\circ\phi;$
\item[\rm{(2)}] $\overleftarrow{\psi\circ\phi}=\phi^{\odot}\cdot\overleftarrow{\psi},\quad g_!\cdot\phila=\overleftarrow{\varphi\cdot g^{\circ}},\quad\overleftarrow{h^{\circ}\circ\phi}=\phila\cdot h;$
\item[\rm{(3)}] $\overleftarrow{f^{\circ}}=\sy_Y\cdot f=f_!\cdot \sy_X,\quad\overleftarrow{1_X^{\circ}}=\sy_X,\quad 1_X^{\circ}=\sy_X^{\circ}\circ\varepsilon_X, \quad\overleftarrow{\varepsilon_X}=1_{\sP X}.$
\end{itemize}
\end{rem}

 In what follows, we analyze which of the inequalities required for lax extensions and distributive laws correspond to each other, starting with the most general scenario.
Hence, initially we consider mere families $\lambda_X:T\sP X\to \sP TX\;(X\in \Set/\CQ_0)$ of maps in $\Set/\CQ_0$, which we will call $(\bbT,\CQ)$-{\em distribution families}, and contrast them with families $\hT\phi:T X\rto T Y\;(\phi:X\rto Y\text{ in }\CQ\text{-}{\bf Rel})$, which we refer to as $(\bbT,\CQ)$-{\em extension families}. Certainly, a distribution family $\lambda=(\lambda_X)_X$ determines an extension family
$$\Phi(\lambda)=\hT=(\hT \phi)_\phi\text{  with  }
\overleftarrow{\hT\phi}:=\lambda_X\cdot T\overleftarrow{\phi},$$
also visualized by
$$\begin{array}{ccc}
(\phi:X\rto Y) & \mapsto & (\hT\phi:T X\rto T Y)\\
(\phila:Y\to\sP X) & \mapsto & \bfig\Vtriangle/->`->`<-/<300,300>[T Y`\sP T X`T\sP
X.;\overleftarrow{\hT\phi}`T\phila`\lam_X]\efig
\end{array}$$
We see immediately that we may retrieve $(\lambda_X)_X$ from $(\hT \phi)_{\phi}$, by choosing $\phi$ such that $\overleftarrow{\phi}=1_{\sP X}$,
 which is the case precisely when $\phi = \varepsilon_X:X\rto \sP X$ (the co-unit of the adjunction presented in Section 3). Hence, when assigning to any extension family $\hT=(\hT\phi)_{\phi}$ the distribution family

 $$\Psi (\hT)=\lambda=(\lambda_X)_X\text{   with   }\lambda_X:=\overleftarrow{\hT\varepsilon_X},$$
 we certainly have $\Psi\Phi(\lambda)=\lambda$ for all distribution families $\lambda$. The following Proposition clarifies which extension families correspond bijectively to distribution families. We call an extension family $\hT$ {\em monotone} if it satisfies
 $$\forall\phi,\phi':X\rto Y\quad(\phi\leq\phi'{}\Lra{}\hT\phi\leq \hT\phi'),$$
and monotonicity of a lax distribution family is defined as monotonicity for a lax distributive law in Section 4.

 \begin{prop}\label{corr}
 $\Phi$ and $\Psi$ establish a bijective correspondence between all $(\bbT,\CQ)$-distribution families and those $(\bbT,\CQ)$-extension families $\hT=(\hT\phi)_{\phi}$ which satisfy the {\rm left-op-whiskering} condition

 $${\rm(0)}\quad\hT(h^{\circ}\circ\phi)=(T h)^{\circ}\circ\hT\phi$$
 for all $\phi:X\rto Y\text{ in }\CQ\text{-}{\bf Rel}, h:Z\to Y\text{ in }\Set/\CQ_0$. The correspondence restricts to a bijective correspondence between the conglomerate $(\bbT,\CQ)\text{-}\rm{DIS}$ of all monotone distribution families and the conglomerate $(\bbT,\CQ)\text{-}\rm{EXT}$ of all monotone extension families satisfying $\rm{(0)}$.
  \end{prop}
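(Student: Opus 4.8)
Since the identity $\Psi\Phi(\lambda)=\lambda$ for all distribution families $\lambda$ is already in hand, the plan is to complete the unrestricted bijection by establishing two facts: that $\Phi(\lambda)$ always satisfies the left-op-whiskering condition $(0)$, and that $\Phi\Psi(\hT)=\hT$ for every extension family $\hT$ satisfying $(0)$; afterwards I would check that $\Phi$ and $\Psi$ preserve monotonicity. All of this I would carry out under the order isomorphism $\phi\mapsto\phila$ of Section 3, which lets me test every equality or inequality of $\CQ$-relations after applying $\overleftarrow{(-)}$, and I would invoke only the identities collected in Remark \ref{rules}.

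First I would verify that $\hT:=\Phi(\lambda)$ satisfies $(0)$ for an arbitrary distribution family $\lambda$. For $\phi:X\rto Y$ and $h:Z\to Y$, the third identity of Remark \ref{rules}(2) gives $\overleftarrow{h^{\circ}\circ\phi}=\phila\cdot h$, whence by the definition of $\Phi$
$$\overleftarrow{\hT(h^{\circ}\circ\phi)}=\lambda_X\cdot T\overleftarrow{h^{\circ}\circ\phi}=\lambda_X\cdot T\phila\cdot Th=\overleftarrow{\hT\phi}\cdot Th.$$
A second application of Remark \ref{rules}(2) yields $\overleftarrow{(Th)^{\circ}\circ\hT\phi}=\overleftarrow{\hT\phi}\cdot Th$, so both sides of $(0)$ agree after $\overleftarrow{(-)}$; injectivity of $\overleftarrow{(-)}$ then gives $(0)$.

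The crux of the argument, and the step I expect to be the main obstacle, is $\Phi\Psi(\hT)=\hT$ for $\hT$ satisfying $(0)$. This is exactly where $(0)$ is indispensable: the family $\lambda:=\Psi(\hT)$, with $\lambda_X=\overleftarrow{\hT\varepsilon_X}$, records only the single value $\hT\varepsilon_X$, and $(0)$ is what permits recovering all of $\hT$ from it. The key idea is to express an arbitrary $\phi:X\rto Y$ through the counit by means of the identity $\phi=\phila^{\circ}\circ\varepsilon_X$ of Remark \ref{rules}(1). Applying $(0)$ with $\varepsilon_X:X\rto\sP X$ in the role of $\phi$ and $h=\phila:Y\to\sP X$ gives $\hT\phi=(T\phila)^{\circ}\circ\hT\varepsilon_X$, and then Remark \ref{rules}(2) delivers
$$\overleftarrow{\hT\phi}=\overleftarrow{(T\phila)^{\circ}\circ\hT\varepsilon_X}=\overleftarrow{\hT\varepsilon_X}\cdot T\phila=\lambda_X\cdot T\phila=\overleftarrow{\Phi(\lambda)\phi}.$$
By injectivity of $\overleftarrow{(-)}$ this proves $\hT\phi=\Phi(\lambda)\phi$ for all $\phi$, hence $\Phi\Psi(\hT)=\hT$, which together with the first two steps yields the bijection between distribution families and extension families satisfying $(0)$.

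Finally I would treat the monotone restriction. If $\lambda$ is monotone and $\phi\leq\phi'$, then $\phila\leq\overleftarrow{\phi'}$ by the order isomorphism, so monotonicity of $\lambda$ applied to $f=\phila$, $g=\overleftarrow{\phi'}$ gives
$$\overleftarrow{\hT\phi}=\lambda_X\cdot T\phila\leq\lambda_X\cdot T\overleftarrow{\phi'}=\overleftarrow{\hT\phi'},$$
so $\Phi(\lambda)$ is monotone. Conversely, for a monotone $\hT$ satisfying $(0)$ and maps $f\leq g:Y\to\sP X$, I would consider the relations $f^{\circ}\circ\varepsilon_X$ and $g^{\circ}\circ\varepsilon_X$, which by Remark \ref{rules}(2),(3) satisfy $\overleftarrow{f^{\circ}\circ\varepsilon_X}=\overleftarrow{\varepsilon_X}\cdot f=f\leq g=\overleftarrow{g^{\circ}\circ\varepsilon_X}$, hence $f^{\circ}\circ\varepsilon_X\leq g^{\circ}\circ\varepsilon_X$. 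Applying $(0)$ with $h=f$ (respectively $h=g$) and then monotonicity of $\hT$ gives
$$\lambda_X\cdot Tf=\overleftarrow{\hT(f^{\circ}\circ\varepsilon_X)}\leq\overleftarrow{\hT(g^{\circ}\circ\varepsilon_X)}=\lambda_X\cdot Tg,$$
so $\Psi(\hT)$ is monotone. Since $\Phi$ and $\Psi$ are already mutually inverse on the larger classes, these two preservation statements show they restrict to the asserted bijection between $(\bbT,\CQ)\text{-}\mathrm{DIS}$ and $(\bbT,\CQ)\text{-}\mathrm{EXT}$.
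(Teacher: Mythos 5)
Your proposal is correct and follows essentially the same route as the paper: both rest on verifying (0) for $\Phi(\lambda)$ via $\overleftarrow{h^{\circ}\circ\phi}=\phila\cdot h$, recovering $\hT$ from $\Psi(\hT)$ through the decomposition $\phi=\phila^{\circ}\circ\varepsilon_X$ together with condition (0), and transferring monotonicity in each direction by the same substitutions. Your middle step merely streamlines the paper's computation (which unwinds $\phila=\phi^{\odot}\cdot\sy_Y$ through the $(-)^{\odot}$ formalism) by applying (0) directly and then taking $\overleftarrow{(-)}$, which is an equivalent bookkeeping choice rather than a different argument.
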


  \begin{proof}
  For a distribution family $\lambda$ and $\hT:=\Phi(\lambda)$, let us first verify the identity (0), using the definition
  of $\hT$ and Remark \ref{rules}(2):

 $$\overleftarrow{(Th)^{\circ}\circ\hT\phi}
  =\overleftarrow{\hT\phi}\cdot Th
  =\lam_X\cdot T\phila\cdot Th
  =\lam_X\cdot T(\overleftarrow{h^{\circ}\circ\phi})
  =\overleftarrow{\hT(h^{\circ}\circ\phi)}.$$
  Monotonicity of $\hT$ follows trivially from the corresponding property of $\lam$.

  Next, for any extension family $\hT$ satisfying (0), we must show $\Phi\Psi (\hT)=\hT$. Indeed,
  with $\lam:=\Psi(\hT)$, the definition of $\Phi(\lam)$ and Remark \ref{rules}(1) give

  \begin{tabular}{ll}
  &\\
  $\overleftarrow{(\Phi\Psi(\hT))\phi}$ & $=\lam_X\cdot T\phila
  =\overleftarrow{\hT\varepsilon_X}\cdot T\phila
  =(\hT\varepsilon_X)^{\odot}\cdot\sy_{T\sP X}\cdot T\phila$\\
  & $=(\hT\varepsilon_X)^{\odot}\cdot((T\phila)^{\circ})^{\odot}\cdot \sy_{TY}
  =(\hT(\phila^{\circ}\circ\varepsilon_X))^{\odot}\cdot\sy_{TY}
  =(\hT\phi)^{\odot}\cdot\sy_{TY}=\overleftarrow{\hT\phi}$.\\
  &\\
  \end{tabular}

That monotonicity of  $\lam$ follows from the monotonicity of $\hT$ and (0) is clear once one has observed that
$$\lam_X\cdot Tf=\overleftarrow{\hT\varepsilon_X}\cdot Tf=\overleftarrow{(Tf)^{\circ}\cdot\hT\varepsilon_X}=
\overleftarrow{\hT(f^{\circ}\cdot\varepsilon_X)}$$
for all $f:Y\to\sP X$  in  $\Set/\CQ_0$.
  \end{proof}

 Before pursuing the bijective correspondence further, let us contrast condition (0) with some other natural conditions for an extension family, as follows.

 \begin{prop}\label{extfamrules}
 Let the monotone extension family $\hT$ satisfy $\hT\psi\circ\hT\phi\leq\hT(\psi\circ\phi)$ for all $\phi,\psi\in\CQ\text{-}\bf{Rel}$. Then the following conditions are equivalent when quantified over the variables occurring in them (with maps $f:X\to Y, \;h:Z\to Y$ over $\CQ_0$):
 \begin{itemize}
 \item[\rm{(i)}] $1_{TX}^{\circ}\leq\hT(1_X^{\circ}), \quad\hT(h^{\circ}\circ\phi)=(Th)^{\circ}\circ\hT\phi$;
 \item[\rm{(ii)}] $1_{TX}^{\circ}\leq\hT(1_X^{\circ}), \quad\hT(\psi\circ f_{\circ})=\hT\psi\circ(Tf)_{\circ}$;
 \item[\rm{(iii)}] $(Tf)^{\circ}\leq\hT(f^{\circ}), \quad(Tf)_{\circ}\leq\hT(f_{\circ})$.
  \end{itemize}
 \end{prop}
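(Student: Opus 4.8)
The plan is to treat condition (iii) as a symmetric hub and to prove (i)$\Leftrightarrow$(iii) and (ii)$\Leftrightarrow$(iii), the arguments for (i) and (ii) being mirror images of one another. The essential tool throughout is the adjunction $f_\circ\dashv f^\circ$ in $\QRel$ recorded in Section 3 (so $1_X^\circ\leq f^\circ\circ f_\circ$ and $f_\circ\circ f^\circ\leq 1_Y^\circ$ for $f:X\to Y$), together with its whiskered consequences: since adjoint $1$-cells remain adjoint after whiskering, left composition gives $h_\circ\circ\chi\leq\psi\iff\chi\leq h^\circ\circ\psi$, while right composition gives $\alpha\circ f^\circ\leq\beta\iff\alpha\leq\beta\circ f_\circ$. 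As $Tf:TX\to TY$ is again a map over $\CQ_0$, the same package applies to $(Tf)_\circ\dashv(Tf)^\circ$. I would use monotonicity and the sub-multiplicativity hypothesis $\hT\psi\circ\hT\phi\leq\hT(\psi\circ\phi)$ freely.

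First I would show (iii)$\Rightarrow$(i). Putting $f=1_X$ in $(Tf)^\circ\leq\hT(f^\circ)$ and using $T1_X=1_{TX}$ yields the unit inequality $1_{TX}^\circ\leq\hT(1_X^\circ)$ at once. For the whiskering equality, the direction $(Th)^\circ\circ\hT\phi\leq\hT(h^\circ\circ\phi)$ is the easy one: from $(Th)^\circ\leq\hT(h^\circ)$ it follows by monotonicity and sub-multiplicativity. The reverse inequality $\hT(h^\circ\circ\phi)\leq(Th)^\circ\circ\hT\phi$ is the crux; by the left-composition adjunction it is equivalent to $(Th)_\circ\circ\hT(h^\circ\circ\phi)\leq\hT\phi$, and here I would invoke the \emph{other} half of (iii), namely $(Th)_\circ\leq\hT(h_\circ)$, to bound the left-hand side by $\hT(h_\circ)\circ\hT(h^\circ\circ\phi)\leq\hT(h_\circ\circ h^\circ\circ\phi)$, and then collapse $h_\circ\circ h^\circ\leq 1_Y^\circ$ by the counit and monotonicity. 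The implication (iii)$\Rightarrow$(ii) is entirely parallel, now using the right-composition adjunction to reduce $\hT(\psi\circ f_\circ)\leq\hT\psi\circ(Tf)_\circ$ to $\hT(\psi\circ f_\circ)\circ(Tf)^\circ\leq\hT\psi$, then applying $(Tf)^\circ\leq\hT(f^\circ)$, sub-multiplicativity, and the counit $f_\circ\circ f^\circ\leq 1_Y^\circ$.

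For the converses I would read the two halves of (iii) off the whiskering laws by specializing one argument to an identity relation. Assuming (i): taking $h=f$ and $\phi=1_X^\circ$ gives $\hT(f^\circ)=(Tf)^\circ\circ\hT(1_X^\circ)\geq(Tf)^\circ$ via the unit part of (i), which is $(Tf)^\circ\leq\hT(f^\circ)$; and feeding the unit $1_X^\circ\leq f^\circ\circ f_\circ$ through the whiskering law $\hT(f^\circ\circ f_\circ)=(Tf)^\circ\circ\hT(f_\circ)$ yields $1_{TX}^\circ\leq(Tf)^\circ\circ\hT(f_\circ)$, which by the left-composition adjunction is exactly $(Tf)_\circ\leq\hT(f_\circ)$. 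The passage (ii)$\Rightarrow$(iii) is the symmetric counterpart, obtaining $(Tf)_\circ\leq\hT(f_\circ)$ from $1_Y^\circ\circ f_\circ=f_\circ$ and the unit part of (ii), and $(Tf)^\circ\leq\hT(f^\circ)$ from $1_X^\circ\leq f^\circ\circ f_\circ$, the whiskering law of (ii), and the right-composition adjunction.

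The main obstacle I anticipate is the two ``hard'' inequalities in (iii)$\Rightarrow$(i) and (iii)$\Rightarrow$(ii): neither follows from sub-multiplicativity alone, and each genuinely requires converting the goal through the $(Tf)_\circ\dashv(Tf)^\circ$ adjunction so that \emph{both} inequalities of (iii) together with the relevant counit can be brought to bear. Keeping track of domains and codomains of the $\CQ$-relations while whiskering, so that the adjunction is applied on the correct side, is the only real bookkeeping hazard; notably, the pointwise identities of Remark \ref{rules} are not needed for this purely relational argument.
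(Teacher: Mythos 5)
Your proposal is correct and follows essentially the same route as the paper: the hard half of each whiskering equality is obtained from the adjunction $(Th)_{\circ}\dashv(Th)^{\circ}$ together with both inequalities of (iii), sub-multiplicativity and the counit $h_{\circ}\circ h^{\circ}\leq 1_Y^{\circ}$, and the two halves of (iii) are read off by specializing the whiskering laws to identity relations and transposing the unit $1_X^{\circ}\leq f^{\circ}\circ f_{\circ}$ --- exactly as in the paper, which merely organizes the equivalences as (i)$\Leftrightarrow$(iii) and (i)$\Leftrightarrow$(ii) instead of routing everything through the hub (iii). The only blemish is an index slip: for $f:X\to Y$ one must take $\phi=1_Y^{\circ}$ (not $1_X^{\circ}$) to realize $f^{\circ}=f^{\circ}\circ 1_Y^{\circ}$ as a left-op-whiskered composite.
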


 \begin{proof}
$\rm{(i)}\Rightarrow\rm{(iii)}$ The hypotheses, the adjunction $f_{\circ}\dashv f^{\circ}$, and the  monotonicity give
$$1_{TX}^{\circ}\leq\hT(1_X^{\circ})\leq\hT(f^{\circ}\circ f_{\circ})=(Tf)^{\circ}\circ\hT(f_{\circ}),$$
so that $(Tf)_{\circ}\leq \hT(f_{\circ})$ follows with the adjunction $(Tf)_{\circ} \dashv (Tf)^{\circ}$. Furthermore,
$$(Tf)^{\circ}=(Tf)^{\circ}\circ 1_{TY}\leq(Tf)^{\circ}\circ\hT(1_Y^{\circ})=\hT(f^{\circ}\circ 1_Y^{\circ})=\hT(f^{\circ}).$$

$\rm{(iii)}\Rightarrow\rm{(i)}$ One uses (iii) and the general hypotheses on $\hT$ to obtain:
\begin{align*}
(Th)^{\circ}\circ\hT\phi&\leq\hT(h^{\circ})\circ\hT\phi\leq\hT(h^{\circ}\circ\phi)\\
&\leq(Th)^{\circ}\circ(Th)_{\circ}\circ\hT(h^{\circ}\circ\phi)\leq(Th)^{\circ}\circ\hT(h_{\circ})\circ\hT(h^{\circ}\circ\phi)\\
&\leq(Th)^{\circ}\circ\hT(h_{\circ}\circ h^{\circ}\circ\phi)\leq(Th)^{\circ}\circ\hT\phi.
\end{align*}

$\rm{(i)}\Leftrightarrow\rm{(ii)}$: One proceeds analogously to $\rm{(i)}\Leftrightarrow\rm{(iii)}$.
 \end{proof}

 In what follows we compare the conditions on  $\lambda\in (\bbT,\CQ)\text{-}\rm{DIS}$ encountered in Section 4 with some relevant conditions on the related family $\hT\in (\bbT,\CQ)\text{-}\rm{EXT}$ under the
   correspondence of Proposition \rm{\ref{corr}}, so that $\hT = \Phi(\lambda), \lambda = \Psi(\hT),$
   all to be read as quantified over all new variables $(\phi:X\rto Y, \psi:Y\rto Z, f:X\to Y, g:Y\to X)$ occurring in them.

\begin{tabular}{p{0.5cm}p{6cm}p{0.3cm}p{5.2cm}}
(a) & \hskip 0.5cm $\bfig\square(800,0)<700,500>[T\sP X`T\sP Y`\sP TX`\sP TY;T(f_!)`\lam_X `\lam_Y`(Tf)_!]
\place(1150,280)[\leq]
\efig$ & (1) & $\hT\psi\circ (Tg)^{\circ}\leq \hT(\psi\circ g^{\circ})$
\end{tabular}

\

\begin{tabular}{p{0.5cm}p{6cm}p{0.3cm}p{5.2cm}}
(b) & \hskip 0.8cm $\bfig
\Atriangle<300,300>[TX`T\sP X`\sP TX;T\sy_X`\sy_{TX}`\lam_X]
\place(300,120)[\geq]
\efig$ & (2) & $1_{TX}^{\circ}\leq\hT(1_X^{\circ})$\\
& & (2') & $(Tf)^{\circ}\leq \hT(f^{\circ})$
\end{tabular}

\begin{tabular}{p{0.5cm}p{6cm}p{0.3cm}p{5.2cm}}
(c) & $\bfig
\square/`->`->`->/<1000,300>[T\sP\sP X`\sP\sP T X`T\sP X`\sP T X;`T\sfs_X`\sfs_{T X}`\lam_X]
\morphism(0,300)<500,0>[T\sP\sP X`\sP T\sP X;\lam_{\sP X}]
\morphism(500,300)<500,0>[\sP T\sP X`\sP\sP T X;(\lam_X)_!]
\place(500,130)[\geq]
\efig$ & (3) & $\hT\psi\circ\hT\phi\leq\hT(\psi\circ\phi)$\\
& & (3') & $(\hT\phi)^{\odot}\cdot \overleftarrow{\hT\varepsilon_Y}
\leq \overleftarrow{\hT\varepsilon_X}
\cdot T\phi^{\odot}$
\end{tabular}

\begin{tabular}{p{0.5cm}p{6cm}p{0.3cm}p{5.2cm}}
(d) & \hskip 0.8cm $\bfig
\Atriangle<300,300>[\sP X`T\sP X`\sP TX;e_{\sP X}`(e_X)_!`\lam_X]
\place(300,120)[\geq] \efig$ & (4) & $\phi\circ e_X^{\circ}\leq e_Y^{\circ}\circ\hT\phi$
\end{tabular}

\begin{tabular}{p{0.5cm}p{6cm}p{0.3cm}p{5.2cm}}
(e) & $\bfig
\square/`->`->`->/<1000,300>[TT\sP X`\sP TT X`T\sP X`\sP T X;`m_{\sP X}`(m_X)_!`\lam_X]
\morphism(0,300)<500,0>[TT\sP X`T\sP T X;T\lam_X]
\morphism(500,300)<500,0>[T\sP T X`\sP TTX;\lam_{T X}]
\place(500,130)[\geq]
\efig$ & (5) & $\hT\hT\phi\circ m_X^{\circ}\le m_Y^{\circ}\circ\hT\phi$
\end{tabular}

   \begin{prop}\label{morecorr}
  Let $\lambda\in (\bbT,\CQ)\text{-}\rm{DIS}$ and $\hT\in (\bbT,\CQ)\text{-}\rm{EXT}$ be related under the
   correspondence of Proposition {\rm{\ref{correspondence}}}, so that $\hT = \Phi(\lambda), \lambda = \Psi(\hT)$.
   Then:
   $$\rm{(a)}\Leftrightarrow (1),\;\rm{(b)}\Leftrightarrow(2)\Leftrightarrow(2'),\;\rm{(a)\&(c)}\Rightarrow(3)\Leftrightarrow{(3')}\Rightarrow(c),\;(2')\&(3)\Rightarrow(a),\;\rm{(d)}\Leftrightarrow(4),\;\rm{(e)}\Leftrightarrow(5),$$
   and in each of these implications or equivalences one may replace the inequality sign by an equality sign on both sides of the implication or equivalence sign.
 \end{prop}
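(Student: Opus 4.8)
The plan is to transpose every relational inequality in (1)--(5) into an inequality of $\Set/\CQ_0$-maps via the order isomorphism $\phi\mapsto\phila$ of Section 3, and then to compare the result with (a)--(e) using the identities of Remark \ref{rules} together with the defining formulas $\overleftarrow{\hT\phi}=\lam_X\cdot T\phila$ (for $\hT=\Phi(\lam)$) and $\lam_X=\overleftarrow{\hT\varepsilon_X}$ (for $\lam=\Psi(\hT)$). The recurring mechanism is that, once transposed, a relational inequality carries a trailing factor $T\phila$ (or $TT\phila$ in the case involving $m$); specializing $\phi=\varepsilon_X$, so that $\phila=1_{\sP X}$, deletes this factor and leaves exactly the distributive-law inequality, while conversely pre-composing the distributive-law inequality with $T\phila$ recovers the general relational statement. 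Since the transpose, the operations $(-)_!$, and $\sfs$ are all monotone, every such passage preserves order; read with equalities throughout it simultaneously yields the asserted strict versions.

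For the equivalences settled by a single transposition I would argue as follows. In (a)$\Leftrightarrow$(1), applying $\overleftarrow{\psi\circ g^\circ}=g_!\cdot\overleftarrow\psi$ at the $T$-level turns (1) into $(Tg)_!\cdot\lam_Y\cdot T\overleftarrow\psi\le\lam_X\cdot T(g_!)\cdot T\overleftarrow\psi$, whose $\psi=\varepsilon_Y$ instance is (a). In (b)$\Leftrightarrow$(2), Remark \ref{rules}(3) gives $\overleftarrow{1_{TX}^\circ}=\sy_{TX}$ and $\overleftarrow{\hT(1_X^\circ)}=\lam_X\cdot T\sy_X$, so (2) transposes verbatim to (b). For (2)$\Leftrightarrow$(2'), the implication (2')$\Rightarrow$(2) is the case $f=1_X$, and for the converse I would use the left-op-whiskering identity (0) of Proposition \ref{corr}: writing $f^\circ=f^\circ\circ1_Y^\circ$ gives $\hT(f^\circ)=(Tf)^\circ\circ\hT(1_Y^\circ)\ge(Tf)^\circ\circ1_{TY}^\circ=(Tf)^\circ$. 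Finally (d)$\Leftrightarrow$(4) and (e)$\Leftrightarrow$(5) run along the same lines, transposing one side with $\overleftarrow{\psi\circ g^\circ}=g_!\cdot\overleftarrow\psi$ and the other with $\overleftarrow{h^\circ\circ\rho}=\overleftarrow\rho\cdot h$, then invoking naturality of $e$ (resp.\ $m$) to rewrite $T\phila\cdot e_Y=e_{\sP X}\cdot\phila$ (resp.\ $T\phila\cdot m_Y=m_{\sP X}\cdot TT\phila$) before the specialization $\phi=\varepsilon_X$.

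The genuine work sits in the cluster around (c) and (3). I would first obtain (3)$\Leftrightarrow$(3') by transposing (3) with $\overleftarrow{\psi\circ\phi}=\phi^\odot\cdot\overleftarrow\psi$, producing $(\hT\phi)^\odot\cdot\lam_Y\cdot T\overleftarrow\psi\le\lam_X\cdot T(\phi^\odot)\cdot T\overleftarrow\psi$, of which (3') is the $\psi=\varepsilon_Y$ instance. Then (3')$\Rightarrow$(c) is the instance $\phi=\varepsilon_X$, using $(\hT\varepsilon_X)^\odot=\sfs_{TX}\cdot(\lam_X)_!$ and $\varepsilon_X^\odot=\sfs_X$ from Remark \ref{rules}(1). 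For (a)\,\&\,(c)$\Rightarrow$(3') I would expand $(\hT\phi)^\odot=\sfs_{TX}\cdot(\lam_X)_!\cdot(T\phila)_!$, apply (a) at $f=\phila$ to get $(T\phila)_!\cdot\lam_Y\le\lam_{\sP X}\cdot T((\phila)_!)$, then apply (c) in the form $\sfs_{TX}\cdot(\lam_X)_!\cdot\lam_{\sP X}\le\lam_X\cdot T\sfs_X$, and finally re-collect $T\sfs_X\cdot T((\phila)_!)=T\phi^\odot$ via $\phi^\odot=\sfs_X\cdot(\phila)_!$. For (2')\,\&\,(3)$\Rightarrow$(a) I would pass through (1): taking (3) at $\phi=g^\circ$ and prepending $(Tg)^\circ\le\hT(g^\circ)$ of (2') under the monotone composite $\hT\psi\circ(-)$ gives $\hT\psi\circ(Tg)^\circ\le\hT\psi\circ\hT(g^\circ)\le\hT(\psi\circ g^\circ)$, which is (1), hence (a).

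I expect the main obstacle to be (a)\,\&\,(c)$\Rightarrow$(3'): it is the one place where two primitive conditions must be combined, and it requires threading the naturality inequality (a) through the outer monotone factor $\sfs_{TX}\cdot(\lam_X)_!$ without reversing either $\le$, while recognizing $\sfs_X\cdot(\phila)_!$ as $\phi^\odot$ so that the right-hand side collapses to $\lam_X\cdot T\phi^\odot$. The equality addendum then comes for free: each reduction above is a chain of honest equalities apart from the single spot where the pertinent lax law enters, so declaring that law strict converts the corresponding $\le$ into $=$ throughout.
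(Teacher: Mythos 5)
Your proposal is correct and follows essentially the same route as the paper's proof: transpose each relational inequality via $\phi\mapsto\phila$ using Remark \ref{rules}, recover the distributive-law condition by specializing at $\varepsilon$, and recover the general relational statement by pre-composing with $T\phila$ (resp.\ $TT\phila$), with the equality claims read off from the same chains. The only departures are cosmetic: you run the chain for (a)\,\&\,(c)$\Rightarrow$(3$'$) from the left-hand side (applying (a) before (c)) where the paper runs it from the right, and you derive (2)$\Rightarrow$(2$'$) from the whiskering identity (0) where the paper proves (b)$\Rightarrow$(2$'$) directly.
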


 \begin{proof}
 (a)$\Rightarrow$(1): The hypothesis (a) and Remark \ref{rules} give
 $$\overleftarrow{\hT\psi\circ(Tg)^{\circ}}=(Tg)_!\cdot\overleftarrow{\hT\psi}=(Tg)_!\cdot\lam_X\cdot T\overleftarrow{\psi}\leq\lam_Z\cdot T(g_!\cdot\overleftarrow{\psi})=\lam_Z\cdot T((g^{\circ})^{\odot}\cdot\overleftarrow{\psi})=\overleftarrow{\hT(\psi\circ g^{\circ})},$$
with equality holding when equality holds in (a).

(1)$\Rightarrow$(a): The hypotheses (0), (1), the naturality of $\varepsilon$ and the repeated application of Remark \ref{rules} give the inequality (a), with equality holding when equality holds in (1):
\begin{align*}
(Tf)_!\cdot\lambda_X&=((Tf)^{\circ})^{\odot}\cdot\overleftarrow{\hT\varepsilon_X}=\overleftarrow{\hT\varepsilon_X\cdot(Tf)^{\circ}}\\
&\leq\overleftarrow{\hT(\varepsilon_X\circ f^{\circ})}=
\overleftarrow{\hT((f_!)^{\circ}\circ\varepsilon_Y}=\overleftarrow{(T(f_!))^{\circ}\circ\hT\varepsilon_Y}\\
& =\overleftarrow{\hT\varepsilon_Y}\cdot T(f_!)=\lam_Y\cdot T(f_!).
\end{align*}

(b)$\Rightarrow(2')$:
$\quad\quad\; \overleftarrow{\hT(f^{\circ})}=\lam_Y\cdot T\overleftarrow{f^{\circ}}=\lam_Y\cdot T\sy_Y\cdot Tf\geq\sy_{TY}\cdot Tf=\overleftarrow{(Tf)^{\circ}}.$

$(2')\Rightarrow$(2)$\Rightarrow$(b): Consider $f=1_X$ and use the same steps as in (b)$\Rightarrow(2')$.
Trivially then, equality holds in (b) if, and only if, equality holds in (2), or $(2')$.

\quad

(a)\&(c)$\Rightarrow(3')$: With $\lambda:=\Psi (\hT)$, inequality $(3')$ follows from (a) and (c) and Remark \ref{rules}, with equality holding if it holds in both (a) and (c), as follows:
\begin{align*}
\lam_X\cdot T(\phi^{\odot})&=\lam_X\cdot T\sfs_X \cdot T(\phila_!)\\
&\geq\sfs_{TX}\cdot (\lam_X)_!\cdot \lam_{\sP Y}\cdot T(\phila_!)\\
&\geq\sfs_{TX}\cdot(\lam_X)_!\cdot(T\phila)_!\cdot\lam_Y\\
&=\sfs_{TX}\cdot(\overleftarrow{\hT\phi})_!\cdot\lam_Y=(\hT\phi)^{\odot}\cdot\lam_Y.
\end{align*}

$(3')\Rightarrow$(c): Inequality (c) follows when one puts $\phi=\varepsilon_X$ in $(3')$, with equality holding when it holds in $(3')$:
$$\lam_X\cdot T\sfs_X=\lam_X\cdot T(\varepsilon^{\odot}_X)\geq(\hT\varepsilon_X)^{\odot}\cdot\lam_{\sP X}
=\sfs_{TX}\cdot(\overleftarrow{\hT\varepsilon_X})_!\cdot\lam_{\sP X}
=\sfs_{TX}\cdot(\lam_X)_!\cdot\lam_{\sP X}.$$

$(3')\Rightarrow$(3): With $\lam_X=\overleftarrow{\hT\varepsilon_X}$ one obtains (3) from (3') and Remark \ref{rules}, as follows:
$$\overleftarrow{\hT\psi\circ\hT\phi}=(\hT\phi)^{\odot}\cdot\overleftarrow{\hT\psi}
=(\hT\phi)^{\odot}\cdot\lam_Y\cdot T\overleftarrow{\psi}\leq\lam_X\cdot T\phi^{\odot}\cdot T\overleftarrow{\psi}=\lam_X\cdot T(\overleftarrow{\psi\circ\phi})=\overleftarrow{\hT(\psi\circ\phi)}.$$

(3)$\Rightarrow(3')$: One exploits the naturality of $\varepsilon$ and (3) (putting $\psi=\varepsilon_Y)$ to obtain:
$$(\hT\phi)^{\odot}\cdot\overleftarrow{\hT\varepsilon_Y}=\overleftarrow{\hT\varepsilon_Y\circ\hT\phi}
\leq\overleftarrow{\hT(\varepsilon_Y\circ\phi)}
=\overleftarrow{\hT((\phi^{\odot})^{\circ}\circ\varepsilon_X)}
=\overleftarrow{(T\phi^{\odot})^{\circ}\circ\hT\varepsilon_X}
=\overleftarrow{\hT\varepsilon_X}\cdot T\phi^{\odot},$$

with equality holding precisely when equality holds in (3).

\quad

$(2')$\&(3)$\Rightarrow$(1): $\hT\phi\circ(Tg)^{\circ}\leq\hT\phi\circ\hT(g^{\circ})\leq\hT(\phi\circ g^{\circ}).$

\quad

(d)$\iff$(4): We show $``\Rightarrow"$; the implication $``\Leftarrow"$ follows similarly, with $\phi=\varepsilon_X$:
$$\overleftarrow{\phi\circ e_X^{\circ}}=(e_X)_!\cdot\phila\leq\lam_X\cdot e_{\sP X}\cdot\phila=
\lam_X\cdot T\phila\cdot e_Y=\overleftarrow{\hT\phi}\cdot e_Y=(\hT\phi)^{\odot}\cdot\overleftarrow{e_Y^{\circ}}
=\overleftarrow{e_Y^{\circ}\circ\hT\phi}.$$

(e)$\iff$(5): Since again $``\Leftarrow"$ follows by putting $\phi=\varepsilon_X$, we show only $``\Rightarrow"$:
\begin{align*}
\overleftarrow{\hT\hT\phi\circ m_X^{\circ}}&
=(m_X)_!\cdot\overleftarrow{\hT\hT\phi}=(m_X)_!\cdot\lam_{TX}\cdot T\overleftarrow{(\hT\phi)}=(m_X)_!\cdot\lam_{TX}\cdot T\lam_X\cdot TT\phila\\
&\leq\lam_X\cdot m_{\sP X}\cdot TT\phila=\lam_X\cdot T\phila\cdot m_Y=\overleftarrow{\hT\phi}\cdot m_Y=(\hT\phi)^{\odot}\cdot\overleftarrow{m_Y^{\circ}}=\overleftarrow{m_Y^{\circ}\circ\hT\phi}.
\end{align*}
\end{proof}

  A {\em lax extension} $\hT$ of the monad $\bbT$ to $\CQ\text{-}\bf{Rel}$ is
 a monotone $(\bbT,\CQ)$-extension family satisfying conditions (0), (2)-(5) for all $\phi:X\rto Y, \psi:Y\rto Z$ in $\CQ\text{-}\bf{Rel}$ and $h:Z\to Y$ in $\Set/\CQ_0$, {\em i.e.}, a left-whiskering lax functor $\hT:\CQ\text{-}\bf{Rel}\to\CQ\text{-}\bf{Rel}$ that coincides with $T$ on objects and makes $e^{\circ}:\hT\rto 1_{\Set/\CQ_0}$ and $m^{\circ}:\hT\rto\hT\hT$ lax natural transformations. We have proved in Propositions \ref{corr}, \ref{extfamrules} and \ref{morecorr} the following theorem (which corrects and generalizes Exercise III.1.I in \cite{MonTop}):

 \begin{thm}\label{correspondence}
 There is a bijective correspondence between the monotone distributive laws of the monad $\bbT$ over $\bbP_{\CQ}$ and the lax extensions $\hat{T}$ of $\bbT$ to $\CQ\text{-}\bf{Rel}$. These lax extensions are equivalently described as monotone $(\bbT,\CQ)$-extension families $\hT$ satisfying the following inequalities (for all $f,\phi,\psi$ as above):
 \begin{itemize}
 \item[\rm{1.}] $(Tf)_{\circ}\leq \hT(f_{\circ}),$
 \item[\rm{2.}] $(Tf)^{\circ}\leq \hT(f^{\circ}),$
 \item[\rm{3.}]$(=\rm{(3)})\;\; \hT\psi\circ\hT\phi\leq\hT(\psi\circ\phi),$
 \item[\rm{4.}] $(e_Y)_{\circ}\circ\phi\leq\hT\phi\circ (e_X)_{\circ},$
 \item[\rm{5.}] $(m_Y)_{\circ}\circ\hT\hT\phi\leq\hT\phi\circ(m_X)_{\circ}.$
 \end{itemize}
 \end{thm}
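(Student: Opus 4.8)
The statement simply collects Propositions~\ref{corr}, \ref{extfamrules} and \ref{morecorr}, so the plan is to assemble them while tracking, under the correspondence $\hT=\Phi(\lambda)$, $\lambda=\Psi(\hT)$ of Proposition~\ref{corr}, exactly which inequality forces which. For the bijection itself, Proposition~\ref{corr} already matches monotone $(\bbT,\CQ)$-distribution families with monotone extension families satisfying the left-op-whiskering condition~(0); it therefore remains only to see that the distributive-law axioms (a)--(e) match the lax-extension axioms (0),(2)--(5) across the two sides. Starting from a monotone distributive law $\lambda$, the family $\hT=\Phi(\lambda)$ satisfies~(0) automatically, being in the image of $\Phi$, while Proposition~\ref{morecorr} converts (b)$\Rightarrow$(2), (d)$\Rightarrow$(4), (e)$\Rightarrow$(5) and (a)\&(c)$\Rightarrow$(3); hence $\hT$ is a lax extension. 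Conversely, from a lax extension $\hT$ with $\lambda=\Psi(\hT)$, Proposition~\ref{morecorr} yields (2)$\Rightarrow$(b), (3)$\Rightarrow$(c), (4)$\Rightarrow$(d) and (5)$\Rightarrow$(e), while the remaining axiom~(a) follows from $(2')$\&(3)$\Rightarrow$(a) once one passes from~(2) to $(2')$ via (2)~$\Leftrightarrow$~$(2')$. Thus $\lambda$ obeys (a)--(e), and since $\Phi$ and $\Psi$ are mutually inverse the correspondence is bijective.

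For the reformulation by the five listed inequalities I would argue entirely on the $\hT$-side, keeping the listed inequality~3 fixed (it coincides with (3)) and invoking the equivalence (i)$\Leftrightarrow$(iii) of Proposition~\ref{extfamrules}, whose standing hypotheses are precisely monotonicity and~(3). That equivalence lets one trade the pair (0),(2) for the pair $(Tf)^\circ\le\hT(f^\circ)$, $(Tf)_\circ\le\hT(f_\circ)$, i.e.\ for the listed inequalities~2 and~1. It then remains to identify~(4) with the listed inequality~4 and~(5) with the listed inequality~5. These are mere mates under the graph--cograph adjunctions $(e_X)_\circ\dashv e_X^\circ$, $(e_Y)_\circ\dashv e_Y^\circ$ (and likewise for $m$): from~(4) one whiskers on the left by $(e_Y)_\circ$ and on the right by $(e_X)_\circ$, using the counit $(e_Y)_\circ\circ e_Y^\circ\le 1_{TY}$ and the unit $1_X\le e_X^\circ\circ(e_X)_\circ$, to reach the listed~4; the reverse passage and the $m$-case are symmetric. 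Assembling, a monotone extension family is a lax extension if and only if it satisfies the five listed inequalities.

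The only delicate point is the converse half of the bijection, where axiom~(a) is \emph{not} recovered from~(c) alone but only from the conjunction $(2')$\&(3); since the implications of Proposition~\ref{morecorr} relating (a),(c) to~(3) are genuinely one-directional, they must be applied in the order (2),(3)~$\leadsto$~$(2')$,(3)~$\leadsto$~(a), so that no circular appeal to (a) creeps in. The passage between (4),(5) and the listed~4,5 is a mate computation---the one step not already packaged in the three propositions---but it is routine, resting solely on the graph--cograph adjunctions available for every map of $\CQ\text{-}{\bf Rel}$.
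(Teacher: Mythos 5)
Your proposal is correct and follows the paper's own route: the paper likewise proves the theorem simply by assembling Propositions~\ref{corr}, \ref{extfamrules} and \ref{morecorr}, with the same care about the one-directional implications around (a), (c) and (3) and with the passage between (4),(5) and the listed inequalities 4,5 left as the routine mate computation under the graph--cograph adjunctions that you describe.
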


 For a lax extension $\hT$ of the monad $\bbT$ to $\CQ\text{-}\bf{Rel}$ we can now define:

 \begin{defn}
 A $({\mathbb T},\CQ)$-{\em category} $(X,\alpha)$ is a set $X$ over $\CQ_0$ equipped with a $\CQ$-relation $\alpha: X\rto TX$ satisfying the lax unit and multiplication laws
 $$1_X^{\circ}\leq e_X^{\circ}\circ\alpha\text{     and     }\hT\alpha\circ\alpha\leq m_X^{\circ}\circ\alpha.$$
 A $({\mathbb T},\CQ)$-{\em functor} $f:(X,\alpha)\to (Y,\beta)$ must satisfy
 $$\alpha\circ f^{\circ}\leq (Tf)^{\circ}\circ\beta.$$
 Hence, the structure of a $({\mathbb T},\CQ)$-category $(X,\alpha)$ consists of a family of $\CQ$-morphisms $\alpha(x,\Fx): |x|_X\to |\Fx|_{TX} \; (x\in X, \Fx\in TX)$, subject to the conditions
 $$ 1_{|x|}\leq\alpha(x,e_Xx)\text{    and    }\hT\alpha(\Fy,\FZ)\circ\alpha(x,\Fy)\leq\alpha(x,m_X\FZ),$$
 for all $x\in X, \Fy\in TX, \FZ\in TTX$. The $({\mathbb T},\CQ)$-functoriality condition for $f$ reads in pointwise form as
 $$\alpha(x,\Fy)\leq\beta(fx,Tf(\Fy))$$
 for all $x\in X, \Fy\in TX$. The emerging category is denoted by
 $$({\mathbb T},\CQ)\text{-}{\bf Cat};$$
 only if there is the danger of ambiguity will we write $({\mathbb T},\hat{T},\CQ)\text{-}{\bf Cat}$ to stress the dependency on the chosen extension $\hat{T}$.
 \end{defn}
 \begin{rem}\label{comparisonMonTop}
 When $\CQ$ is a commutative quantale $\sV$, then the structure of  a $({\mathbb T},\sV)$-category $(X,\alpha)$ may be written equivalently as a $\sV$-relation $TX\rto X$,
 and the notion takes on the familiar meaning (as presented in \cite{MonTop}). However, it is important to note that, because of the switch in direction of the $\sV$-relation $\alpha:X\rto TX$ (as a lax coalgebra structure) to a lax algebra structure $TX\rto X$ as in \cite{MonTop}, $({\mathbb T},\hat{T},\sV)\text{-}{\bf Cat}$ defined here actually becomes
 $({\mathbb T},\sV,\check{T})\text{-}{\bf Cat}$ as defined in \cite{MonTop}, III.1, with $\check{T}\varphi:=(\hat{T}(\varphi^{\circ}))^{\circ}$ and $\varphi^{\circ}:Y\rto X, \;\varphi^{\circ}(y,x)=\varphi(x,y)$,
 for all $\sV$-relations $\varphi:X\rto Y,\;x\in X,\;y\in Y$ (see Exercise III.1.J in \cite{MonTop}).
 \end{rem}

 Before presenting further examples, let us point out that $({\mathbb T},\CQ)$-categories and -functors are just disguised lax $\lam$-algebras with their lax homomorphisms, since $\CQ$-relations $\alpha:X\rto TX$ correspond bijectively to $\Set/\CQ_0$-morphisms $p:TX\to\sP X$ under the adjunction of Section 3.

  \begin{prop}\label{modelequivalence}
When $\lam$ and ${\mathbb T},\hat{T}$ are related by the correspondence of {\rm{Theorem} \ref{correspondence}}, then there is a (natural) isomorphism $$(\lam,\CQ)\text{-}{\bf Alg}\cong(\bbT,\hat{T},\CQ)\text{-}{\bf Cat}$$
 of categories which commutes with the underlying $\Set/\CQ_0$-functor.
 \end{prop}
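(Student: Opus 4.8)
The plan is to transport everything through the order isomorphism $\overleftarrow{(-)}\colon\QRel(X,TX)\to\Set/\CQ_0(TX,\sP X)$ recorded in Section 3, which sends a $\CQ$-relation $\alpha\colon X\rto TX$ to the $\Set/\CQ_0$-map $p:=\overleftarrow{\alpha}\colon TX\to\sP X$. On objects I would set $(X,\alpha)\mapsto(X,\overleftarrow{\alpha})$, with inverse sending $(X,p)$ to the unique $(X,\alpha)$ with $\overleftarrow{\alpha}=p$; on morphisms I would keep the underlying $\Set/\CQ_0$-map $f$ unchanged. Since the underlying set over $\CQ_0$ is never altered, once the axioms are shown to match, the assignment is automatically bijective on objects and morphisms and commutes with the two forgetful functors to $\Set/\CQ_0$, whence the isomorphism (and its naturality) is immediate. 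Because $\overleftarrow{(-)}$ is an order isomorphism, it suffices to check that it carries each defining inequality of a $(\bbT,\hT,\CQ)$-category (resp. $(\bbT,\CQ)$-functor) to the corresponding defining inequality of a lax $\lam$-algebra (resp. lax $\lam$-homomorphism), and conversely.

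For the unit law I would apply $\overleftarrow{(-)}$ to $1_X^\circ\leq e_X^\circ\circ\alpha$ and read off, from the identities of Remark \ref{rules}, that $\overleftarrow{1_X^\circ}=\sy_X$ and $\overleftarrow{e_X^\circ\circ\alpha}=p\cdot e_X$; this is exactly condition (f) of Definition \ref{laxalg}. For the homomorphism law I would apply $\overleftarrow{(-)}$ to $\alpha\circ f^\circ\leq(Tf)^\circ\circ\beta$ and again use Remark \ref{rules} to obtain $\overleftarrow{\alpha\circ f^\circ}=f_!\cdot p$ on the left and $\overleftarrow{(Tf)^\circ\circ\beta}=q\cdot Tf$ on the right, which is exactly (h). Both translations amount to looking up the relevant identity in Remark \ref{rules}, the only care needed being to keep the directions of the graphs and cographs and the types $X\rto TX$, $TY\rto TX$ straight.

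The single step carrying genuine content is the multiplication law, since it is the only place where the chosen lax extension $\hT=\Phi(\lam)$ enters. Applying $\overleftarrow{(-)}$ to $\hT\alpha\circ\alpha\leq m_X^\circ\circ\alpha$, I would first use $\overleftarrow{\psi\circ\phi}=\phi^\odot\cdot\overleftarrow{\psi}$ to split the left-hand side as $\alpha^\odot\cdot\overleftarrow{\hT\alpha}$, then substitute $\alpha^\odot=\sfs_X\cdot p_!$ (from $\phi^\odot=\sfs_X\cdot\phila_!$) and, crucially, $\overleftarrow{\hT\alpha}=\lam_X\cdot T\overleftarrow{\alpha}=\lam_X\cdot Tp$, which is precisely the formula $\overleftarrow{\hT\phi}=\lam_X\cdot T\phila$ that defines $\Phi(\lam)$ in the correspondence of Theorem \ref{correspondence}. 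This rewrites the left-hand side as $\sfs_X\cdot p_!\cdot\lam_X\cdot Tp$, while the right-hand side becomes $p\cdot m_X$ via $\overleftarrow{m_X^\circ\circ\alpha}=p\cdot m_X$; the resulting inequality is exactly (g). As every step is an equivalence, the dictionary of axioms is complete. I expect no real obstacle beyond this bookkeeping: no new estimate is required, only the identities of Remark \ref{rules}, the defining relation $\hT=\Phi(\lam)$, and the order-isomorphism property of $\overleftarrow{(-)}$.
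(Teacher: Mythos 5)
Your proposal is correct and follows essentially the same route as the paper's own proof: both transport the axioms through the order isomorphism $\overleftarrow{(-)}$, use the identities of Remark \ref{rules} to translate the unit and functoriality conditions, and decompose the multiplication law via $\overleftarrow{\hT\alpha\circ\alpha}=\alpha^{\odot}\cdot\overleftarrow{\hT\alpha}=\sfs_X\cdot p_!\cdot\lam_X\cdot Tp$. The only cosmetic difference is that the paper exhibits the inverse explicitly as $\alpha:=\beta^{\circ}\circ\varepsilon$-type formula rather than appealing abstractly to bijectivity of $\overleftarrow{(-)}$.
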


 \begin{proof}
 Given a $({\mathbb T},\CQ)$-category structure $\alpha$ on $X$, repeated applications of the rules of Remark \ref{rules} confirm that $\overleftarrow{\alpha}$ makes $X$ a lax $\lam$-algebra:
 $$\overleftarrow{\alpha}\cdot e_X=\overleftarrow{e_X^{\circ}\circ\alpha}\geq \overleftarrow{1_X^{\circ}}=\sy_X,$$
 $$\overleftarrow{\alpha}\cdot m_X=\overleftarrow{m_X^{\circ}\circ\alpha} \geq\overleftarrow{\hT\alpha\circ\alpha}=\alpha^{\odot}\cdot\overleftarrow{\hT\alpha}=\sfs_X\cdot\overleftarrow{\alpha}_!\cdot\lam_X\cdot T\overleftarrow{\alpha}.$$
 Conversely, given a lax $\lam$-algebra structure $p$ on $X$, putting $\alpha:=p^{\circ}\cdot\varepsilon_X$ one has $\overleftarrow{\alpha}=p$, and the same computational steps as above show $\overleftarrow{e_X^{\circ}\cdot\alpha}\geq\overleftarrow{1_X^{\circ}}$ and $\overleftarrow{m_x^{\circ}\cdot\alpha}\geq\overleftarrow{\hT\alpha\cdot\alpha}$, so that
  $\alpha$ is a $({\mathbb T},\CQ)$-category structure on $X$.

  A $({\mathbb T},\CQ)$-functor $f:(X,\alpha)\to(Y,\beta)$ gives a lax $\lam$-homomorphism $f:(X,\overleftarrow{\alpha})\to(Y,\overleftarrow{\beta})$, since
  $$f_!\cdot\overleftarrow{\alpha}=\overleftarrow{\alpha\circ f^{\circ}}\leq\overleftarrow{(Tf)^{\circ}\circ\beta}=\overleftarrow{\beta}\cdot Tf,$$
  and conversely.
 \end{proof}

 \begin{exmp}\label{laxextexamples}
 \begin{itemize}
 \item[\rm{(1)}] Let $\bbT$ be a $\Set$-monad with a lax extension $\tilde{T}$ to $\Rel={\sf 2}\text{-}\Rel$ that we now wish to extend further to $\sD{\sf 2}\text{-}\Rel$.
 As in Proposition \ref{strict lifting}, we first consider a $\bbT$-algebra structure $\zeta:T{\sf 2}\to{\sf 2}$, which then allows us to consider
  $\bbT$ as a monad on $\Set/{\sf 2}$, the category of sets $X$ with a given subset $A$ (see Example \ref{secondexamples}(2)). Of course, one now wishes
 to compute $T(X,A)$ as the pair $(TX,TA)$. Since the array function of $X$ is the characteristic function $c_A$ of $A$, this is possible precisely when the $\Set$-functor $T$ satisfies the pullback transformation condition
 $$\bfig
\square<300,300>[A`{\sf 1}`X`{\sf 2};``\top`c_A]
\place(70,230)[\lrcorner] \place(700,150)[\Lra] \place(1170,230)[\lrcorner] \place(60,230)[\cdot] \place(1160,230)[\cdot]
\square(1100,0)/->`->``->/<400,300>[T A`T{\sf 1}`T X`T{\sf 2};```Tc_A]
\square(1500,0)/->``->`->/<350,300>[T{\sf 1}`{\sf 1}`T{\sf 2}`{\sf 2},;``\top`\zeta]
\efig$$
 and this condition certainly holds when $T$ is taut ({\em i.e.,} preserves pullbacks of monomorphisms) and
 $\zeta^{-1}{\sf 1}=T{\sf 1}$.
 Since a morphism
 $\phi:(X,A)\rto(Y,B)$ (where $A\subseteq X, B\subseteq Y)$ in $\sD{\sf 2}\text{-}\Rel$ is completely determined by the restricted relation $\phi_{\text{rest}}: A\rto B$, one may now declare $\Fx$ to be $\hat{T}\phi$-related to $\Fy$ if, and only if, $\Fx\in TA, \Fy\in TB$ and $\Fx$ is $\tilde{T}\phi$-related to $\Fy$, to obtain a lax extension of $\bbT$ to
  $\sD{\sf 2}\text{-}\Rel$.

With $\tilde{T}$ and the $\bbT$-algebra structure $\zeta$ on $\sf 2$ given such that $\zeta^{-1}{\sf 1}=T{\sf 1}$, the objects $(X,A, \alpha)$ of the category $(\bbT,\hat{T},\sD{\sf 2})\text{-}\Cat$
may be described as sets $X$ with a subset $A$ such that $(A,\alpha)\in (\bbT,\tilde{T},{\sf 2})\text{-}\Cat$; morphisms $f:(X,A,\alpha)\to(Y,B,\beta)$
are maps $f:X\to Y$ with $f^{-1}B=A$ whose restrictions $A\to B$ are $(\bbT,{\sf 2})$-functors. The list monad
$\mathbb L$ (with $\zeta:L{\sf 2}\to{\sf 2}$ given by $\wedge$) and the ultrafilter monad $\mathbb U$
both satisfy our hypotheses, and  $(\bbT,\hat{T},\sD{\sf 2})\text{-}\Cat$ then describes the categories of {\bf ParMulOrd} and {\bf ParTop} of {\em partial multi-ordered sets} and {\em partial topological spaces}, respectively.

  \item[\rm{(2)}] Expanding on Example \ref{laxdistlawexamples}(2),(3) and Example \ref{lamalgebraexamples}(2), with $\mathbb L$ laxly extended to $\sD [0,\infty]\text{-}{\bf Rel}$,  one obtains as $({\mathbb L},\sD [0,\infty])\text{-}\Cat$ the category {\bf ParMulMet} of {\em partial multi-metric spaces} whose objects $X$ may be described as sets carrying a distance function $a:LX\times X\to [0,\infty]$ (see Remark \ref{comparisonMonTop}) subject to the conditions
\begin{align*}
&\text{max}(\displaystyle\sum\limits_{i=1}^n a(x_i,x_i),\; a(y,y))\leq a((x_1,...,x_n)y),\\
&a((\underbrace{x_{1,1},\dots,x_{1,n_1}}_{\Fx_1},\dots,\underbrace{x_{m,1},\dots,x_{m,n_m}}_{\Fx_m}),z)\leq\Big(\sum\limits_{i=1}^m a(\Fx_i,y_i)-a(y_i,y_i)\Big)+a((y_1,\dots,y_m),z);\\
\end{align*}
their morphisms $f:(X,a)\to(Y,b)$ must satisfy
$$ b((f(x_1),\dots,f(x_n)),f(y))\leq a((x_1,\dots,x_n),y)\text{   and   }
b(f(x),f(x))=a(x,x),$$
for all $x,x_1,...,x_n,y\in X$.
 \end{itemize}
 \end{exmp}

 \section{Algebraic functors, change-of-base functors}
 Here we consider the standard types of functors arising from a variation in the two parameters defining the categories $(\lam,\CQ)\text{-}{\bf Alg}\cong(\bbT,\hat{T},\CQ)\text{-}\Cat$, which have been discussed earlier, in the quantale-monad-enriched case (see \cite{CleHofTho, MonTop}) as well as for $\bbT$ in more general settings (see \cite{CarboniKellyWood}), but not in the current monad-quantaloid-enriched context, which does require some extra pecautions.

  Let us first consider two monads ${\mathbb T}=(T,m,e),\; {\mathbb S}=(S,n,d)$ on ${\bf Set}/\CQ_0$, both monotonely distributing over ${\mathbb P}_{\CQ}$, via the monotone distributive laws $\lam,\kappa$, respectively; equivalently, both coming equipped with lax extensions $\hat{T}$ and $\hat{S}$ to $\CQ\text{-}{\bf Rel}$, respectively.
 An {\em algebraic morphism} $h:(\bbT,\hat{T})\rto({\mathbb S},\hat{S})$ of lax extensions is a family of $\CQ$-relations $h_X:TX\rto SX\;(X\in \Set/\CQ_0)$, satisfying the following conditions for all $f:X\to Y\text{ in } \Set/\CQ_0,\;\phi:X\rto Y,\;\alpha:X\rto TX \text{ in }\CQ\text{-}{\bf Rel}$:

 \begin{tabular}{llr}
 &&\\
 a. & $h_X\circ(Tf)^{\circ}\leq (Sf)^{\circ}\circ h_Y$, &\quad\quad\quad\quad
 (lax naturality)\\
 b. & $e_X^{\circ}\leq d_X^{\circ}\circ h_X$,  &\quad (lax unit law)\\
 c. & $\hat{S}h_X\circ h_{TX}\circ m_X^{\circ}\leq n_X^{\circ}\circ h_X$, &\quad\quad\quad\quad\quad\quad\quad\; (lax multiplication law)\\
 d. & $\hat{S}\phi\circ h_X\leq h_Y\circ\hat{T}\phi$, &\quad (lax compatability)\\
 e. & $\hat{S}(h_X\circ\alpha)\leq\hat{S}h_X\circ\hat{S}\alpha$. &\quad (strictness at $h$)\\
 &&\\
 \end{tabular}

Note that, because of the lax functoriality of $\hat{S}$, ``$\leq$" in condition e actually amounts to ``=". Putting now $\tau_X:=\overleftarrow{h_X}$ and exploiting Remark \ref{rules},  we may equivalently call a family of $\Set/\CQ_0$-morphisms $\tau_X:SX\to\sP_{\CQ}TX\;(X\in\Set/\CQ_0)$ an {\em algebraic morphism} $\tau:\kappa\to\lambda$ of monotone distributive laws if the following conditions hold for all $f:X\to Y, \;g:Y\to\sP_{\CQ}X,\; p: TX\to \sP_{\CQ}X\text{ in }\Set/\CQ_0$:

\begin{tabular}{llr}
&&\\
a'.& $(Tf)_!\cdot\tau_X\leq\tau_Y\cdot Sf,$&(lax naturality)\\
b' & $\sy_{TX}\cdot e_X\leq\tau_X\cdot d_X,$&(lax unit law)\\
c'.& $(m_X)_!\cdot\sfs_{TTX}\cdot(\tau_{TX})_!\cdot\kappa_{TX}\cdot S\tau_X\leq\tau_X\cdot n_X,$&(lax multiplication law)\\
d'.& $\sfs_{TX}\cdot(\tau_X)_!\cdot\kappa_X\cdot Sg\leq\sfs_{TX}\cdot(\lam_X)_!\cdot (Tg)_!\cdot\tau_Y,$&(lax compatibility)\\
e'.& $\kappa_X\cdot S\sfs_X\cdot S(p_!)\cdot S\tau_X\leq \sfs_{SX}\cdot(\kappa_X)_!\cdot(Sp)_!\cdot\kappa_{TX}\cdot S\tau_X.$&(strictness at $p$)\\
&&\\
\end{tabular}

A routine calculation shows:
\begin{prop}\label{algfun}
Every algebraic morphism $h:(\bbT,\hat{T})\to({\mathbb S},\hat{S})$ of lax extensions induces the {\rm algebraic functor}
$$A_h:(\bbT,\hat{T},\CQ)\text{-}{\bf Cat}\to({\mathbb S},\hat{S},\CQ)\text{-}\Cat,\;(X,\alpha)\mapsto (X,h_X\circ\alpha).$$
When $h$ is equivalently described as an algebraic morphism $\tau:\kappa\to\lambda$, then $A_h$ is equivalently described as the algebraic functor
$$A_{\tau}:(\lam,\CQ)\text{-}{\bf Alg}\to(\kappa,\CQ)\text{-}{\bf Alg}, \;(X,p)\mapsto (X,\nu_X\cdot p_!\cdot\tau_X).$$
\end{prop}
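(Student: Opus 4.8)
The plan is to prove the two assertions in turn. First I would show that the assignment $(X,\alpha)\mapsto(X,h_X\circ\alpha)$ is a well-defined functor into $(\mathbb{S},\hat{S},\CQ)\text{-}\mathbf{Cat}$, drawing on the five conditions a--e of an algebraic morphism together with the defining $(\mathbb{T},\hat{T},\CQ)$-category laws. Second, I would transport this functor along the isomorphism of Proposition \ref{modelequivalence} to read off the stated description of $A_\tau$. Every computation is a short chain of inequalities in $\CQ\text{-}\mathbf{Rel}$, each link invoking exactly one prescribed hypothesis, so the only real care required is bookkeeping: keeping the two monads and their laws apart and calling on the correct one of a--e at each step.

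For well-definedness I would verify the two $(\mathbb{S},\hat{S},\CQ)$-category laws for $\beta:=h_X\circ\alpha$. The unit law is immediate from the $(\mathbb{T},\hat{T},\CQ)$-unit law $1_X^\circ\leq e_X^\circ\circ\alpha$ and condition b, giving $1_X^\circ\leq e_X^\circ\circ\alpha\leq d_X^\circ\circ h_X\circ\alpha=d_X^\circ\circ\beta$. The multiplication law is the crux and assembles as
\begin{align*}
\hat{S}\beta\circ\beta
&=\hat{S}(h_X\circ\alpha)\circ h_X\circ\alpha
=\hat{S}h_X\circ\hat{S}\alpha\circ h_X\circ\alpha\\
&\leq\hat{S}h_X\circ h_{TX}\circ\hat{T}\alpha\circ\alpha
\leq\hat{S}h_X\circ h_{TX}\circ m_X^\circ\circ\alpha
\leq n_X^\circ\circ h_X\circ\alpha
=n_X^\circ\circ\beta,
\end{align*}
where the second equality is condition e (strictness at $h$), the first inequality is condition d at $\phi=\alpha$, the second is the $(\mathbb{T},\hat{T},\CQ)$-multiplication law $\hat{T}\alpha\circ\alpha\leq m_X^\circ\circ\alpha$, and the last is condition c. Functoriality is even shorter: for a $(\mathbb{T},\hat{T},\CQ)$-functor $f$, so $\alpha\circ f^\circ\leq(Tf)^\circ\circ\alpha'$, condition a yields
$$(h_X\circ\alpha)\circ f^\circ=h_X\circ(\alpha\circ f^\circ)\leq h_X\circ(Tf)^\circ\circ\alpha'\leq(Sf)^\circ\circ h_Y\circ\alpha',$$
which is precisely the $(\mathbb{S},\hat{S},\CQ)$-functoriality of $f$ between the images; hence $A_h$ is a functor.

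Finally, to extract $A_\tau$ I would invoke Proposition \ref{modelequivalence}. Writing $p=\overleftarrow{\alpha}$ for the $\lambda$-algebra structure corresponding to $\alpha$ and recalling $\tau_X=\overleftarrow{h_X}$, the $\kappa$-algebra structure corresponding to $A_h(X,\alpha)=(X,h_X\circ\alpha)$ is $\overleftarrow{h_X\circ\alpha}$, and Remark \ref{rules} computes this directly: part (2) gives $\overleftarrow{h_X\circ\alpha}=\alpha^\odot\cdot\overleftarrow{h_X}=\alpha^\odot\cdot\tau_X$, and part (1) gives $\alpha^\odot=\sfs_X\cdot(\overleftarrow{\alpha})_!=\sfs_X\cdot p_!$, so that $\overleftarrow{h_X\circ\alpha}=\sfs_X\cdot p_!\cdot\tau_X$, the asserted formula for $A_\tau$ (with $\nu_X=\sfs_X$); well-definedness of $A_\tau$ then comes for free from that of $A_h$. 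The one conceptual point to flag is the direction reversal induced by the transpose $\overleftarrow{(-)}$: although $A_h$ runs from the $\mathbb{T}$-based to the $\mathbb{S}$-based category of lax (co)algebras, the associated $A_\tau$ runs from $(\lambda,\CQ)\text{-}\mathbf{Alg}$ to $(\kappa,\CQ)\text{-}\mathbf{Alg}$ and is built from $\tau:\kappa\to\lambda$, so one must keep the variances consistent when matching the two descriptions. I expect no genuine obstacle beyond this bookkeeping.
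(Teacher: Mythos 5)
Your proposal is correct and carries out exactly the ``routine calculation'' that the paper omits: the unit law via condition b, the multiplication law via the chain e, d, the $(\mathbb{T},\hat T,\CQ)$-multiplication law, and c, functoriality via condition a, and the translation to $A_\tau$ via Proposition \ref{modelequivalence} and Remark \ref{rules} (correctly reading $\nu_X$ as $\sfs_X$). No gaps.
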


Considering $\mathbb S$ and $\hat{S}$ identical or, equivalently, $\kappa=1_{\sP}$,  with the algebraic morphism
$h_X=e_X^{\circ}$ or, equivalently, $\tau_X=\sy_{TX}\cdot e_X$, one obtains:

\begin{cor}\label{algfuncor}
For every monad $\mathbb T$ on $\Set/\CQ_0$ with lax extension $\hat{T}$ and corresponding monotone distributive law $\lam$, there is an algebraic functor
$$A:(\bbT,\CQ)\text{-}\Cat\to\CQ\text{-}\Cat, (X,\alpha)\mapsto(X,e_X^{\circ}\circ\alpha)$$
that is equivalently described by
$$A:(\lam,\CQ)\text{-}{\bf Alg}\to\CQ\text{-}\Cat, (X,p)\mapsto (X,p\cdot e_X).$$
\end{cor}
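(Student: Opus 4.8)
The plan is to read the statement off Proposition~\ref{algfun} by specialising to the case in which $\mathbb{S}$ is the identity monad on $\Set/\CQ_0$, so that $\hat{S}=1_{\CQ\text{-}{\bf Rel}}$ and, equivalently, $\kappa=1_{\sP}$ (the law of Example~\ref{laxdistlawexamples}(1)). With this choice both the source and the target of the algebraic functor reduce to the categories named in the Corollary: on the relational side an $(\mathrm{Id},1,\CQ)$-category $(X,\alpha)$ is a $\CQ$-relation $\alpha:X\rto X$ with $1_X^{\circ}\leq\alpha$ and $\alpha\circ\alpha\leq\alpha$, i.e. exactly a $\CQ$-category, while on the presheaf side $(1_{\sP},\CQ)\text{-}{\bf Alg}\cong\CQ\text{-}{\bf Cat}$ by Example~\ref{lamalgebraexamples}(1). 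Hence the only genuine tasks are to check that $h_X:=e_X^{\circ}:TX\rto X$ (equivalently $\tau_X:=\sy_{TX}\cdot e_X$) is an algebraic morphism $(\bbT,\hat{T})\to(\mathrm{Id},1)$, and then to simplify the formulas furnished by Proposition~\ref{algfun}.

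First I would verify conditions a--e for $h=e^{\circ}$. Since $\hat{S}$ and $\mathbb{S}$ are trivial, conditions a, b, c and e reduce, through the contravariant functoriality of the cograph $(-)^{\circ}$, to the naturality of $e$ and the monad unit law for $\mathbb{T}$: for a, the square $e_Y\cdot f=Tf\cdot e_X$ gives $e_X^{\circ}\circ(Tf)^{\circ}=f^{\circ}\circ e_Y^{\circ}$; for c, the identity $e_{TX}^{\circ}\circ m_X^{\circ}=(m_X\cdot e_{TX})^{\circ}=1_{TX}^{\circ}$ yields $e_X^{\circ}\circ e_{TX}^{\circ}\circ m_X^{\circ}=e_X^{\circ}$; conditions b and e collapse to identities. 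The one substantive condition is d, which here reads $\phi\circ e_X^{\circ}\leq e_Y^{\circ}\circ\hat{T}\phi$; but this is precisely inequality (4) in the table preceding Proposition~\ref{morecorr}, expressing the lax naturality of $e^{\circ}:\hat{T}\rto 1_{\Set/\CQ_0}$, and so holds because $\hat{T}$ is a lax extension (Theorem~\ref{correspondence}).

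Finally I would unwind Proposition~\ref{algfun}. On $(\bbT,\hat{T},\CQ)\text{-}{\bf Cat}$ the assignment $(X,\alpha)\mapsto(X,h_X\circ\alpha)$ is by definition $(X,\alpha)\mapsto(X,e_X^{\circ}\circ\alpha)$. On $(\lam,\CQ)\text{-}{\bf Alg}$ the functor sends $(X,p)$ to $(X,\sfs_X\cdot p_!\cdot\tau_X)=(X,\sfs_X\cdot p_!\cdot\sy_{TX}\cdot e_X)$; using naturality of $\sy$ in the form $p_!\cdot\sy_{TX}=\sy_{\sP X}\cdot p$ and then the $\bbP_{\CQ}$-unit law $\sfs_X\cdot\sy_{\sP X}=1_{\sP X}$, this collapses to $(X,p\cdot e_X)$, as claimed. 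Consistency of the two descriptions under the isomorphism of Proposition~\ref{modelequivalence} is automatic, since Remark~\ref{rules}(2) gives $\overleftarrow{e_X^{\circ}\circ\alpha}=\overleftarrow{\alpha}\cdot e_X$. The only thing demanding attention is the bookkeeping with the variance of $(-)^{\circ}$ and the interplay of the two unit laws; there is no real obstacle, the whole result being a direct instance of Proposition~\ref{algfun}.
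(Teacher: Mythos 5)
Your proposal is correct and follows exactly the paper's route: the Corollary is obtained by specializing Proposition \ref{algfun} to the identity monad $\mathbb S$ (so $\hat S=1$, $\kappa=1_{\sP}$) with $h_X=e_X^{\circ}$, equivalently $\tau_X=\sy_{TX}\cdot e_X$. Your verification of conditions a--e and the simplification $\sfs_X\cdot p_!\cdot\sy_{TX}\cdot e_X=p\cdot e_X$ supply precisely the ``routine calculation'' the paper leaves implicit, and they are all accurate.
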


\begin{exmp}\label{algfctrexample}
(See \cite{LaiTholen}.) For the powerset monad ${\mathbb P}={\mathbb P}_{\sf 2}$ and the ultrafilter monad $\mathbb U$ with their monotone distributive laws $\delta$ and $\beta$ over ${\mathbb P}_{\sV}$ of Example \ref{laxdistlawexamples}(4),(5) and their corresponding lax extensions
$\hat{\sP}$ and $\overline{U}$ to $\sV\text{-}{\bf Rel}$, where $\sV=(\sV,\otimes,{\sf k})$ is a commutative and completely distributive quantale, the algebraic morphism
$h$ with $h_X:UX\rto\sP X,\;\;h_X(\Fx,A)={\sf k}$ if $A\in\Fx\in UX$, and $h_X(\Fx,A)=\bot$ else, induces the algebraic functor
$$({\mathbb U},\sV)\text{-}\Cat\to({\mathbb P},\sV)\text{-}\Cat\cong\sV\text{-}{\bf Cls},$$
which actually takes values in $\sV\text{-}{\bf App}$ and facilitates the isomorphism of categories $({\mathbb U},\sV)\text{-}\Cat\cong\sV\text{-}{\bf App}$
already mentioned in equivalent form in Example \ref{lamalgebraexamples}(4).
\end{exmp}

In order to describe change-of-base functors in the general setting of this paper, let us now consider a lax homomorphism $\vartheta:\CQ\to\CR$ of quantaloids, so that we have a lax natural transformation
$\vartheta:B_{\vartheta_0}\sP_{\CQ}\to\sP_{\CR} B_{\vartheta_0}$ (see the end of Section 3), and a $\Set$-monad $\bbT=(T,m,e)$ which, according to Proposition \ref{strict lifting}, has been lifted to $\Set/\CQ_0$ and $\Set/\CR_0$ via $\bbT$-algebra structures $\zeta:T\CQ_0\to\CQ_0$ and $\eta:T\CR_0\to\CR_0$, respectively, such that
$\vartheta_0:\CQ_0\to\CR_0$ is a $\bbT$-homomorphism. The liftings of $\bbT$ to $\Set/\CQ_0$ and $\Set/\CR_0$ commute with the ``discrete change-of-base functor" $B_{\vartheta_0}$, that is:
$B_{\vartheta_0}T=TB_{\vartheta_0},B_{\vartheta_0}e=eB_{\vartheta_0}, B_{\vartheta_0}m=mB_{\vartheta_0}$. (These provisions are, of course, trivially satisfied when $\CQ$ and $\CR$ are quantales.)

Extendinging now $B_{\vartheta_0}$ to a functor $\tilde{B_{\vartheta}}:\CQ\text{-}{\bf Rel}\to\CR\text{-}{\bf Rel}$ by
$(\tilde{B_{\vartheta}}\phi)(x,y)=\vartheta(\phi(x,y))$  and considering lax extensions $\hat{T}, \check{T}$ of $\bbT$ to $\CQ\text{-}{\bf Rel}, \CR\text{-}{\bf Rel}$, respectively, we call $\vartheta$
 {\em compatible} with $\hat{T},\check{T}$ if
 $$\check{T}\tilde{B_{\vartheta}}\phi\leq \tilde{B_{\vartheta}}\hat{T}\varphi\quad\quad\quad\quad(\star)$$
 for all $\phi:X\rto Y $ in $\CQ\text{-}{\bf Rel}$. (Note that the two $\CR$-relations in $(\star)$ are comparable since $B_{\vartheta_0}T=TB_{\vartheta_0}$.) If we describe the two lax extensions $\hat{T},\check{T}$ equivalently by the monotone distributive laws $\lam,\kappa$, respectively, using the natural lax natural transformation
 $\vartheta: B_{\vartheta_0}\sP_{\CQ}\to\sP_{\CR} B_{\vartheta_0}$ (see the end of Section 3) and the easily
 verified rule $\overleftarrow{\tilde{B_{\vartheta}}\phi}=\vartheta_X\cdot B_{\vartheta_0}\overleftarrow{\phi}$,
 we see that $(\star)$ may equivalently be formulated as
 $$\kappa B_{\vartheta_0}\cdot T\vartheta\leq\vartheta T\cdot B_{\vartheta_0}\lam\quad\quad(\star\star).$$

 Now we can state the following Proposition, which one may prove using lax extensions and  transcribing the known proof for the quantale case (see \cite{MonTop}, III.3.5); alternatively, one may proceed by using the monotone distributive laws and the lax monad inequalities of $\vartheta$ as stated at the end of Section 3.

\begin{prop}\label{changeofbase}
Under hypothesis $(\star)$ one obtains the $\rm {change\text{-}of\text{-}base\text{ }functor}$
$$B_{\vartheta}:(\bbT,\hat{T},\CQ)\text{-}{\bf Cat}\to(\bbT,\check{T},\CR)\text{-}{\bf Cat},\;(X,\alpha)\mapsto (B_{\vartheta_0}X,\tilde{B_{\vartheta}}\alpha).$$
Under hypothesis $(\star\star)$ this functor is equivalently described as
 $$B_{\vartheta}:(\lam,\CQ)\text{-}{\bf Alg}\to(\kappa,{\CR})\text{-}{\bf Alg}, \;(X,p)\mapsto (B_{\vartheta_0}X,\vartheta_X\cdot B_{\vartheta_0}p).$$
\end{prop}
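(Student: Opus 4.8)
The plan is to carry out the verification entirely on the $(\bbT,\hat{T},\CQ)\text{-}{\bf Cat}$ side (the first description of $B_\vartheta$), and then to transport the result to the lax-algebra side via the isomorphism of Proposition \ref{modelequivalence} together with the rule $\overleftarrow{\tilde{B_{\vartheta}}\phi}=\vartheta_X\cdot B_{\vartheta_0}\overleftarrow{\phi}$. The heart of the argument is a short list of auxiliary properties of the extended functor $\tilde{B_{\vartheta}}:\CQ\text{-}{\bf Rel}\to\CR\text{-}{\bf Rel}$, all of which follow from $\vartheta$ being a monotone lax homomorphism and from $B_{\vartheta_0}$ commuting with $T,e,m$ on objects.

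First I would record that $\tilde{B_{\vartheta}}$ is monotone and lax-functorial, i.e.\ $\tilde{B_{\vartheta}}\psi\circ\tilde{B_{\vartheta}}\phi\leq\tilde{B_{\vartheta}}(\psi\circ\phi)$, which is immediate from $\vartheta v\circ\vartheta u\leq\vartheta(v\circ u)$ and from monotonicity of $\vartheta$ on hom-sets. Next I would establish the whiskering identities: since the composite $h^\circ\circ\phi$ with a map $h$ is computed by a trivial supremum, $\tilde{B_{\vartheta}}$ whiskers \emph{strictly} on the left by cographs of maps, $\tilde{B_{\vartheta}}(h^\circ\circ\phi)=(B_{\vartheta_0}h)^\circ\circ\tilde{B_{\vartheta}}\phi$; whereas whiskering on the right by a cograph involves a genuine supremum over a fibre, so only the lax inequality $\tilde{B_{\vartheta}}\phi\circ(B_{\vartheta_0}f)^\circ\leq\tilde{B_{\vartheta}}(\phi\circ f^\circ)$ holds. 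Finally, from $1_{\vartheta s}\leq\vartheta 1_s$ one gets $(B_{\vartheta_0}f)^\circ\leq\tilde{B_{\vartheta}}(f^\circ)$, and in particular $1_{B_{\vartheta_0}X}^\circ\leq\tilde{B_{\vartheta}}(1_X^\circ)$.

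With these in hand the two defining inequalities for $(B_{\vartheta_0}X,\tilde{B_{\vartheta}}\alpha)$ follow formally. For the lax unit law I would chain $1_{B_{\vartheta_0}X}^\circ\leq\tilde{B_{\vartheta}}(1_X^\circ)\leq\tilde{B_{\vartheta}}(e_X^\circ\circ\alpha)=(e_{B_{\vartheta_0}X})^\circ\circ\tilde{B_{\vartheta}}\alpha$, using monotonicity, the $\CQ$-unit law, the strict left-whiskering identity, and $B_{\vartheta_0}e=eB_{\vartheta_0}$. For the lax multiplication law I would start from $\check{T}(\tilde{B_{\vartheta}}\alpha)\leq\tilde{B_{\vartheta}}(\hat{T}\alpha)$ --- this is precisely the compatibility hypothesis $(\star)$ --- and then use lax-functoriality, the $\CQ$-multiplication law, and strict left-whiskering to obtain $\check{T}(\tilde{B_{\vartheta}}\alpha)\circ\tilde{B_{\vartheta}}\alpha\leq\tilde{B_{\vartheta}}(\hat{T}\alpha\circ\alpha)\leq\tilde{B_{\vartheta}}(m_X^\circ\circ\alpha)=(m_{B_{\vartheta_0}X})^\circ\circ\tilde{B_{\vartheta}}\alpha$. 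For a $(\bbT,\CQ)$-functor $f:(X,\alpha)\to(Y,\beta)$ I would check that $B_{\vartheta_0}f$ is again a functor by $\tilde{B_{\vartheta}}\alpha\circ(B_{\vartheta_0}f)^\circ\leq\tilde{B_{\vartheta}}(\alpha\circ f^\circ)\leq\tilde{B_{\vartheta}}((Tf)^\circ\circ\beta)=(T(B_{\vartheta_0}f))^\circ\circ\tilde{B_{\vartheta}}\beta$, where the first step is the lax right-whiskering inequality and the last uses $B_{\vartheta_0}T=TB_{\vartheta_0}$. Functoriality of $B_\vartheta$ is then inherited from that of $B_{\vartheta_0}$.

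It remains to match the two descriptions. Under Proposition \ref{modelequivalence} the object $(X,p)$ corresponds to $(X,\alpha)$ with $p=\overleftarrow{\alpha}$, and its image corresponds to $(B_{\vartheta_0}X,\overleftarrow{\tilde{B_{\vartheta}}\alpha})$; the cited rule $\overleftarrow{\tilde{B_{\vartheta}}\phi}=\vartheta_X\cdot B_{\vartheta_0}\overleftarrow{\phi}$ gives $\overleftarrow{\tilde{B_{\vartheta}}\alpha}=\vartheta_X\cdot B_{\vartheta_0}p$, which is exactly the claimed lax-algebra formula; since $(\star)\Leftrightarrow(\star\star)$ has already been noted, this also certifies that $B_\vartheta$ lands in $(\kappa,\CR)\text{-}{\bf Alg}$. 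I expect no deep obstacle here: the only genuine subtlety is bookkeeping the \emph{direction} of each inequality, because $\vartheta$ respects suprema only laxly, so $\tilde{B_{\vartheta}}$ is merely a lax functor --- left-whiskering by a cograph is strict (trivial supremum) while right-whiskering is only lax --- and the hypothesis $(\star)$ is invoked exactly once, to pass from $\check{T}\tilde{B_{\vartheta}}\alpha$ to $\tilde{B_{\vartheta}}\hat{T}\alpha$ in the multiplication law.
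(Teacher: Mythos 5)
Your proof is correct and follows exactly the route the paper itself recommends: the paper gives no detailed argument for Proposition \ref{changeofbase}, deferring to the quantale case in III.3.5 of \cite{MonTop} ``using lax extensions'', and your verification on the $(\bbT,\hat{T},\CQ)\text{-}{\bf Cat}$ side -- monotone lax functoriality of $\tilde{B_{\vartheta}}$, strict left-whiskering versus lax right-whiskering by cographs, one invocation of $(\star)$ -- is precisely that transcription, with the passage to the lax-algebra description handled correctly via Proposition \ref{modelequivalence} and the rule $\overleftarrow{\tilde{B_{\vartheta}}\phi}=\vartheta_X\cdot B_{\vartheta_0}\overleftarrow{\phi}$. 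No gaps.
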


\begin{exmp}\label{lastexample} For a commutative and (for simplicity) divisible quantale $\sV$, we consider the lax extensions of the list monad $\mathbb L$ to $\sV\text{-}{\bf Rel}$ and ${\sf DV}\text{-}{\bf Rel}$ induced by the monotone distributive laws of Example \ref{laxdistlawexamples}(2),(3), which we may both denote by $\hat{L}$.
In fact, for $\phi:X\rto Y$ and $x_i\in X, y_j\in Y$ one has
$$\hat L\phi((x_1,...,x_n),(y_1,...,y_m))=\phi(x_1,y_1)\otimes...\otimes\phi(x_m,y_m)\text{  if }m=n,$$
to be interpreted as an arrow $|x_1|\otimes...\otimes|x_n|\to/~>/ |y_1|\otimes...\otimes |y_n|$ in the $\sf DV$-case, and the value is $\bot$ otherwise. For the homomorphism $\iota:\sV\to{\sf DV}$ and its retractions $\delta, \gamma$ as described in Section 2, one sees that
 ${\tilde B}_{\iota}$ embeds $\sV\text{-}{\bf Rel}$ fully into ${\sf DV}\text{-}{\bf Rel}$, providing every set with the constant array function with value $\sf k$, while its retractions ${\tilde B}_{\delta}$ and ${\tilde B}_{\gamma}$ are given by
 ${\tilde B}_{\delta}\phi(x,y)=|y|\rda\phi(x,y)$ and ${\tilde B}_{\gamma}\phi(x,y)=\phi(x,y)\lda|x|.$
 Since the compatability condition $(\star)$ holds for all, $\iota, \delta$ and $\gamma$ (strictly so for $\iota$), as ``liftings" of the corresponding functors mentioned in Section 2, one obtains the full embedding
 $B_{\iota}:({\mathbb L},\sV)\text{-}\Cat\to({\mathbb L}, {\sf DV})\text{-}\Cat$
 and its retractions $B_{\delta},B_{\gamma}$, which we describe explicitly here only in the case $\sV=[0,\infty]$ using the notation of Example \ref{laxextexamples}(2):
\begin{align*}
&B_{\delta},B_{\gamma}:{\bf ParMultMet}\to{\bf MulMet}\\
&\quad\;\;\, B_{\delta}:(X,a)\mapsto (X,a_{\delta}),\;a_{\delta}((x_1,...,x_n),y)=a((x_1,...,x_n),y)-\sum\limits_{i=1}^n a(x_i,x_i),\\
&\quad\;\;\, B_{\gamma}:(X,a)\mapsto (X,a_{\gamma}),\;a_{\gamma}((x_1,...,x_n),y)=a((x_1,...,x_n),y)-a(y,y).\\\end{align*}

The full reflective embedding
$E_{\sf V}:{\sf DV}\text{-}{\bf Cat}\to{\sf V}\text{-}{\bf Cat}/{\sf V}$
of Section 2 may be ``lifted" along the algebraic functors $({\mathbb L},{\sf DV})\text{-}\Cat\to {\sf DV}\text{-}\Cat$
and $({\mathbb L},\sV)\text{-}\Cat/\sV\to\sV\text{-}\Cat/\sV$ to obtain a full reflective embedding
$$E=E_{{\mathbb L},\sf V}:({\mathbb L},{\sf DV})\text{-}{\bf Cat}\to({\mathbb L},{\sf V})\text{-}{\bf Cat}/{\sf V},$$
which we briefly describe next, always assuming that $\sV$ be commutative and divisible.
First, in accordance with the general setting of  III.5.3 of \cite{MonTop}, we combine the monoid structure of $\sV$ with its internal hom and regard $\sV$ as an $({\mathbb L}, \sV)$-category $(\sV,h)$ with $h:L\sV\rto \sV$ (see Remark \ref{comparisonMonTop}) given by $h((v_1,...,v_n),u)=(v_1\otimes...\otimes v_n)\lda u$.
Now $E$ provides an $({\mathbb L}, {\sf DV})$-category $(X,a)$ with the $({\mathbb L},{\sf V})$-category structure
 $d$ defined by $d((x_1,...,x_n),y)=a((x_1,...,x_n),,y)\lda a(y,y)$ and considers it an
 $({\mathbb L},{\sf V})$-category over {\sf V} via $tx=a(x,x)$. Conversely, the reflector provides an
$({\mathbb L},{\sf V})$-category $(X,d)$ that comes equipped with an $({\mathbb L},{\sf V})$-functor $t:X\to {\sf V}$, with the $({\mathbb L}, {\sf DV})$-category structure $a$ defined by $a((x_1,...,x_n),y)=d((x_1,...,x_n),y)\otimes ty$.

 In the case $\sV=[0,\infty]$ the functor $E$ becomes an isomorphism of categories, so that in the notation of
 Example \ref{laxextexamples}(2) one has
 $${\bf ParMulMet}\cong{\bf MulMet}/[0,\infty].$$
 Therefore, just as described in Section 2 in the ``non-multi" case, the standard construction of a right adjoint to the functor $\Sigma: {\bf MulMet}/[0,\infty]\to{\bf MulMet} $ therefore gives a right adjoint to $B_{\gamma}: {\bf ParMulMet}\to{\bf MulMet}$.
 \end{exmp}

 \section{Comparison with Hofmann's topological theories}

 In \cite{Hofmann}, for a $\Set$-monad ${\mathbb T}=(T,m,e)$ and a {\em commutative} quantale $\sV=((\sV,\otimes,{\sf k})$, Hofmann considers maps $\xi:T\sV\to\sV$ satisfying the following conditions:

\begin{tabular}{lr}
&\\
1. $1_{\sV}\leq\xi\cdot e_{\sV}, \quad\xi\cdot T\xi\leq\xi\cdot m_{\sV}$ & (lax $\mathbb T$-algebra laws);\\
2*. ${\sf k}\cdot \zeta\leq \xi \cdot T{\sf k},\quad      \otimes\cdot(\xi\times\xi)\cdot \text{can}\leq\xi\cdot T(\otimes)$& (lax $\mathbb T$-homom. laws);\\
3. $\forall f, g: Y\to\sV\text{ in  }\Set\quad(f \leq g\Rightarrow\xi\cdot Tf\leq\xi\cdot Tg)$ &(monotonicity);\\
4. $\xi_X(\sigma):=\xi\cdot T\sigma\quad(\sigma\in\sP_{\sV}X=\sV^X)$ defines a nat. transf. $\sP_{\sV}\to\sP_{\sV}T$&(naturality).\\
&\\
\end{tabular}
Here ${\sf k}$ and $\otimes$  are considered as maps $1\to\sV$ and $\sV\times\sV\to \sV$,
 respectively;
 $\text{can}:T(\sV\times \sV)\to T\sV\times T\sV$ is the canonical map with components $T\pi_1, T\pi_2$, where $ \pi_1,\pi_2 $ are product projections, and (in accordance with the notation introduced in Proposition \ref{strict lifting}) $\zeta:T1\to 1$ is the unique map onto a singleton set $1$. Note that Hofmann \cite{Hofmann} combined conditions 3 and 4 to a single axiom; however, the separation as given above (and in \cite{ClementinoTholen}) is easily seen to be equivalent with Hofmann's combined axiom and will make the comparison with the conditions of Definition \ref{toptheory} more transparent.

 Let us now compare these conditions with conditions 0--3 for a topological theory as given in Definition \ref{toptheory}, in the case that $\CQ=\sV$ is a commutative quantale.
 First we give a direct comparison of condition 2* with condition 2 of Definition \ref{toptheory} which, in the current context, reads as follows:

\begin{tabular}{lr}
&\\
$2.\;\; \sy_1\cdot\zeta\leq\xi\cdot T\sy_1, \;\; \sfs_1\cdot\theta\leq\xi\cdot T\sfs_1$ & \quad\quad\quad\quad\quad\quad\quad\quad\quad\quad\quad\quad\quad\quad\quad(lax ${\mathbb T}$ homom. laws);\\
&\\
\end{tabular}
here $\zeta:T1\to 1$  is trivial, and $\theta=\xi_!\cdot  (\zeta\cdot Tt)^!\cdot\xi\cdot T(t_!), \;\text{for } t:\sV\to 1$. Indeed, the latter condition implies the former, as we show first.

 \begin{prop}\label{ThoimpliesHof}
 Every map $\xi:T\sV\to\sV$ satisfying condition  {\rm 2} satisfies condition {\rm 2*}.
 \end{prop}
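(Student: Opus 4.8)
The plan is to cash in the collapse that happens when $\CQ=\sV$ is a one-object quantaloid: then $\CQ_0=1$, so $\sP\CQ_0=\sP 1=\sV$ and $\sP\sP\CQ_0=\sP\sV$ (a presheaf on $\sV$ being just a map $\sV\to\sV$). The unit halves of the two conditions will then match on sight. Indeed, the presheaf unit $\sy_1:1\to\sP 1=\sV$ sends the single point to $1_\ast={\sf k}$, so $\sy_1={\sf k}$ as maps $1\to\sV$; with $\zeta:T1\to 1$ trivial, the first inequality of condition 2, $\sy_1\cdot\zeta\le\xi\cdot T\sy_1$, is literally ${\sf k}\cdot\zeta\le\xi\cdot T{\sf k}$, the first inequality of condition 2*. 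All the work therefore lies in deriving the multiplicative half of 2* from that of 2.

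First I would make $\sfs_1:\sP\sV\to\sV$ explicit: applying the formula for $\sfs$ from Section 3 and using that composition in the one-object category $\sV$ is $\otimes$, one obtains $\sfs_1(\Sigma)=\bigvee_{\sigma\in\sV}\Sigma_\sigma\otimes\sigma$. The device that links this with the binary operation $\otimes:\sV\times\sV\to\sV$ of 2* is the map
$$j:\sV\times\sV\to\sP\sV,\qquad j(u,v)_\sigma=\begin{cases}u&\text{if }\sigma=v,\\ \bot&\text{else,}\end{cases}$$
which by direct inspection satisfies the two identities $\sfs_1\cdot j=\otimes$ and $t_!\cdot j=\pi_1$, where $t:\sV\to 1$ is the array map (so $t_!(\Sigma)=\bigvee_\sigma\Sigma_\sigma$) and $\pi_1$ is the first projection.

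Then I would specialise the multiplicative inequality of condition 2, $\sfs_1\cdot\theta\le\xi\cdot T\sfs_1$, at $\mathfrak{Z}:=Tj(\mathfrak{W})$ for an arbitrary $\mathfrak{W}\in T(\sV\times\sV)$. On the right, functoriality gives $T\sfs_1\cdot Tj=T(\sfs_1\cdot j)=T(\otimes)$, so the right-hand side is exactly $\xi(T(\otimes)\mathfrak{W})$, the target term of 2*. On the left I would unfold $\theta=\xi_!\cdot(\zeta\cdot Tt)^!\cdot\xi\cdot T(t_!)$ from Definition \ref{toptheory}: since $t_!\cdot j=\pi_1$, the inner term $\xi\cdot T(t_!)$ evaluates to $p:=\xi(T\pi_1\mathfrak{W})$; the adjoint map $(\zeta\cdot Tt)^!$ (the array of $T\sV$ being $\zeta\cdot Tt$, cf.\ Remark \ref{t}) carries $p$ to the constant presheaf with value $p$ on $T\sV$; and $\xi_!$ returns the presheaf on $\sV$ that is $p$ on $\mathrm{im}\,\xi$ and $\bot$ elsewhere. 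Applying $\sfs_1$ and using that $\otimes$ preserves suprema in each variable yields $\sfs_1(\theta(Tj\mathfrak{W}))=p\otimes\bigvee_{\mathfrak{y}\in T\sV}\xi(\mathfrak{y})$, which, since $T\pi_2\mathfrak{W}\in T\sV$ and $\otimes$ is monotone, is $\ge\xi(T\pi_1\mathfrak{W})\otimes\xi(T\pi_2\mathfrak{W})$, i.e.\ the source term $\otimes\cdot(\xi\times\xi)\cdot\text{can}$ of 2* at $\mathfrak{W}$. Chaining the two bounds delivers 2*.

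The main obstacle is the explicit evaluation of $\theta$ on $Tj(\mathfrak{W})$, specifically the two adjoint-image steps: checking that $(\zeta\cdot Tt)^!$ produces a constant presheaf and that $\xi_!$ then produces a presheaf supported on $\mathrm{im}\,\xi$, and verifying that the ensuing supremum $\bigvee_{\mathfrak{y}}\xi(\mathfrak{y})$ only enlarges the left-hand side so the inequality still points the right way. I would note that the argument is inherently asymmetric in the two tensor factors — the $t_!$-collapse recovers the first factor exactly while the second is absorbed into a global supremum — but this asymmetry is harmless precisely because it weakens the left-hand bound, and commutativity of $\sV$ is in fact not used in this direction.
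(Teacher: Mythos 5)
Your proof is correct and follows essentially the same route as the paper: your map $j$ is exactly the paper's $\chi$, the identities $\sfs_1\cdot\chi=\otimes$ and $t_!\cdot\chi=\pi_1$ are the same key facts, and your evaluation of $\theta\cdot T\chi(\Fw)$ (constant presheaf under $(\zeta\cdot Tt)^!$, then support on $\mathrm{im}\,\xi$ under $\xi_!$) reproduces the paper's pointwise computation. The only cosmetic difference is that the paper first establishes the presheaf-level inequality $\chi\cdot(\xi\times\xi)\cdot\mathrm{can}\leq\theta\cdot T\chi$ and then applies the monotone $\sfs_1$, whereas you apply $\sfs_1$ first and absorb the second factor into the supremum $\bigvee_{\Fy}\xi(\Fy)$.
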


 \begin{proof}
 Since ${\sf k}=\sy_1$, the first inequality of condition 2 actually coincides with the first inequality of condition 2*.  The crucial ingredient to comparing the second inequalities in both conditions is the map

 $$ \chi:\sV\times\sV\to\sP_{\sV}\sV=\sV^{\sV},\quad\chi(u,v)(w)= u\otimes(\sy_{\sV}v)(w) =\left\{\begin{array}{ll}
u
 & \text{if}\ w=v,\\
\bot & \text{else}
\end{array}\right\},
 $$
since, as one easily verifies, $\sfs_1\cdot\chi=\otimes$. It now suffices to show

$$(*) \quad\quad\quad\quad\chi \cdot(\xi\times\xi)\cdot\text{can}\leq\theta\cdot T\chi;$$
indeed, one can then conclude from $\sfs_1\cdot\theta\leq\xi\cdot T\sfs_1$
the desired inequality, as follows:
$$\otimes\cdot(\xi\times\xi)\cdot\text{can}=\sfs_1\cdot\chi\cdot(\xi\times\xi)\cdot\text{can}\leq\sfs_1\cdot\theta\cdot T\chi\leq\xi\cdot T\sfs_1\cdot T\chi=\xi\cdot T(\otimes).$$
In order to check $(*)$, let $\Fw\in T(\sV\times\sV)$ and $z\in \sV$. On one hand, with $\Fx:=T\pi_1(\Fw), \Fy:=T\pi_2(\Fw)$, one obtains
$$(\chi\cdot(\xi\times\xi)\cdot\text{can}(\Fw))(z)=\chi(\xi(\Fx),\xi(\Fy))(z)=
\left\{\begin{array}{ll}
\xi(\Fx)
 & \text{if}\ z=\xi(\Fy),\\
\bot & \text{else}
\end{array}\right\},$$
and on the other, with $\Fz:= T\chi(\Fw)$, and since $t_!\cdot\chi=\pi_1$, one obtains

\begin{align*}
(\theta\cdot T\chi(\Fw))(z)&=(\xi_!\cdot(\zeta\cdot Tt)^!\cdot\xi\cdot
T(t_!)(\Fz))(z)\\
&=\bigvee_{\Fa\in T\sV, \xi(\Fa)=z}((\zeta\cdot Tt)^!\cdot\xi\cdot T(t_!)(\Fz))(z)\\
&=\bigvee_{\Fa\in T\sV, \xi(\Fa)=z}\xi(T(t_!)(\Fz))=\left\{\begin{array}{ll}
\xi(\Fx)
 & \text{if  }\exists\; \Fa\in T\sV\;(\xi(\Fa)=z)\\
\bot & \text{else}
\end{array}\right\},
\end{align*}
which shows $(*)$.
\end{proof}

Next we will show that, in the presence of conditions 1, 3, 4, conditions 2 and 2* become equivalent, provided that the $\Set$-functor $T$ of $\mathbb T$ satisfies the {\em Beck-Chevalley condition (BC)}, that is: if $T$ transforms (weak) pullback diagrams in $\Set$ into weak pullback diagrams (see \cite{Hofmann, MonTop}). Note that the $\Set$-functors of both $\mathbb L$ and $\mathbb U$ satisfy BC.

Calling a topological theory $\xi$ (as defined in Definition \ref{toptheory}) {\em natural} if $\xi $ satisfies condition 4 above, we can show:

\begin{thm}\label{comparison theorem}
For a commutative quantale $\sV$ and a $\Set$-monad $\mathbb T$ with $T$ satisfying the Beck-Chevalley condition, the natural topological theories for $\mathbb T$ and $\sV$ are characterized as the maps $\xi$ satisfying Hofmann's conditions {\rm 1, 2*, 3, 4}.
\end{thm}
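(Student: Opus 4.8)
The plan is to reduce the statement, in the quantale case $\CQ=\sV$, to a single equivalence between the two versions of the ``lax $\mathbb T$-homomorphism'' axiom. Since $\CQ_0$ is then a singleton, condition~0 of Definition~\ref{toptheory} is vacuous, $\sP\CQ_0=\sV$, $\sP\sP\CQ_0=\sV^{\sV}$, and $\sfs_1\colon\sV^{\sV}\to\sV$ is given by $\Sigma\mapsto\bigvee_{v}\Sigma(v)\otimes v$. Moreover conditions~1, 3 and~4 of Definition~\ref{toptheory} (the last being exactly what ``natural'' adds) are literally Hofmann's conditions~1, 3 and~4. Hence the theorem amounts to proving that, in the presence of~1, 3, 4 and the Beck--Chevalley condition on $T$, condition~2 and condition~2* are equivalent. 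Because ${\sf k}=\sy_1$ and $\zeta\colon T1\to1$ is trivial, the first inequalities of~2 and~2* already coincide, so the whole task is to compare the second inequalities $\sfs_1\cdot\theta\leq\xi\cdot T\sfs_1$ and $\otimes\cdot(\xi\times\xi)\cdot\mathrm{can}\leq\xi\cdot T(\otimes)$.

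The implication ``condition~2 $\Rightarrow$ condition~2*'' is exactly Proposition~\ref{ThoimpliesHof}, and it uses neither naturality nor Beck--Chevalley; its engine is the auxiliary map $\chi\colon\sV\times\sV\to\sV^{\sV}$, $\chi(u,v)=u\otimes\sy_{\sV}v$, which satisfies $\sfs_1\cdot\chi=\otimes$ together with the inequality $\chi\cdot(\xi\times\xi)\cdot\mathrm{can}\leq\theta\cdot T\chi$. Thus the genuinely new content of the theorem is the converse, and this is where I expect the two extra hypotheses to be spent.

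For ``condition~2* $\Rightarrow$ condition~2'' I would argue as follows. The pointwise identity $\Sigma=\bigvee_{v}\chi(\Sigma(v),v)$ together with $\sfs_1\cdot\chi=\otimes$ exhibits $\sfs_1$ as a supremum of binary tensors, which is precisely the single operation that~2* controls. The plan is then: first use naturality (condition~4), i.e.\ the identity $(Tg)_!\cdot\xi_{X}=\xi_{Y}\cdot g_!$ for the components $\xi_{X}(\sigma)=\xi\cdot T\sigma$, to commute $\xi$ past $T$ of the evaluation and supremum maps that build $\sfs_1$ and $t_!$; next use Beck--Chevalley to realize $T\sfs_1$ and the inner factor $\xi\cdot T(t_!)$ of $\theta$ as suprema indexed by honest liftings in $T(\sV\times\sV)$, onto which $\mathrm{can}$ and hence~2* may be applied termwise; finally use monotonicity (condition~3) to pass the resulting inequalities through $\xi\cdot T(-)$, and condition~1 to supply the unit comparisons. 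Reassembling the suprema then yields $\sfs_1\cdot\theta\leq\xi\cdot T\sfs_1$.

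The main obstacle will be this converse, and within it the passage from the binary control of~2* to the infinitary supremum hidden in $\sfs_1(\Sigma)=\bigvee_v\Sigma(v)\otimes v$. The point is that $T$ need not preserve that supremum, nor the fibres of the supremum map $t_!$; Beck--Chevalley is exactly the hypothesis that repairs this, allowing elements of $T\sV$ (and of $T(\sV^{\sV})$) to be lifted back along the supremum and evaluation maps into $T(\sV\times\sV)$ so that $\mathrm{can}$ and~2* become applicable. Without it the reduction of $\sfs_1$ to $\otimes$ under $T$ collapses, which is why the equivalence is asserted only under Beck--Chevalley. I expect the careful verification of these lifting and commutation steps, rather than the surrounding bookkeeping, to be the substance of the proof.
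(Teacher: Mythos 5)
Your reduction is sound: in the quantale case condition~0 is vacuous, conditions~1, 3, 4 of Definition~\ref{toptheory} coincide with Hofmann's, the first halves of~2 and~2* agree because ${\sf k}=\sy_1$, and the direction ``2 $\Rightarrow$ 2*'' is exactly Proposition~\ref{ThoimpliesHof}. The gap is in the converse, which is the only substantive content of the theorem and which you only sketch. Your plan is to prove $\sfs_1\cdot\theta\leq\xi\cdot T\sfs_1$ directly from~2* by decomposing $\sfs_1$ as a supremum of binary tensors and ``lifting'' along ${\rm can}$ via Beck--Chevalley. But BC only yields surjectivity of $T(X\times Y)\to TX\times_{T1}TY$ onto the fibred product over $T1$ (which need not be a singleton: $L1\cong\mathbb{N}$), it says nothing about $T$ interacting with the infinitary supremum in $\sfs_1(\Sigma)=\bigvee_v\Sigma(v)\otimes v$, and the left-hand side $\sfs_1(\theta(\mathfrak{Z}))=\bigvee_{\mathfrak{a}\in T\sV}\xi(T(t_!)\mathfrak{Z})\otimes\xi(\mathfrak{a})$ involves a supremum over elements $\mathfrak{a}\in T\sV$ entirely decoupled from $\mathfrak{Z}$, so there is no evident choice of $\mathfrak{w}\in T(\sV\times\sV)$ to which~2* could be applied termwise. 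You yourself flag this as ``the main obstacle'' and assert that BC ``repairs'' it, but no argument is given; as it stands this is a research plan, not a proof.

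The paper closes the converse by an entirely different, indirect route. Given Hofmann's conditions and BC, one forms the Barr--Hofmann lax extension $T_{\xi}$ via the formula $(\dagger)$ --- that this is a genuine lax extension is the (cited) content of Hofmann's theorem, and is where BC and condition~2* are actually consumed. One then passes to the associated monotone distributive law $\lam=\Psi(T_{\xi})$ via Theorem~\ref{correspondence}, observes by Proposition~\ref{concentration1} that the theory $\overline{\xi}=\zeta_!\cdot\lam_1$ it induces automatically satisfies conditions~1 and~2, and finally computes $\overline{\xi}(\Fa)=\bigvee\{\xi(\Fw)\mid\Fw\in T\sV,\,T\pi_2(\Fw)=\Fa\}=\xi(\Fa)$, so that $\xi=\overline{\xi}$ is a topological theory. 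If you want to salvage a direct proof of ``2* $\Rightarrow$ 2'', you would essentially have to reprove the lax-functoriality of $T_{\xi}$ in disguise; the efficient path is to invoke the already-established correspondences rather than to manipulate $\theta$ by hand.
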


\begin{proof}
From Proposition \ref{ThoimpliesHof} we know that every natural topological theory satisfies Hofmann's conditions. Conversely, having $\xi$ satisfying Hofmann's conditions, since $T$ satisfies BC, one can define the induced {\em lax Barr-Hofmann extension} $T_{\xi}$ of $\mathbb T$, as given in Definition 3.4 of  \cite{Hofmann}:
$$(T_{\xi}\varphi)(\Fx,\Fy)=\bigvee\{\xi\cdot(T|\varphi|)(\Fw)\;|\;\Fw\in T(X\times Y), T\pi_1(\Fw)=\Fx, T\pi_2(\Fw)=\Fy\},\quad\quad(\dagger)$$
for all $\sV$-relations $\varphi:X\rto Y, \;\Fx\in TX,\Fy\in TY$, with $|\varphi |:X\times Y\to \sV$ denoting the map
 giving the values of $\varphi$. Let $\lam:=\Psi(T_{\xi})$
 be the corresponding monotone distributive law (see Theorem \ref{correspondence}), and $\overline{\xi}=\xi^{\lam}$ the induced topological theory (see Proposition \ref{concentration1}), {\em i.e.},
$$\overline{\xi}=\zeta_!\cdot\lam_1=\zeta_!\cdot\overleftarrow{T_{\xi}\varepsilon_1},$$
with $\varepsilon_1:1\rto \sV$ the counit at $1$ of the discrete presheaf adjunction.
Since $|\varepsilon_1|:1\times\sV\to\sV$ and $\pi_2:1\times\sV\to\sV$ may both be identified with the identity map on $\sV$, this formula gives, for all $\Fa\in T\sV$,
$$\overline{\xi}(\Fa)=\bigvee\{(T_{\xi}\varepsilon_1)(\Fb,\Fa)\;|\; \Fb\in T1\}=\bigvee\{\xi(\Fw)\;|\;\Fw\in T\sV, T\pi_2(\Fw)=\Fa\}=\xi(\Fa),$$
 Consequently, since $\overline{\xi}$ is induced by a monotone distributive law, $\xi=\overline{\xi}$ is a topological theory, with naturality given by hypothesis.
\end{proof}
In \cite{ClementinoTholen} we showed that, when $T$ satisfies BC, the assignment $\xi\mapsto T_{\xi}$ of $(\dagger)$ defines a bijective correspondence between the maps $\xi$ satisfying conditions 1, 2*, 3, 4 and those lax extensions $\hat{T}$
of $\mathbb T$ that are
\begin{itemize}
\item {\em left-whiskering}, that is: $\hat{T}(g_{\circ}\circ\varphi)=(Tg)_{\circ}\circ\hat{T}\varphi$ for all $\sV$-relations
$\varphi:X\rto Y$ and maps $g:Y\to Z$; and
\item {\em algebraic}, that is:
$$\hat{T}\varphi(\Fx,\Fy)=\bigvee\{\hat{T}(\varphi^1)(\Fb,\Fw)\;|\;\Fb\in T1, \Fw\in T(X\times Y), T\pi_1(\Fw)=\Fx, T\pi_2(\Fw)=\Fy\},$$ for all $\sV$-relations $\varphi:X\rto Y$; here $\varphi^1$ has the same values as $\varphi$ but is considered as a $\sV$-relation $1\rto X\times Y$.
\end{itemize}
(The proof of this characterization is easily reconstructed by following the proof of Theorem \ref{minimal} below.) We therefore obtain with Theorem \ref{comparison theorem}:
\begin{cor}\label{CT correspondence}
Under the hypotheses of {\rm Theorem \ref{comparison theorem}}, the assignment $\xi\mapsto T_{\xi}$ of $(\dagger)$ defines a bijective correspondence between the natural topological theories for $\mathbb T$ and $\sV$ and the left-whiskering and algebraic lax extensions of $\mathbb T$ to $\sV\text{-}{\bf Rel}$.
\end{cor}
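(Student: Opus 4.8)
The plan is to obtain the statement simply by composing two identifications already in hand, so that no new computation is required at the level of the corollary. First I would invoke Theorem \ref{comparison theorem}: under the present hypotheses --- $\sV$ a commutative quantale and $T$ satisfying Beck-Chevalley --- the natural topological theories for $\mathbb T$ and $\sV$ are \emph{precisely} the maps $\xi:T\sV\to\sV$ satisfying Hofmann's conditions 1, 2*, 3, 4. Hence the phrases ``natural topological theory'' and ``map satisfying Hofmann's four conditions'' pick out the very same class of maps $\xi$, and either may be used as the domain of the assignment $\xi\mapsto T_\xi$.

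Second, I would invoke the result of \cite{ClementinoTholen} recalled immediately before the corollary: when $T$ satisfies Beck-Chevalley, the formula $(\dagger)$ realizes $\xi\mapsto T_\xi$ as a bijection between the maps $\xi$ satisfying conditions 1, 2*, 3, 4 and the lax extensions $\hat T$ of $\mathbb T$ to $\sV\text{-}\Rel$ that are left-whiskering and algebraic. Substituting the first-step identification into the source of this bijection yields at once that $\xi\mapsto T_\xi$ restricts to a bijective correspondence between natural topological theories and left-whiskering algebraic lax extensions, which is exactly the assertion.

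Since both ingredients are already established, there is no genuine obstacle inside the corollary; the only point that needs care is the bookkeeping that makes the two bijections compose on the nose, namely that the \emph{same} underlying map $\xi$ serves in both roles. This is guaranteed by the proof of Theorem \ref{comparison theorem}, which shows $\overline\xi=\xi$ for the induced topological theory $\overline\xi=\xi^{\lam}$ attached to $\lam=\Psi(T_\xi)$; with that compatibility in place the composite is immediate and delivers the stated correspondence.
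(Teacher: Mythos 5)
Your proposal is correct and follows exactly the paper's route: the corollary is stated there as an immediate consequence of Theorem \ref{comparison theorem} (identifying natural topological theories with the maps satisfying Hofmann's conditions 1, 2*, 3, 4) combined with the bijection from \cite{ClementinoTholen} between such maps and the left-whiskering algebraic lax extensions. Your additional remark that the compatibility $\overline{\xi}=\xi$ established in the proof of Theorem \ref{comparison theorem} makes the two identifications compose on the same underlying map $\xi$ is exactly the right bookkeeping point.
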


The following chart summarizes the correspondences described in this paper:

\begin{center}
\begin{tikzpicture}
\node[draw] (M) at (-5,0) {\begin{tabular}{c} monotone \\ distributive laws\end{tabular}};
\node[draw] (MM) at (-5,-3) {\begin{tabular}{c}maximal monotone\\ distributive laws\end{tabular}};
\node[draw] (T) at (0,-3) {\begin{tabular}{c}topological\\ theories\end{tabular}};
\node[draw] (NT) at (0,-6) {\begin{tabular}{c}natural \\ topological \\ theories\end{tabular}};
\node[draw] (W) at (5,-6) {\begin{tabular}{c}left-whiskering \\ algebraic lax \\ monad extensions\end{tabular}};
\node[draw] (L) at (5,0) {\begin{tabular}{c}lax monad \\ extensions\end{tabular}};
\draw[transform canvas={yshift=8pt},shorten >=5pt,shorten <=5pt,->] (M)--(L);
\draw[transform canvas={yshift=-8pt},shorten >=5pt,shorten <=5pt,->] (L)--(M);
\draw[transform canvas={xshift=-8pt},shorten >=5pt,shorten <=5pt,right hook->] (MM)--(M);
\draw[transform canvas={xshift=8pt},shorten >=5pt,shorten <=5pt,->] (M)--(MM);
\draw[transform canvas={yshift=8pt},shorten >=5pt,shorten <=5pt,->] (MM)--(T);
\draw[transform canvas={yshift=-8pt},shorten >=5pt,shorten <=5pt,->] (T)--(MM);
\draw[transform canvas={yshift=8pt},shorten >=5pt,shorten <=5pt,->] (NT)--(W);
\draw[transform canvas={yshift=-8pt},shorten >=5pt,shorten <=5pt,->] (W)--(NT);
\draw[shorten >=5pt,shorten <=5pt,right hook->] (NT)--(T);
\draw[shorten >=5pt,shorten <=5pt,right hook->] (W)--(L);
\node at (-5,-1.5) {$\vdash$};
\node at (-6.1,-1.5) {Cor. \ref{Galois}};
\node at (0,0) {$\sim$};
\node at (0,0.5) {$\Phi$};
\node at (0,-0.5) {$\Psi$};
\node at (1.2,0.5) {Thm. \ref{correspondence}};
\node at (-2.2,-3) {$\sim$};
\node at (-2.2,-2.5) {Cor. \ref{bijectiveGalois}};
\node at (2.2,-6) {$\sim$};
\node at (2.2,-5.5) {Cor. \ref{CT correspondence}};
\end{tikzpicture}
\end{center}

Caution is needed when reading this chart as a diagram, as it commutes only in a limited way. The following Remark and Theorem shed light on this cautionary note.
\begin{rem}\label{final remark}
\begin{itemize}
\item [\rm{(1)}] The Proof of Theorem \ref{comparison theorem} shows that, starting with a natural topological theory and, under the provisions of Theorem \ref{comparison theorem} on $\sV$ and $\bbT$,
 chasing it counterclockwise all around the chart, one arrives at the same topological theory.
\item [\rm{(2)}] However, under the assumptions of Theorem \ref{comparison theorem} on $\sV$ and $\mathbb T$, chasing a natural topological theory $\xi$ upwards on the two possible paths one obtains very different types of lax monad extensions; their typical properties appear to be almost disjoint. Most remarkably, assigning to $\xi$ the {\em maximal} monotone distributive law
 $\lam^{\xi}$ and then the lax monad extension $\hat{T}=\Phi(\lam^{\xi})$, one observes easily that, for $\varphi:X\rto Y,\;\Fx\in TX, \Fy\in TY$ and $a:X\to 1$,
$$\hat{T}\varphi(\Fx,\Fy)=\xi(T(a_!\cdot\overleftarrow{\varphi})(\Fy))$$
does not depend on $\Fx\,$! But also the other path up $(\xi\mapsto T_{\xi})$ leads to quite special monad extensions, since being left-whiskering and algebraic are restrictive properties which, for example, exclude all extensions $\hat{T}$ that fail to satisfy the symmetry condition $\hat{T}(\varphi^{\circ})=(\hat{T}\varphi)^{\circ}$ (see Remark \ref{comparisonMonTop}), in particular the important extensions first considereded by Seal \cite{Seal}. In fact, in the
following theorem we give a context in which $T_{\xi}$ is described as {\em minimal} among extension families inducing $\xi$.
\end{itemize}
\end{rem}
For a commutative quantale $\sV$ and a $\Set$-monad ${\mathbb T}=(T,m,e)$, continuing to use the notations $\zeta:T1\to 1$ and $\phi^1:1\rto X\times Y$ whenever $\phi:X\rto Y$ in $\sV\text{-}{\bf Rel}$, let us call an extension family $\hat{T}=(\hat{T}\phi)_{\phi}$ (see Section 6) {\em admissible} if, for all $\phi$,
$$(\hat{T}\phi)^1\geq({\rm can}_{X,Y})_{\circ}\circ\hat{T}(\phi^1)\circ\zeta^{\circ},$$
and {\em algebraic}, if $``\geq"$ may always be replaced by ``="; here ${\rm can}_{X,Y}: T(X\times Y)\to
TX\times TY$ is the canonical map. (Note that this definition of algebraicity is just an element-free rendering of the
definition given above in a narrower context.) Denoting by
 $(\bbT,\sV)\text{-}\rm{EXT}_{\rm adm}$ the conglomerate of all admissible, left-op-whiskering and monotone extension families of $T$ (see Proposition \ref{corr}), one has a monotone map

 $$ \Xi:(\bbT,\sV)\text{-}{\rm{EXT}}_{{\rm adm}}\to\{\xi\in{\bf Set}(T\sV,\sV)\;|\;\xi \text{ monotone}\},\;\hat{T}\mapsto \zeta_!\cdot\overleftarrow{\hat{T}\varepsilon_1}=\overleftarrow{\hat{T}\varepsilon_1\circ\zeta^{\circ}},$$
 with monotonicity of arbitrary maps $T\sV\to\sV$ to be understood as in condition 3 above,  and with their order given pointwise as in $\sV$. The following Theorem shows that this map is an order embedding and has a right adjoint, given by
 $$\xi\mapsto T_{\xi}, \text{ with }(T_{\xi}\phi)^1=({\rm can}_{X,Y})_{\circ}\circ(T|\phi|)^{\circ}\circ\xi^{\circ}\circ\varepsilon_1$$
and $|\phi|=\overleftarrow{\phi^1}:X\times Y\to\sV$ as used in $(\dagger)$; in fact, the formula above is just an element-free rendering of the formula $(\dagger)$ of Theorem \ref{comparison theorem}.

 \begin{thm}\label{minimal}
 Let $T:\Set\to\Set$ satisfy BC, $\sV$ be a commutative quantale and the map $\xi:T\sV\to \sV$ be monotone. Then $T_{\xi}$ is the least of all admissible, left-op-whiskering and monotone extension families $\hat{T}$ with $\Xi(\hat{T})=\xi.$
 \end{thm}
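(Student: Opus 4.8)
The plan is to establish two things: that $T_\xi$ is itself an admissible, left-op-whiskering and monotone extension family with $\Xi(T_\xi)=\xi$, and that it lies below every other member of this class sent to $\xi$ by $\Xi$. Monotonicity of $T_\xi$ is immediate from $(\dagger)$ and the monotonicity of $\xi$: if $\varphi\leq\varphi'$ then $|\varphi|\leq|\varphi'|$, whence $\xi\cdot T|\varphi|\leq\xi\cdot T|\varphi'|$ and the defining suprema compare fibrewise over $({\rm can}_{X,Y})$. The identity $\Xi(T_\xi)=\xi$ is exactly the computation already carried out in the proof of Theorem~\ref{comparison theorem}: writing $\Xi(T_\xi)=\zeta_!\cdot\overleftarrow{T_\xi\varepsilon_1}$ and using that $\pi_2:1\times\sV\to\sV$ is a bijection (so $T\pi_2$ is bijective and $|\varepsilon_1|$ is identified with the identity of $\sV$), one finds $\Xi(T_\xi)(\Fa)=\xi(\Fa)$ for every $\Fa\in T\sV$; this step uses nothing about $\xi$ beyond its being a map, so it is valid for an arbitrary monotone $\xi$.

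Next I would verify that $T_\xi$ is left-op-whiskering, i.e.\ satisfies condition (0) of Proposition~\ref{corr}, and this is where the Beck--Chevalley hypothesis is indispensable. Fixing $\varphi:X\rto Y$ and $h:Z\to Y$, one has $|h^\circ\circ\varphi|=|\varphi|\cdot(1_X\times h)$, so by $(\dagger)$ both $T_\xi(h^\circ\circ\varphi)(\Fx,\Fz)$ and $((Th)^\circ\circ T_\xi\varphi)(\Fx,\Fz)=T_\xi\varphi(\Fx,Th(\Fz))$ are suprema of values of the form $\xi\cdot T|\varphi|(\Fw)$. The inequality ``$\leq$'' is formal: a witness $\mathfrak{v}\in T(X\times Z)$ for the left-hand supremum is pushed forward to $\Fw:=T(1_X\times h)(\mathfrak{v})$, which satisfies the projection constraints on the right and has the same $\xi\cdot T|\varphi|$-value. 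For ``$\geq$'' one must lift a witness $\Fw\in T(X\times Y)$ with $T\pi_2(\Fw)=Th(\Fz)$ to some $\mathfrak{v}\in T(X\times Z)$ with $T(1_X\times h)(\mathfrak{v})=\Fw$ and $T\pi_2(\mathfrak{v})=\Fz$ (then $T\pi_1(\mathfrak{v})=\Fx$ follows automatically); such a lift is supplied by applying $T$ to the pullback of $\pi_2:X\times Y\to Y$ along $h$, whose apex is $X\times Z$, and invoking BC. Admissibility of $T_\xi$ then comes for free: combining the left-op-whiskering just proved with $\Xi(T_\xi)=\xi$ and Remark~\ref{rules}(1) (in the form $T_\xi\varepsilon_1\circ\zeta^\circ=\xi^\circ\circ\varepsilon_1$) rewrites the defining formula for $(T_\xi\varphi)^1$ as $({\rm can}_{X,Y})_\circ\circ T_\xi(\varphi^1)\circ\zeta^\circ$, so $T_\xi$ is in fact \emph{algebraic}, a fortiori admissible.

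For the minimality statement I would argue directly, and here neither BC nor monotonicity of the competitor is needed. The one identity to record first is $\varphi^1=|\varphi|^\circ\circ\varepsilon_1$, checked by evaluating both sides. Let $\hat T$ be any admissible, left-op-whiskering, monotone family with $\Xi(\hat T)=\xi$. Left-op-whiskering, applied with inner relation $\varepsilon_1$ and whiskering map $|\varphi|$, gives $\hat T(\varphi^1)=(T|\varphi|)^\circ\circ\hat T\varepsilon_1$, while $\Xi(\hat T)=\xi$ together with Remark~\ref{rules}(1) yields $\hat T\varepsilon_1\circ\zeta^\circ=\xi^\circ\circ\varepsilon_1$. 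Substituting these into the admissibility inequality for $\hat T$ gives
$$(\hat T\varphi)^1\;\geq\;({\rm can}_{X,Y})_\circ\circ\hat T(\varphi^1)\circ\zeta^\circ=({\rm can}_{X,Y})_\circ\circ(T|\varphi|)^\circ\circ\xi^\circ\circ\varepsilon_1=(T_\xi\varphi)^1.$$
Since $(-)^1$ merely reindexes a relation without changing its values, it reflects the order, so $T_\xi\varphi\leq\hat T\varphi$ for every $\varphi$, which is the asserted minimality.

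I expect the genuine obstacle to be the left-op-whiskering verification for $T_\xi$: one must set up the correct pullback square in $\Set$, keep careful track of the canonical identifications ($1\times\sV\cong\sV$, and $X\times Z$ as the apex over $Y$) and of the passage between $(\dagger)$ and its element-free rendering through $(-)^1$ and $({\rm can}_{X,Y})_\circ$, and only then does BC deliver the required lift. By contrast, once left-op-whiskering and $\Xi(T_\xi)=\xi$ are available, both the admissibility of $T_\xi$ and the minimality inequality are short formal consequences of Remark~\ref{rules} and the definitions.
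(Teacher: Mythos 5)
Your proposal is correct and follows essentially the same route as the paper's proof: left-op-whiskering of $T_{\xi}$ via Beck--Chevalley, algebraicity (hence admissibility) from the identity $T_{\xi}\varepsilon_1\circ\zeta^{\circ}=\xi^{\circ}\circ\varepsilon_1$ together with $\varphi^1=|\varphi|^{\circ}\circ\varepsilon_1$, and minimality by substituting the left-op-whiskering identity and $\Xi(\hat T)=\xi$ into the admissibility inequality. The only differences are cosmetic: you carry out the BC step elementwise (lifting witnesses through the pullback of $\pi_2$ along $h$) where the paper cites the element-free identity $({\rm can}_{X,Z})_{\circ}\circ(T(1\times h))^{\circ}=(1_{TX}\times Th)^{\circ}\circ({\rm can}_{X,Y})_{\circ}$ from the Clementino--Tholen reference, and you explicitly check monotonicity of $T_{\xi}$ and $\Xi(T_{\xi})=\xi$, which the paper leaves to the reader respectively to the computation already done in Theorem \ref{comparison theorem}.
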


 \begin{proof}
 First we verify that $T_{\xi}$ is left-op-whiskering, so that it satisfies condition (0) of Proposition \ref{corr}. Indeed, for $\phi:X\rto Y$ and $h:Z\to Y$, with $|h^{\circ}\circ \phi|=|\phi|\cdot(1_X\times h)$ one obtains
 $$(T_{\xi}(h^{\circ}\circ\phi))^1=({\rm can}_{X,Z})_{\circ}\circ (T|h^{\circ}\circ\phi|)^{\circ}\circ\xi^{\circ}\circ\varepsilon_1=({\rm can}_{X,Z})_{\circ}\circ(T(1\times h))^{\circ}\circ(T|\phi|)^{\circ}\circ\xi_{\circ}\circ\varepsilon_1.$$
 Since the satisfaction of BC by $T$ forces
 $$({\rm can}_{X,Z})_{\circ}\circ (T(1\times h))^{\circ}=(1_{TX}\times Th)^{\circ}\circ({\rm can}_{X,Y})_{\circ}$$
 (see Proposition 1.4.3 of \cite{ClementinoTholen}), the previous identity gives
 $(T_{\xi}(h^{\circ}\circ\phi))^1=((Th)^{\circ}\circ T_{\xi}\phi)^1$, as desired.

 For admissibility of $T_{\xi}$, first an easy inspection shows $\xi^{\circ}\circ\varepsilon_1=T_{\xi}\varepsilon_1\circ\zeta^{\circ}$.
 Since $T_{\xi}$ is left-op-whiskering, this identity and $\phi^1=|\phi|^{\circ}\circ\varepsilon_1$ in fact confirm even its algebraicity:
$$(T_{\xi}\phi)^1=({\rm can}_{X,Y})_{\circ}\circ(T|\phi|)^{\circ}\circ T_{\xi}\varepsilon_1\circ\zeta^{\circ}=
({\rm can}_{X,Y})_{\circ}\circ T_{\xi}(|\phi|^{\circ}\circ\varepsilon_1)\circ\zeta^{\circ}=
({\rm can}_{X,Y})_{\circ}\circ T_{\xi}(\phi^1)\circ\zeta^{\circ}.$$

 For an arbitrary admissible, left-op-whiskering and monotone $\hat{T}$ with $\Xi(\hat{T})=\xi$, we first use the left-op-whiskering property to obtain
 $\hat{T}(\phi^1)=(T|\phi|)^{\circ}\circ \hat{T}\varepsilon_1$ and then
 $$\overleftarrow{\hat{T}(\phi^1)\circ\zeta^{\circ}}=
 \zeta_!\cdot\overleftarrow{\hat{T}(\phi^1)}=\zeta_!\cdot \overleftarrow{\hat{T}\varepsilon_1}\cdot T|\phi|
 =\xi\cdot T|\phi|.$$
 Consequently, the admissibility of $\hat{T}$ gives the desired inequality
 $$(T_{\xi}\phi)^1=({\rm can}_{X,Y})_{\circ}\circ(T|\phi|)^{\circ}\circ\xi^{\circ}\circ\varepsilon_1
 =({\rm can}_{X,Y})_{\circ}\circ (\overleftarrow{\hat{T}(\phi^1)\circ\zeta^{\circ}})^{\circ} \circ\varepsilon_1
 =({\rm can}_{X,Y})_{\circ}\circ\hat{T}(\phi^1)\circ\zeta^{\circ}\leq(\hat{T}\phi)^1,$$
 which confirms the minimality of $T_{\xi}$.
  \end{proof}


\end{document}